\documentclass[11pt,letterpaper,reqno]{amsart}
\usepackage[margin=1.25in]{geometry}
\usepackage[T1]{fontenc}
\usepackage[utf8]{inputenc}
\usepackage{lmodern}
\usepackage[expansion=false]{microtype}
\usepackage{amsmath,amssymb,amsthm,mathtools,mathrsfs}
\usepackage{enumitem,needspace}
\usepackage[hidelinks]{hyperref}
\numberwithin{equation}{section}
\allowdisplaybreaks[2]
\setlist[enumerate]{leftmargin=2.3em,label=\textup{(\roman*)}}
\newtheorem{theorem}{Theorem}[section]
\newtheorem{proposition}[theorem]{Proposition}
\newtheorem{lemma}[theorem]{Lemma}
\newtheorem{corollary}[theorem]{Corollary}
\theoremstyle{remark}
\newtheorem{remark}[theorem]{Remark}
\newcommand{\D}{\mathbb D}
\newcommand{\C}{\mathbb C}
\newcommand{\R}{\mathbb R}

\newcommand{\Z}{\mathbb Z}
\newcommand{\T}{\mathbb T}
\newcommand{\E}{\mathbb E}
\newcommand{\Prob}{\mathbb P}
\newcommand{\Hol}{\mathcal H(\D)}
\newcommand{\Conf}{\mathcal N(\D)}
\newcommand{\Law}{\mathcal L}
\newcommand{\ind}{\mathbf 1}
\newcommand{\X}{\mathfrak X}
\newcommand{\norm}[1]{\lVert#1\rVert}
\DeclareMathOperator{\Var}{Var}

\DeclareMathOperator{\Aut}{Aut}
\DeclareMathOperator{\Log}{Log}
\DeclareMathOperator{\Arg}{Arg}
\DeclareMathOperator{\supp}{supp}
\DeclareMathOperator{\Haar}{Haar}
\DeclareMathOperator{\Thin}{Thin}
\newcommand{\dm}{\,dm}
\newcommand{\ip}[2]{\langle #1,#2\rangle}
\hypersetup{
 pdftitle={Canonical analytic realizations of hyperbolic determinantal processes},
 pdfauthor={Xiang Fang, Feng Guo, Shengzhao Hou, Qi Zhou},
 pdfsubject={Random analytic functions, intrinsic reconstruction, and Moebius covariance},
 pdfkeywords={Bergman determinantal point process, random analytic function, Blaschke product, Barnes G-function}
}
\title[Canonical analytic realizations]
{Canonical analytic realizations of hyperbolic determinantal processes}
\author{Xiang Fang}
\address{
Department of Applied Mathematics,
National Yang Ming Chiao Tung University,
Hsinchu, Taiwan
}
\email{xfang@nycu.edu.tw}

\author{Feng Guo}
\address{
School of Mathematics, South China University of Technology, Guangzhou 510640, P. R. China
}
\email{70207994@nuaa.edu.cn}

\author{Shengzhao Hou}
\address{
School of Mathematical Sciences,
Soochow University,
Suzhou 215006, P. R. China
}
\email{shou@suda.edu.cn}

\author{Qi Zhou}
\address{
School of Mathematical Sciences,
Soochow University,
Suzhou 215006, P. R. China
}
\email{zhouqi@suda.edu.cn}
\date{}
\keywords{Bergman determinantal point process, random analytic function,
Blaschke product, Barnes $G$-function, M\"obius covariance}
\subjclass[2020]{60G55, 60G57, 30B20, 15B52}

\begin{document}
\begin{abstract}
Krishnapur asked whether the invariant hyperbolic determinantal point
processes on the disk admit a random analytic zero-set interpretation at
noninteger parameters. We construct such a realization for every positive
real parameter as the full compact-open limit in distribution of normalized
finite Blaschke products. The zeros determine the modulus and normalized
analytic shape, leaving one independent uniform phase. We prove exact
M\"obius covariance and classify all realizations with this covariance and
square-integrable logarithmic modulus at the origin: they are precisely
independent positive random multiples of the canonical function. Within this covariant class, matching the canonical logarithmic mean and
variance uniquely determines the canonical function law.  The family
is weakly continuous in the parameter and agrees at positive integers with
determinants of matrix-valued Gaussian power series. An explicit Barnes
$G$-function Mellin transform determines the basepoint normalization.
\end{abstract}
\maketitle

\section{Introduction}\label{sec:introduction}

Krishnapur asked whether the invariant hyperbolic determinantal point
processes on the disk admit a random analytic zero-set interpretation at
noninteger parameters~\cite[Section~8, Question~1]{Krishnapur2009}.
These processes form a family $H_\alpha$, $\alpha>0$, whose parameter is
the intensity relative to hyperbolic area. 
At $\alpha=1$, Peres and
Vir\'ag~\cite{PV2005} identified $H_1$ as the zeros of the power series
with independent standard complex Gaussian coefficients. Krishnapur
realized $H_m$ for each positive integer $m$ by the determinant of an
$m\times m$ matrix-valued Gaussian power series, and
also established the corresponding function-level limit from truncated
Haar unitary matrices~\cite{Krishnapur2009}. The determinantal law
exists for every positive real parameter, whereas the matrix realization
uses that parameter as its dimension. This gap is also discussed
in~\cite[Section~4.3.11]{HKPV2009}.

We answer Krishnapur's question for every $\alpha>0$. The realizing random
analytic function $F_\alpha$ is obtained as the full compact-open limit
in distribution of normalized finite Blaschke products. The laws
$\Law(F_\alpha)$ vary continuously with $\alpha$ and agree, at the positive
integers, with the complete function laws of the Gaussian determinants.
Thus the construction extends the integer models at the level of random
analytic functions as well as their zero processes.

The function law is a substantial part of the conclusion. Prescribing an
analytic zero divisor leaves an arbitrary zero-free analytic factor.
Here the zeros determine both the modulus and the normalized analytic
shape; conditional on the zeros, the only randomness is one independent
uniform phase. Moreover, within the class with the prescribed M\"obius
covariance and square-integrable logarithmic modulus at the origin, every
realization of $H_\alpha$ is an independent positive random multiple of
$F_\alpha$. Two logarithmic moments select the canonical law. This
characterization explains the normalization without prescribing an entire
basepoint distribution or a conditional phase law.

The family itself has an invariant characterization. Krishnapur classified
the invariant holomorphic projection kernels with a radial reference
measure~\cite[Chapter~3, Theorem~3.0.5]{Krishnapur2006}. In
Appendix~\ref{sec:invariant-kernels} we give the corresponding statement
for an arbitrary positive Radon reference measure. Allowing positive
contractions gives precisely independent thinnings of the processes
$H_\alpha$; the projection case is exactly the unthinned family. This
formulation extends Krishnapur's kernel argument and specifies the class
realized in this paper. Our analytic realization theorems concern this
projection family.

\subsection{The model and three main results}\label{subsec:main-results}

Let $\D=\{z\in\C:|z|<1\}$, let $dm$ be planar Lebesgue measure, and put
$D_r=\{z:|z|<r\}$. Equip the space $\Hol$ of analytic functions on $\D$
with the compact-open topology and the space $\Conf$ of locally finite
integer-valued Radon measures with the vague topology. For
$f\not\equiv0$, the zero divisor $Z(f)\in\Conf$ counts multiplicities.

For $\alpha>0$, write
\begin{equation}\label{eq:model}
 d\mu_\alpha(z)=\frac{\alpha}{\pi}(1-|z|^2)^{\alpha-1}dm(z),
 \qquad K_\alpha(z,w)=(1-z\bar w)^{-\alpha-1}.
\end{equation}
The kernel uses the analytic branch equal to $1$ at $z\bar w=0$.
It projects $L^2(\mu_\alpha)$ onto its analytic subspace, and $H_\alpha$
denotes its determinantal law. Its intensity is
\begin{equation}\label{eq:intensity}
 d\nu_\alpha(z)=\rho_\alpha(z)dm(z),\qquad
 \rho_\alpha(z)=\frac{\alpha}{\pi(1-|z|^2)^2}.
\end{equation}
Let $\Xi_{N,\alpha}$ be the rank-$N$ projection process with kernel
\begin{equation}\label{eq:finite-kernel}
 K_{N,\alpha}(z,w)=\sum_{n=0}^{N-1}\binom{n+\alpha}{n}(z\bar w)^n
 \quad\text{relative to }\mu_\alpha.
\end{equation}
It has exactly $N$ points almost surely. Define
\begin{equation}\label{eq:finite-function}
 F_{N,\alpha}(z)=N^{\alpha/2}
       \prod_{x\in\Xi_{N,\alpha}}\frac{z-x}{1-z\bar x}.
\end{equation}
The phase is the one prescribed by this finite product.

Let $G$ be the Barnes function, normalized by $G(1)=1$ and
$G(z+1)=\Gamma(z)G(z)$, and set
\begin{equation}\label{eq:barnes-function}
 M_\alpha(u)=\frac{G(1+\alpha+u)}{G(1+\alpha)G(1+u)},
 \quad \operatorname{Re}u>-1,\qquad
 c_\alpha=\frac12(\log M_\alpha)'(0).
\end{equation}
Complex powers of positive random variables use their real logarithm.

\begin{theorem}[Canonical realization and reconstruction]
\label{thm:realization}\label{thm:reconstruction}
For every $\alpha>0$, there is a random $F_\alpha\in\Hol$ with the
following properties.
\begin{enumerate}
\item Along the full sequence,
\begin{equation}\label{eq:full-limit}
 (F_{N,\alpha},\Xi_{N,\alpha})\Longrightarrow
 (F_\alpha,Z(F_\alpha))\quad\text{in }\Hol\times\Conf.
\end{equation}
Moreover, $F_\alpha(0)\ne0$ almost surely and $Z(F_\alpha)\sim H_\alpha$.
\item There are Borel maps $T_0:\Conf\to\R$, $P:\Conf\to\Hol$ and an
$H_\alpha$-full Borel set $\mathscr G_\alpha$ such that
$P_\xi(0)=1$ and $Z(P_\xi)=\xi$ on that set. On the coupling
$\Xi=Z(F_\alpha)$,
\begin{equation}\label{eq:intrinsic-identities}
 |F_\alpha(0)|=e^{c_\alpha-T_0(\Xi)},\qquad
 \frac{F_\alpha}{F_\alpha(0)}=P_\Xi
 \quad\text{almost surely}.
\end{equation}
The phase $U=F_\alpha(0)/|F_\alpha(0)|$ is Haar on $\T$ and independent
of $\Xi$. Conversely, for independent $\Xi\sim H_\alpha$ and
$U\sim\Haar(\T)$,
\begin{equation}\label{eq:sampling}
 Ue^{c_\alpha-T_0(\Xi)}P_\Xi\ \overset d=\ F_\alpha.
\end{equation}
\item The basepoint modulus satisfies
\begin{equation}\label{eq:main-barnes}
 \E|F_\alpha(0)|^{2u}=M_\alpha(u),\qquad\operatorname{Re}u>-1.
\end{equation}
This half-plane is maximal for absolute Mellin moments.
\end{enumerate}
\end{theorem}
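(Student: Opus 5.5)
The plan is to control $F_{N,\alpha}$ through its logarithm. Write $\log|F_{N,\alpha}(z)|=\frac{\alpha}{2}\log N+\sum_{x\in\Xi_{N,\alpha}}\log\bigl|\frac{z-x}{1-z\bar x}\bigr|$. The first step is an exact computation of the basepoint Mellin transform, $\E|F_{N,\alpha}(0)|^{2u}=N^{\alpha u}\E\prod_{x}|x|^{2u}$. Since $\Xi_{N,\alpha}$ is a radial projection process, Kostlan's theorem makes the moduli $\{|x|^2\}$ independent Beta$(n+1,\alpha)$ variables, $n=0,\dots,N-1$. Hence
\[
\E|F_{N,\alpha}(0)|^{2u}=N^{\alpha u}\prod_{n=0}^{N-1}\frac{\Gamma(n+1+u)\Gamma(n+1+\alpha)}{\Gamma(n+1)\Gamma(n+1+\alpha+u)}.
\]
Telescoping with $G(z+1)=\Gamma(z)G(z)$, this equals $N^{\alpha u}M_\alpha(u)\,\frac{G(N+1)G(N+1+\alpha+u)}{G(N+1+u)G(N+1+\alpha)}$. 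Barnes asymptotics, $\log G(z+a)-\log G(z+b)\sim (a-b)z\log z+\cdots$, show that the last ratio behaves like $N^{-\alpha u}(1+o(1))$ locally uniformly on $\operatorname{Re}u>-1$. This gives $\E|F_{N,\alpha}(0)|^{2u}\to M_\alpha(u)$. Convergence of Mellin transforms on a vertical strip yields convergence in law of $\log|F_{N,\alpha}(0)|$, uniform integrability of its moments, and \eqref{eq:main-barnes} once the limit is identified. Maximality of the half-plane follows because $G(1+u)$ has a zero at $u=-1$, so $M_\alpha$ has a pole there. The same Kostlan structure shows $\log|F_{N,\alpha}(0)|$ is a centered sum of independent terms plus a deterministic shift. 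This produces $T_0$: define $T_0(\xi)$ as a centered renormalized limit $\lim_{r\uparrow1}\bigl(\sum_{|x|<r}\log|x|^{-1}-\int_{D_r}\log|x|^{-1}d\nu_\alpha\bigr)$, taken along a fixed sequence, which converges $H_\alpha$-a.s. by an $L^2$ martingale/variance argument. The constant $c_\alpha=\frac12(\log M_\alpha)'(0)$ is exactly the limit of $\E\log|F_{N,\alpha}(0)|$.

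The second step is the normalized shape. For each $z$, $\log\frac{F_{N,\alpha}(z)}{F_{N,\alpha}(0)}=\sum_x\log\frac{(z-x)/(1-z\bar x)}{-x}$ with principal branches near $z=0$. I would define $P_\xi$ as the limit of compensated products
\[
P_\xi(z)=\lim_{r\uparrow1}\exp\Bigl(\sum_{|x|<r}\log\tfrac{1-z/x}{1-z\bar x}-\int_{D_r}\log\tfrac{1-z/x}{1-z\bar x}\,d\nu_\alpha(x)\Bigr).
\]
The compensator vanishes identically for $|z|$ small by rotation invariance of $\nu_\alpha$, since every Taylor coefficient in $z$ has angular dependence $x^{-k}$ or $\bar x^{k}$. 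Consequently $P_\xi$ is a genuinely uncompensated symmetric (radially ordered) limit. Convergence in $L^2$ locally uniformly follows from the determinantal variance bound $\Var\sum f(x)\le\int|f|^2d\nu_\alpha$ applied to the Taylor coefficients; analyticity propagates from a small disk to $\D$ via the factors away from $z$. Zeros are clearly $\xi$ and $P_\xi(0)=1$.

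The third step is joint tightness and the full limit. Write $F_{N,\alpha}=F_{N,\alpha}(0)\cdot P^{(N)}$, where $P^{(N)}$ is the finite normalized product. The coupling $\Xi_{N,\alpha}\to H_\alpha$, which holds because the kernels converge locally uniformly, should be upgraded to convergence of the pair $(\log|F_{N,\alpha}(0)|-\frac\alpha2\log N+\text{shift},P^{(N)})$ to $(c_\alpha-T_0,P_\Xi)$. The tool is uniform-in-$N$ $L^2$ bounds on the tail sums from the finite-kernel variance inequality, which let one truncate at radius $r$ uniformly. The phase is handled separately: $\Xi_{N,\alpha}$ is rotation invariant and $F_{N,\alpha}(e^{i\theta}z)$ relates to $F_{N,\alpha}(z)$ by a factor $e^{iN\theta}$. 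Combining this with the a.s. convergence argument shows the limiting phase is Haar and independent of $\Xi$; the weakest point is that the finite phase $\prod(-x/|x|)$ is not Haar for finite $N$, only asymptotically equidistributed, which requires a characteristic-function estimate $\E\prod(x/|x|)^k\to0$ for $k\ne0$ that follows from rotation invariance exactly (it is zero). Then \eqref{eq:full-limit}, \eqref{eq:intrinsic-identities} and \eqref{eq:sampling} follow. Part (i)'s $F_\alpha(0)\neq0$ follows from finiteness of $\log|F_\alpha(0)|$. Measurability of $T_0,P$ is automatic as pointwise limits of Borel functionals on a full set $\mathscr G_\alpha$ where limits exist.

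I expect the main obstacle to be step three: converting convergence of $\Xi_{N,\alpha}$ into convergence of the renormalized \emph{infinite-range} logarithmic sums jointly with the point process. This needs uniform control of the far-out (near $|x|=1$) contributions both at finite $N$ and in the limit, since the points near the boundary at finite $N$ do not converge to those of $H_\alpha$ in any strong sense. I would first try to obtain this using the exact Kostlan independence for the modulus part and the determinantal variance bound for the shape part.
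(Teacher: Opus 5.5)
Your handling of the modulus and of the normalized shape works, and it takes a different route from the paper. The paper first proves tightness through the bounded statistic of Proposition~\ref{prop:finite-tightness}, then identifies cluster limits through logarithmic jets and the Taylor recursion. You instead truncate the product at $|x|=R_0$ and pass the interior factor to the limit by continuous mapping. That factor is continuous into $\Hol$ at configurations with no point at $0$ or on $|x|=R_0$, which holds almost surely under $H_\alpha$. You then control the exterior exponential uniformly in $N$ by the variance bound, exactly as in Lemma~\ref{lem:analytic-tail}. This gives tightness of $F_{N,\alpha}=F_{N,\alpha}(0)P^{(N)}$ and the product formula of Proposition~\ref{prop:canonical} at once.

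The genuine gap is the phase. You must show that in every cluster limit $U$ is independent of $\Xi$, i.e.\ $\E[U^k b(\Xi)]=0$ for $k\ne0$ and bounded continuous $b$. Your argument addresses only the marginal. In fact $\E U_N^k=0$ holds exactly for every $N$: rotating the configuration by $t$ multiplies $U_N=\prod_x(-x/|x|)$ by $e^{iNt}$, so $U_N$ is exactly Haar, contrary to your remark. But $U_N$ is a deterministic function of $\Xi_N$, so this says nothing about the joint law. There is no almost sure coupling across $N$ to invoke. A fixed rotation only relates $(U_N,\Xi_N)$ to $(e^{iN\theta}U_N,e^{i\theta}\Xi_N)$, in which the configuration moves and $e^{iN\theta}$ has no limit.

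The missing device is to let the angle vanish with $N$. Take $t=\theta/N$ in $(F_{N,\alpha},\Xi_{N,\alpha})\overset d=(e^{iNt}F_{N,\alpha}(e^{-it}\,\cdot),e^{it}\Xi_{N,\alpha})$. The map $(s,f,\xi)\mapsto(e^{i\theta}f(e^{-i\theta s}\,\cdot),e^{i\theta s}\xi)$ is jointly continuous at $s=0$. The continuous mapping theorem applied to $(1/N,F_{N,\alpha},\Xi_{N,\alpha})$ therefore gives $(F,\Xi)\overset d=(e^{i\theta}F,\Xi)$ for every $\theta$ and every cluster limit. Averaging over $\theta$ then makes $U$ Haar and independent of $\Xi$. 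Without this step the cluster laws are not identified, so the full-sequence convergence in (i), the independence in (ii), and \eqref{eq:sampling} remain unproved.

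Two smaller statements also need correcting. First, the compensator $\int_{D_r}\Log\frac{1-z/x}{1-z\bar x}\,d\nu_\alpha(x)$ does not vanish for any $z\ne0$. The expansion in powers of $z$ is valid only for $|x|>|z|$. For $r>|z|$ its real part equals $\int_{D_r}(q_0-q_z)\,d\nu_\alpha=I_\alpha(z)=-\frac\alpha2\log(1-|z|^2)$, contributed by the points with $|x|<|z|$. So your compensated expression is not even analytic. The uncompensated radial product is still the right object, but for a different reason. Only the exterior annular sums over $|x|>R_0>|z|$ need $L^2$ control, and those are centered by radial symmetry; the interior part is a finite sum. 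Note also that $\bar x^k-x^{-k}\notin L^2(\nu_\alpha)$ near $0$, so the variance bound cannot be applied to the Taylor-coefficient symbols on all of $\D$.

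Second, your Barnes ratio is inverted. The finite Mellin transform equals $M_\alpha(u)N^{\alpha u}G(N+1+u)G(N+1+\alpha)/\bigl(G(N+1)G(N+1+\alpha+u)\bigr)$, and it is this ratio that behaves like $N^{-\alpha u}$.
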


The maps $T_0$ and $P$ admit the fixed radial representations in
Proposition~\ref{prop:canonical}. Thus~\eqref{eq:intrinsic-identities}
identifies measurable functions of the actual zero configuration in the
joint limit. The independence of the remaining phase is a conclusion
about the finite-product limit.

To state the characterization, write, for $\phi\in\Aut(\D)$,
\begin{equation}\label{eq:automorphism}
 \phi(z)=e^{i\vartheta}\frac{a-z}{1-\bar az},\qquad a=\phi^{-1}(0),
\end{equation}
and define the positive-at-zero multiplier
\begin{equation}\label{eq:multiplier}
 C_{\alpha,\phi}(z)=(1-|a|^2)^{-\alpha/2}
                 \exp\{\alpha\Log(1-\bar az)\},
\end{equation}
where $\Log(1-\bar az)$ vanishes at zero. Put
\begin{equation}\label{eq:log-variance-normalization}
 v_\alpha=\frac14(\log M_\alpha)''(0)
          =\Var(\log|F_\alpha(0)|).
\end{equation}

\begin{theorem}[M\"obius covariance and classification of realizations]
\label{thm:covariance}\label{thm:geometric}
Fix $\alpha>0$.
\begin{enumerate}
\item For every $\phi\in\Aut(\D)$,
\begin{equation}\label{eq:main-covariance}
 F_\alpha\circ\phi\ \overset d=\ C_{\alpha,\phi}F_\alpha
 \quad\text{in }\Hol.
\end{equation}
\item Let $F\in\Hol$ be random, with $F(0)\ne0$ almost surely,
$Z(F)\sim H_\alpha$, and $\E(\log|F(0)|)^2<\infty$. 
Then
$F\circ\phi\overset d=C_{\alpha,\phi}F$ for every
$\phi\in\Aut(\D)$ if and only if
\begin{equation}\label{eq:independent-scale-classification}
 F\overset d=S F_\alpha,
\end{equation}
where $S>0$ is independent of $F_\alpha$.
The law of $S$ is uniquely determined by the law of $F$.
\item Among the covariant realizations in \textup{(ii)}, $F_\alpha$ is
the unique law satisfying
\begin{equation}\label{eq:geometric-normalization}
 \E\log|F(0)|=c_\alpha,\qquad
 \Var(\log|F(0)|)=v_\alpha.
\end{equation}
\end{enumerate}
\end{theorem}

In particular, every realization in part~(ii) has logarithmic variance
at least $v_\alpha$, with equality precisely for deterministic positive
multiples of $F_\alpha$. No phase assumption is needed: the composition
law of the multipliers forces circular symmetry. The proof identifies
the independent scale and Haar phase jointly with the actual zero
configuration.

For reference, covariance gives the point laws and second moments
\begin{align}
 (1-|b|^2)^{\alpha/2}F_\alpha(b)&\overset d=F_\alpha(0),
       &&b\in\D,\label{eq:main-point-law}\\
 \E[F_\alpha(z)\overline{F_\alpha(w)}]
       &=\Gamma(1+\alpha)(1-z\bar w)^{-\alpha},
       &&z,w\in\D.\label{eq:main-covariance-kernel}
\end{align}
These are proved in Corollary~\ref{cor:point-laws}. The second-moment
kernel alone does not determine the function law.

\Needspace{10\baselineskip}
\begin{theorem}[Continuity and the integer models]
\label{thm:continuity}\label{thm:integer}
\begin{enumerate}
\item If $\alpha_j\to\alpha>0$, then
$F_{\alpha_j}\Longrightarrow F_\alpha$ in $\Hol$.
\item Let $m\ge1$ be an integer, and let $G_0,G_1,\ldots$ be independent
$m\times m$ matrices with independent standard complex Gaussian entries,
each entry having density $\pi^{-1}e^{-|z|^2}$. Then
\begin{equation}\label{eq:main-integer}
 F_m\ \overset d=\ \det\Bigl(\sum_{n=0}^\infty G_nz^n\Bigr)
 \quad\text{in }\Hol.
\end{equation}
\end{enumerate}
\end{theorem}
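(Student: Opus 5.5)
Part (i) is a continuity argument, and I will build it on the sampling representation \eqref{eq:sampling}. The first step is to show $H_{\alpha_j}\Rightarrow H_\alpha$ in $\Conf$. For determinantal projection processes, locally uniform convergence of the kernels $K_{\alpha_j}\to K_\alpha$ together with convergence of the reference measures, i.e.\ of the densities $(1-|z|^2)^{\alpha_j-1}$ on compacts, gives convergence of all correlation functions on compacts, uniformly bounded by Hadamard's inequality. That yields weak convergence of the point processes.

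Next I need tightness of $F_{\alpha_j}$ in $\Hol$. By \eqref{eq:main-covariance-kernel}, $\E|F_{\alpha_j}(z)|^2=\Gamma(1+\alpha_j)(1-|z|^2)^{-\alpha_j}$ is bounded on compacts uniformly in $j$. Subharmonicity then bounds $\E\sup_{D_r}|F_{\alpha_j}|^2$, which gives tightness by Montel.

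Finally I identify every subsequential limit $F$. I would check three things:
\begin{itemize}
\item[(a)] $\E|F(0)|^{2u}=M_\alpha(u)$, using continuity of $M_{\alpha_j}(u)\to M_\alpha(u)$ and uniform integrability from the moments at a slightly larger $u$.
\item[(b)] The covariance \eqref{eq:main-covariance} passes to the limit, because $C_{\alpha_j,\phi}\to C_{\alpha,\phi}$ locally uniformly.
\item[(c)] $Z(F)\sim H_\alpha$.
\end{itemize}
Item (c) is the delicate step. Zeros are continuous under compact-open convergence, by Hurwitz, provided $F\not\equiv0$, and $F(0)\ne0$ a.s.\ follows from (a) with $u$ near $-1/2$. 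Hence $(F_{\alpha_j},Z(F_{\alpha_j}))\Rightarrow(F,Z(F))$ along the subsequence, and so $Z(F)\sim H_\alpha$.

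With (a)--(c) in hand, Theorem~\ref{thm:covariance}(ii) applies, since $\E(\log|F(0)|)^2<\infty$ by uniform integrability of the log-moments, which come from $M_{\alpha_j}$ derivatives. It gives $F\overset d= SF_\alpha$. The mean and variance of $\log|F(0)|$ are limits of $c_{\alpha_j}$ and $v_{\alpha_j}$, and these are $c_\alpha$ and $v_\alpha$ by analyticity of $\log M$ in $(\alpha,u)$ near $u=0$. Part (iii) of that theorem then forces $F\overset d=F_\alpha$. The step I expect to need care is the uniform integrability of $(\log|F_{\alpha_j}(0)|)^2$. I would handle it via the Mellin transform being finite for $u\in(-1/2,1/2)$ uniformly in $j$, which bounds exponential moments of $|\log|F(0)||$.

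For part (ii), let $D(z)=\det\sum G_nz^n$. Krishnapur's theorem~\cite{Krishnapur2009} gives $Z(D)\sim H_m$, and $D(0)=\det G_0\ne0$ a.s. For the covariance I use the invariance of the matrix Gaussian power series. Take $\phi$ as in \eqref{eq:automorphism}. The scalar Gaussian analytic function $f(z)=\sum g_nz^n$ has covariance $(1-z\bar w)^{-1}$, so it satisfies $f\circ\phi\overset d=(1-|a|^2)^{-1/2}(1-\bar az)f$ up to a unimodular constant, which is absorbed by rotation invariance of the Gaussians. Apply this entrywise to the matrix series $A(z)=\sum G_nz^n$, using that the entries are independent. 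This gives $A\circ\phi\overset d=(1-|a|^2)^{-1/2}(1-\bar az)A$ as matrix-valued functions, and taking determinants gives the multiplier $C_{m,\phi}$ exactly.

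It remains to match the normalization. $\log|\det G_0|$ has finite variance, since $|\det G_0|^2$ is a product of independent Gamma$(k)$ variables, $k=1,\dots,m$. So Theorem~\ref{thm:covariance}(iii) reduces the claim to checking \eqref{eq:geometric-normalization}. I would instead verify the whole Mellin transform. We have $\E|\det G_0|^{2u}=\prod_{k=1}^m\Gamma(k+u)/\Gamma(k)$, and $G(1+m+u)/G(1+u)=\prod_{k=1}^{m}\Gamma(k+u)$ and $G(1+m)=\prod_{k=1}^m\Gamma(k)$. This equals $M_m(u)$, so its log-derivatives at $0$ give $c_m$ and $v_m$. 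Hence $D\overset d=F_m$.
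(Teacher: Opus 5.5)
Your proof is correct. Part (ii) follows the paper's argument: you get covariance by matching the covariances of the proper Gaussian entries via \eqref{eq:kernel-transformation}, you compute the basepoint Mellin transform from the independent gamma factors of $|\det G_0|^2$, and you conclude with Theorem~\ref{thm:geometric}(iii). Your hedge ``up to a unimodular constant'' is unnecessary: proper centered Gaussian laws are determined by their covariance, and $f\circ\phi$ and $C_{1,\phi}f$ have exactly the same covariance.

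Part (i) takes a genuinely different route. You get tightness from the second moments of Corollary~\ref{cor:point-laws}. You then identify every subsequential limit through the uniqueness theorem: the basepoint Mellin transform, the covariance identity and the divisor law all pass to the limit, and Theorem~\ref{thm:geometric}(ii)--(iii) fixes the law.

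The paper never invokes covariance or the classification here. It bounds $\E\log^+|F_\beta|$ uniformly through the canonical modulus formula. It then passes finitely many Green statistics through the parameter limit, using common compactly supported approximations of $q_{z_i}$ in $L^2(\nu_1)$. From this it reads off $|F(z)|=\exp\{d_\alpha(z)-T_{z,\alpha}(\Xi)\}$, hence $F=UA_\alpha(\Xi)$. Finally, it recovers the Haar phase from the rotation invariance of each pair $(F_{\alpha_j},Z(F_{\alpha_j}))$, which survives the limit.

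Your version is shorter and treats (i) and (ii) by a single uniqueness mechanism. However, it rests on all of Section~\ref{sec:geometric}: Green transport, the mixing lemma and the forced-phase lemma. Alternatively, it rests on Appendix~\ref{sec:direct-normalization} if you only invoke the normalized uniqueness. The paper's version uses only the Green identities from the construction, and it yields joint convergence together with the Green coordinates.

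Two small points to tidy up.
\begin{enumerate}
\item For negative $u$ in (a), first obtain $F(0)\ne0$ from Fatou, since $x\mapsto|x|^{2u}$ is lower semicontinuous into $[0,\infty]$. Then obtain equality by truncation together with the uniform $L^p$ bound at $pu$.
\item Once (a) holds for real $u$ near $0$, the law of $|F(0)|^2$ is already that of $X_\alpha$. The logarithmic mean, variance and square-integrability then follow directly, without a separate uniform-integrability argument for $(\log|F_{\alpha_j}(0)|)^2$.
\end{enumerate}
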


Part~(i) asserts weak continuity of the function laws. Part~(ii)
records consistency with the known integer models. Krishnapur's proof
of~\cite[Theorem~4]{Krishnapur2009} already establishes the
corresponding compact-open function limit at positive integer
parameters. Our proof below gives an alternative identification through
Theorem~\ref{thm:geometric}(iii). At $m=1$ this is the Gaussian power series of
Peres and Vir\'ag.

\subsection{Proof mechanism and relation to earlier work}
\label{subsec:outline}\label{subsec:related}

The main issue in passing from finite configurations to functions is
control of the zero-free factor contributed by points approaching the
boundary. Vague convergence of the configurations does not control this
factor. Our construction starts with the bounded statistic associated
with $1-\left|(z-w)/(1-z\bar w)\right|^2$. Its variance is at most
$\alpha$, and radial beta identities control its mean. The subharmonic
mean inequality then gives compact-open tightness of the finite functions for every $\alpha>0$.

We identify a joint cluster limit through its logarithmic modulus at the
origin and its logarithmic jets. The centered symbol $-\log|w|$ is in
$L^2(\nu_\alpha)$. For a jet, the symbol $\bar w^k-w^{-k}$ has an
$L^2$ boundary tail; its singular interior contribution is controlled by
the probability of a point near zero. Common cutoffs retain the limiting
function and divisor together. The Taylor recursion consequently makes
the normalized analytic shape a measurable function of that divisor.

The prescribed finite phase requires a separate argument. Rotating the
finite configuration through an angle $\theta/N$ gives
\[
 (F_{N,\alpha},\Xi_{N,\alpha})\overset d=
 \bigl(e^{i\theta}F_{N,\alpha}(e^{-i\theta/N}\,\cdot),
                         e^{i\theta/N}\Xi_{N,\alpha}\bigr).
\]
The changes in the argument and configuration disappear in the limit,
while the phase $e^{i\theta}$ remains. Haar averaging identifies the
conditional phase and hence every joint cluster law. This proves
full-sequence convergence. The scalar normalization is computed from the
independent beta radii, using the radial decomposition
in~\cite[Theorem~26]{HKPV2006}.

The geometric characterization compares a candidate with the canonical
function on the same zeros. The logarithmic modulus of their quotient is
a stationary square-integrable harmonic field, hence a spatial constant.
Mixing of \(H_\alpha\) makes this constant independent of the zeros. The
projective multiplier identity forces the remaining conditional phase to
be Haar. Logarithmic mean and variance then remove exactly the independent
scale. Uniform Green estimates give parameter continuity, and the
characterization identifies the integer models.

Several antecedents are particularly close. Peres and
Vir\'ag~\cite[Theorem~6]{PV2005} showed that the hyperbolic Gaussian
analytic function is determined by its zeros up to a unimodular constant.
Their starting point is a given Gaussian analytic function, whereas here
we start from the weighted Bergman determinantal law and ask whether it
can be realized as the zero set of a natural random analytic function for
arbitrary \(\alpha>0\).

A complementary coefficient-level reconstruction in the planar GAF
setting was developed by Ghosh and
Peres~\cite[Theorem~1.5]{GhoshPeres2017}. There, regularized inverse
power sums of the zeros, combined with Newton identities, recover
normalized Taylor coefficients, again leaving an independent uniform
phase. Our logarithmic jets play an analogous role for the weighted
Bergman determinantal family, with the additional requirement that they
arise as limits of the prescribed finite Blaschke products.

Bufetov and Qiu~\cite[Theorem~1.4 and Eq.~(5)]{BQ2017} constructed
regularized multiplicative functionals of Blaschke products for weighted
Bergman processes and used them to describe Palm measures. For the
weighted Bergman process considered here, their single-center logarithmic
statistic is exactly \(-T_z\), with \(T_z\) defined in
Section~\ref{subsec:green-estimates}. This statistic and the regularized
change-of-measure framework are direct antecedents. The proof of
Theorem~\ref{thm:realization} proceeds directly from the finite ensembles
and the elementary determinantal variance bound; it does not invoke their
multiplicative-functional theorem. The additional analytic conclusions
are compact-open convergence of the prescribed finite products,
reconstruction of the normalized shape, and identification of the
normalization and conditional phase. The all-center Green field is
established after this construction and used for the geometric results.

Section~\ref{sec:transforms} gives a complementary characterization by
joint Mellin--Fourier transforms of the basepoint and logarithmic
derivatives. The tilted determinantal laws are defined independently of
the function. Their change of measure is a specialization of the
regularized multiplicative-functional framework; we derive it from the
finite ensembles and compute its Barnes normalizer. The transform
characterization is not used in the construction or geometric uniqueness.
The finite ensembles at integer parameters also have the truncated-unitary
interpretation of~\cite{ZS2000}, used in~\cite{Krishnapur2009};
Corollary~\ref{cor:unitary} recovers the corresponding function limit
from the present construction.

\subsection{Organization}\label{subsec:organization}

Section~\ref{sec:preliminaries} collects the finite-ensemble identities,
analytic coordinates, and scalar normalization.
Section~\ref{sec:construction} proves
Theorem~\ref{thm:realization} by bounded-statistic tightness, joint
logarithmic coordinates, and the vanishing-rotation argument. It then
establishes the radial product and all-center Green identities. Sections~\ref{sec:covariance} and~\ref{sec:geometric} prove
the covariance and classification parts of
Theorem~\ref{thm:geometric}. Section~\ref{sec:continuity-integer} proves
Theorem~\ref{thm:continuity}, and Section~\ref{sec:transforms} treats the
joint transforms. Appendix~\ref{sec:invariant-kernels} records the
invariant holomorphic-kernel classification with its proof and attribution.
Appendix~\ref{sec:direct-normalization} gives a direct Green-average
argument for the normalized uniqueness conclusion.

\section{Preliminaries}\label{sec:preliminaries}

We first collect the finite-ensemble identities, continuity facts, and
centered-statistic estimate needed for the function construction. We also
identify the scalar normalization here. Section~\ref{sec:construction} uses these inputs for the direct function
limit, and then proves the Green and analytic product identities.

\subsection{Finite ensembles and local point-process limits}
\label{subsec:finite-ensembles}

The normalized monomials in $L^2(\mu_\alpha)$ are
$e_{n,\alpha}(z)=\binom{n+\alpha}{n}^{1/2}z^n$, $n\ge0$.
They form an orthonormal basis of its analytic subspace: polar integration
gives
\[
 \int_\D |z|^{2n}d\mu_\alpha(z)=\alpha B(n+1,\alpha).
\]
Their complete and finite sums give~\eqref{eq:model}
and~\eqref{eq:finite-kernel}. In particular, the planar finite intensity
satisfies
\begin{equation}\label{eq:intensity-domination}
 0\le\rho_{N,\alpha}(z)\uparrow\rho_\alpha(z).
\end{equation}
The standard existence theorem for locally trace-class projection kernels
applies; see~\cite{HKPV2006}. All point processes used below are simple.
Because their intensities are absolutely continuous, they assign no points
to a fixed point or a fixed circle almost surely.

\begin{lemma}\label{lem:radial}
Let $\eta_n=\pi B(n+1,\alpha)$. The symmetric labeled density of
$\Xi_{N,\alpha}$ relative to $dm^N$ is
\begin{equation}\label{eq:finite-density}
 \frac{|\Delta(z_1,\ldots,z_N)|^2}{N!\prod_{n=0}^{N-1}\eta_n}
            \prod_{j=1}^N(1-|z_j|^2)^{\alpha-1}.
\end{equation}
Its unordered squared radii have the law of independent
$Y_j\sim\operatorname{Beta}(j,\alpha)$, $1\le j\le N$, after forgetting
the order. Consequently, with $X_{N,\alpha}=|F_{N,\alpha}(0)|^2$,
\begin{equation}\label{eq:radial-product}
 X_{N,\alpha}\overset d=N^\alpha\prod_{j=1}^NY_j.
\end{equation}
\end{lemma}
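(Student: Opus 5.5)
The plan is to use the standard projection-kernel identity for finite-rank determinantal processes, followed by the polar decomposition of Kostlan type from \cite[Theorem~26]{HKPV2006}. The kernel $K_{N,\alpha}$ relative to $\mu_\alpha$ has the orthonormal basis $e_{n,\alpha}$, $0\le n\le N-1$. Hence the joint density of the $N$ points relative to $\mu_\alpha^{\otimes N}$ is $\frac1{N!}|\det(e_{n-1,\alpha}(z_j))_{j,n}|^2$. We expand this as $\frac1{N!}\prod_n\binom{n+\alpha}{n}|\Delta|^2$. Next we convert to $dm^N$ using $d\mu_\alpha=\frac{\alpha}{\pi}(1-|z|^2)^{\alpha-1}dm$. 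The constant that appears is $\prod_{n=0}^{N-1}\binom{n+\alpha}{n}\frac{\alpha}{\pi}$. Since $\binom{n+\alpha}{n}\alpha B(n+1,\alpha)=1$, this equals $\prod_n(\pi B(n+1,\alpha))^{-1}=\prod_n\eta_n^{-1}$, which gives \eqref{eq:finite-density}.

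For the radii, the point is that the density has the form $|\Delta|^2\prod_j w(|z_j|)$ with a radial weight $w(r)=(1-r^2)^{\alpha-1}$. By \cite[Theorem~26]{HKPV2006}, the set of moduli $\{|z_j|\}$ then has the law of independent radii $R_1,\dots,R_N$, where $R_{n+1}$ has density proportional to $r^{2n+1}w(r)$ on $(0,1)$. If one prefers a self-contained argument, expand $|\Delta|^2=\sum_{\sigma,\tau}\operatorname{sgn}(\sigma\tau)\prod_j z_j^{\sigma(j)-1}\bar z_j^{\tau(j)-1}$ and integrate out the angles. This kills all terms with $\sigma\ne\tau$, so the joint law of $(|z_j|)$ becomes the symmetrization of a product measure. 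Substituting $y=r^2$ turns $r^{2n+1}(1-r^2)^{\alpha-1}dr$ into a density proportional to $y^n(1-y)^{\alpha-1}dy$, that is, $\operatorname{Beta}(n+1,\alpha)$. Taking $j=n+1$ gives the independent $Y_j\sim\operatorname{Beta}(j,\alpha)$. The normalizations match, with $\int$ over angles giving $2\pi$ and $\int_0^1 y^n(1-y)^{\alpha-1}dy=B(n+1,\alpha)$. This is consistent with the constant $\eta_n$ above, and it serves as a check.

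Finally, from \eqref{eq:finite-function} we have $|F_{N,\alpha}(0)|=N^{\alpha/2}\prod_{x\in\Xi_{N,\alpha}}|x|$. Squaring gives $X_{N,\alpha}=N^\alpha\prod_x|x|^2$. This is a symmetric function of the unordered squared radii, so it has the law of $N^\alpha\prod_jY_j$, which is \eqref{eq:radial-product}.

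The only delicate step is the angular integration. One must justify it by Fubini, using that all terms are integrable against the finite measure. One must also keep track of the symmetrization, so that the claim is about the unordered multiset; the step is otherwise routine.
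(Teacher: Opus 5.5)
Your proposal is correct and follows the paper's proof. Both write the density as $\frac1{N!}$ times the squared determinant of the normalized monomials, reducing the constant via $\binom{n+\alpha}{n}\alpha B(n+1,\alpha)=1$; integrate out the angles in the expanded Vandermonde product so that only terms with $\sigma=\tau$ survive; substitute $t=|z|^2$ to get $\operatorname{Beta}(j,\alpha)$ radii (also citing \cite[Theorem~26]{HKPV2006}); and evaluate the finite product at zero.
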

\begin{proof}
The rank-$N$ determinant density is the squared determinant of the
normalized monomials, divided by $N!$. This gives
\eqref{eq:finite-density}. Expanding its two Vandermonde determinants and
integrating the angles leaves only terms with equal permutations. The
substitution $t_j=|z_j|^2$ gives the symmetrization of the densities
$t^{j-1}(1-t)^{\alpha-1}/B(j,\alpha)$. This is also the finite-rank
specialization of~\cite[Theorem~26]{HKPV2006}. Evaluation of
\eqref{eq:finite-function} at zero gives~\eqref{eq:radial-product}.
\end{proof}

\begin{lemma}\label{lem:local-convergence}
One has $\Xi_{N,\alpha}\Longrightarrow H_\alpha$ in $\Conf$. If
$\alpha_j\to\alpha>0$, then $H_{\alpha_j}\Longrightarrow H_\alpha$.
For every compact $A\Subset\D$, the localized operators on
$L^2(\mu_\alpha)$ also satisfy
\begin{equation}\label{eq:local-trace}
 \norm{\ind_A(K_\alpha-K_{N,\alpha})\ind_A}_1\longrightarrow0.
\end{equation}
\end{lemma}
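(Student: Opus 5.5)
The plan is to prove the trace-norm estimate \eqref{eq:local-trace} first and to derive both weak-convergence statements from trace-class convergence of localized kernels. Both $K_\alpha$ and $K_{N,\alpha}$ act on $L^2(\mu_\alpha)$. Since $\ind_A(K_\alpha-K_{N,\alpha})\ind_A$ has kernel $\sum_{n\ge N}e_{n,\alpha}(z)\overline{e_{n,\alpha}(w)}$ restricted to $A$, it is a positive operator. Its trace norm is therefore its trace,
\[
 \int_A\sum_{n\ge N}\binom{n+\alpha}{n}|z|^{2n}\,d\mu_\alpha(z).
\]
If $A\subset \overline{D_r}$ with $r<1$, this is bounded by $\sum_{n\ge N}\binom{n+\alpha}{n}r^{2n}$. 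That series is the tail of the convergent series $(1-r^2)^{-\alpha-1}$, so it tends to $0$. This gives \eqref{eq:local-trace}.

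For $\Xi_{N,\alpha}\Longrightarrow H_\alpha$, I will use the standard fact that determinantal processes with Hermitian locally trace-class kernels converge weakly in $\Conf$ whenever the localized operators converge in trace norm on every compact set. The fact follows because joint Laplace functionals / gap probabilities of the counts $(\Xi(B_1),\dots,\Xi(B_k))$ for bounded Borel $B_i\subset A$ are Fredholm determinants $\det(I-\ind_A(1-e^{-\sum t_i\ind_{B_i}})K\ind_A)$. Fredholm determinants are continuous in trace norm, so the finite-dimensional count distributions converge. Tightness in $\Conf$ follows from $\E\Xi_{N,\alpha}(A)\le\nu_\alpha(A)<\infty$ by \eqref{eq:intensity-domination}. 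Convergence of the counts on sets with null boundary then identifies the limit as $H_\alpha$ (see \cite{HKPV2006} or Kallenberg's criterion). One subtlety is the change of reference measure. Conjugating by multiplication with $\sqrt{d\mu_\alpha/dm}$ turns both kernels into kernels on $L^2(A,dm)$ with the same trace-norm difference, so nothing is lost.

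For $H_{\alpha_j}\Longrightarrow H_\alpha$, the reference measures vary with the parameter, so I will transfer everything to $L^2(A,dm)$. The conjugated kernel is
\[
 \tilde K_\beta(z,w)=\tfrac{\beta}{\pi}(1-|z|^2)^{(\beta-1)/2}(1-z\bar w)^{-\beta-1}(1-|w|^2)^{(\beta-1)/2},
\]
again a positive operator on $L^2(A,dm)$. I will show $\norm{\tilde K_{\alpha_j}-\tilde K_\alpha}_1\to0$ on $L^2(A,dm)$ using the criterion that positive trace-class operators $P_j\to P$ in the weak (or Hilbert–Schmidt) operator sense with $\operatorname{tr}P_j\to\operatorname{tr}P$ converge in trace norm. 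The kernels converge uniformly on $A\times A$ since $A$ is compact in $\D$, which gives Hilbert–Schmidt convergence. The traces $\int_A\rho_{\alpha_j}\,dm$ converge to $\int_A\rho_\alpha\,dm$ directly from \eqref{eq:intensity}. The Fredholm-determinant argument above then gives the weak convergence.

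The main obstacle is the rigorous passage from trace-norm convergence of localized kernels to vague weak convergence. I will handle it through the Fredholm-determinant continuity $|\det(I+S)-\det(I+T)|\le\norm{S-T}_1e^{1+\norm S_1+\norm T_1}$. The remaining steps are routine.
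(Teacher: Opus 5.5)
Your proof is correct, but it takes a different route from the paper's.

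The paper does not derive the weak convergence from \eqref{eq:local-trace}. It first notes that the planar kernels $\mathsf K$ converge locally uniformly in both limits, because the coefficients $\binom{n+\beta}{n}$ grow polynomially, uniformly for $\beta$ in a compact interval. It then expands the Laplace functional as the series \eqref{eq:laplace-functional} of correlation determinants. Hadamard's inequality bounds the $k$th term by $(\int_{\supp v}\rho\,dm)^k/k!$, so dominated convergence passes to the limit term by term. Tightness comes from common first-moment bounds. The trace-norm statement \eqref{eq:local-trace} is proved separately at the end, by positivity and dominated convergence on $A$.

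You instead make \eqref{eq:local-trace} the engine of the proof. Trace-norm convergence of the localized kernels, combined with the Lipschitz bound for Fredholm determinants, gives convergence of the Laplace functionals. In the parameter-varying case you conjugate to $L^2(A,dm)$. There you upgrade uniform kernel convergence on $A\times A$ (hence Hilbert--Schmidt convergence) to trace-norm convergence, using the criterion that positive operators converging weakly with convergent traces converge in trace norm. Your tail bound by $\sum_{n\ge N}\binom{n+\alpha}{n}r^{2n}$, together with $\mu_\alpha(\D)=1$, is also slightly more explicit than the paper's dominated-convergence step.

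Your route unifies both convergence statements under one principle. The cost is two imported operator-theoretic facts: the Fredholm-determinant formula for Laplace functionals of determinantal processes with locally trace-class kernels, and the trace-norm convergence criterion. The paper's argument needs only locally uniform kernel convergence and a common locally integrable intensity bound. It therefore requires no trace-class upgrade when the reference measures change with the parameter.
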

\begin{proof}
To compare parameters, use the common planar reference measure and set
\[
 \mathsf K_\alpha(z,w)=
 \sqrt{\tfrac{d\mu_\alpha}{dm}(z)}K_\alpha(z,w)
 \sqrt{\tfrac{d\mu_\alpha}{dm}(w)},
\]
with the analogous notation for finite rank. The monomial series converges
locally uniformly. When the parameter belongs to $[a,b]\Subset(0,\infty)$,
the coefficients have uniformly polynomial growth, which is dominated on
compact disks by the geometric decay of $(z\bar w)^n$. The kernels
therefore converge locally uniformly in both limits under consideration,
and the intensities on each compact set have a common integrable bound.

For completeness, let $v\in C_c(\D)$ be nonnegative and $h=1-e^{-v}$. The
Laplace functional is
\begin{equation}\label{eq:laplace-functional}
 \E e^{-\langle v,\xi\rangle}
 =\sum_{k=0}^\infty\frac{(-1)^k}{k!}
   \int_{\D^k}\prod_{j=1}^kh(z_j)
        \det[\mathsf K(z_i,z_j)]_{i,j=1}^k\,dm^k.
\end{equation}
Hadamard's inequality bounds the absolute value of the $k$th term by
$(\int_{\supp v}\rho\,dm)^k/k!$. Dominated convergence applies to this
series. Common local first-moment bounds give tightness of the counting
measures: along a compact exhaustion, choose successively increasing
bounds on local mass whose exceptional probabilities are summable.
Sets with these local mass bounds are relatively compact in the vague
topology. All cluster laws have the limiting Laplace functional, which
determines the law on $\Conf$.

Finally, $K_\alpha-K_{N,\alpha}$ is positive, and remains positive after
localization. Its trace norm equals its trace:
\[
 \norm{\ind_A(K_\alpha-K_{N,\alpha})\ind_A}_1
 =\sum_{n\ge N}\int_A|e_{n,\alpha}|^2d\mu_\alpha
 =\int_A(K_\alpha(z,z)-K_{N,\alpha}(z,z))d\mu_\alpha(z).
\]
The last expression tends to zero by dominated convergence on $A$.
\end{proof}

\subsection{Analytic coordinates and centered statistics}
\label{subsec:analytic-tools}

For $f(0)\ne0$, set
\begin{equation}\label{eq:jet-definition}
 L_k(f)=\frac{(\log f)^{(k)}(0)}{(k-1)!}
       =k[z^k]\log\frac{f(z)}{f(0)},\qquad k\ge1.
\end{equation}
Here the local logarithm on the right vanishes at zero; the derivatives do
not depend on any logarithmic branch choice. We call the finite vectors
$(L_1(f),\ldots,L_K(f))$ logarithmic jets.

The spaces $\Hol$ and $\Conf$ are Polish. A compatible complete metric on
$\Hol$ is obtained from the suprema on a countable compact-disk exhaustion.
The following facts specify how we pass from function limits to their
zeros and logarithmic coordinates.

\begin{lemma}\label{lem:analytic-coordinates}
Writing $a_n(f)=f^{(n)}(0)/n!$, one has
$\mathcal B(\Hol)=\sigma(a_0,a_1,\ldots)$. On the open set $f(0)\ne0$,
the vectors $(f(0),L_1(f),\ldots,L_K(f))$ and
$(a_0(f),\ldots,a_K(f))$ are related by continuous triangular maps.
The zero-divisor map $Z:\Hol\setminus\{0\}\to\Conf$ is continuous.
\end{lemma}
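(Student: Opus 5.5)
The lemma has three parts, and I would prove them in the order stated.

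\emph{Borel $\sigma$-algebra.} Each $a_n$ is continuous on $\Hol$ by the Cauchy formula on a fixed circle $|z|=r<1$. Hence $\sigma(a_0,a_1,\ldots)\subset\mathcal B(\Hol)$. For the reverse inclusion, recall that the compatible metric is built from the seminorms $\sup_{D_r}|f-g|$ for $r$ in a countable set $r_j\uparrow1$. On $\overline{D_{r_j}}$ the Taylor series converges uniformly, so
\[
\sup_{|z|\le r_j}|f(z)-g(z)|=\sup_{z\in Q_j}\Bigl|\sum_n (a_n(f)-a_n(g))z^n\Bigr|,
\]
where $Q_j$ is a countable dense subset of $\overline{D_{r_j}}$. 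The map $f\mapsto\sum_n a_n(f)z^n$ is a pointwise limit of finite sums of coordinates, so each such supremum is $\sigma(a)$-measurable in $f$ for fixed $g$. Consequently every open ball is $\sigma(a)$-measurable. Since $\Hol$ is Polish, hence separable, every open set is a countable union of balls, and the reverse inclusion follows.

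\emph{Triangular maps.} Write $f(z)/f(0)=1+\sum_{n\ge1}(a_n/a_0)z^n$ and $\log(f/f(0))=\sum_k (L_k/k)z^k$. Differentiating gives $f'=f\cdot(\log f)'$. Comparing coefficients of $z^{n-1}$ yields the Newton-type recursion
\[
n a_n=\sum_{k=1}^{n} L_k\,a_{n-k},\qquad n\ge1.
\]
Thus $a_n$ is a polynomial in $a_0,L_1,\ldots,L_n$, which gives the continuous map in one direction. Conversely,
\[
L_n=\Bigl(na_n-\sum_{k=1}^{n-1}L_k a_{n-k}\Bigr)\Big/a_0,
\]
so $L_n$ is a continuous (rational) function of $a_0,\ldots,a_n$ on the set $a_0\neq0$. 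Both maps are triangular and are mutually inverse on $\{a_0\ne0\}\times\C^K$.

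\emph{Continuity of $Z$.} Suppose $f_j\to f\not\equiv0$ compact-open, and let $v\in C_c(\D)$. Choose $r<1$ such that $\supp v\subset D_r$ and $f$ has no zeros on $|z|=r$; this is possible because the zeros of $f$ are discrete. By Hurwitz's theorem, for large $j$ the zeros of $f_j$ in $D_r$ converge, with multiplicity, to those of $f$. Concretely, around each zero $z_0$ of $f$ of order $m$ we take a small disk containing no other zeros and nonvanishing boundary, and the argument principle shows that for large $j$ exactly $m$ zeros of $f_j$ lie in it. Away from these small disks, $|f|$ is bounded below on the compact remainder of $\overline{D_r}$, so $f_j$ has no zeros there. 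Uniform continuity of $v$ then gives $\langle v,Z(f_j)\rangle\to\langle v,Z(f)\rangle$, which is vague convergence.

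The only mildly delicate point is the measurability bookkeeping in the first part, namely reducing the supremum over the closed disk to a supremum over a countable dense set. The remaining parts are classical.
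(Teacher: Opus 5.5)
Your proof is correct and follows essentially the same route as the paper. For the Borel identity you both use continuity of the coefficients via Cauchy's formula, together with countable-dense suprema of point evaluations obtained as limits of Taylor polynomials. For the jets you use the recursion $na_n=\sum_{k=1}^nL_ka_{n-k}$ (the paper states it for $c_n=a_n/a_0$) and invert it directly on $a_0\ne0$, where the paper also invokes the expansion of $\log(1+\sum_{n\ge1}c_nz^n)$; both are valid. For the divisor map you both localize zeros in small disks by Rouch\'e's theorem or the argument principle and then use uniform continuity of the test function.
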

\begin{proof}
Each coefficient is continuous by Cauchy's formula. Conversely, point
evaluations are measurable limits of Taylor polynomials, and compact
suprema may be taken over a countable dense set. This proves the Borel
assertion. If $f/f(0)=\sum_{n\ge0}c_nz^n$ with $c_0=1$, then
\begin{equation}\label{eq:taylor-recursion}
 nc_n=\sum_{k=1}^nL_k(f)c_{n-k},\qquad n\ge1.
\end{equation}
It follows by differentiating the local logarithm. This recursion and the
power-series expansion of $\log(1+\sum_{n\ge1}c_nz^n)$ give the two
triangular maps; their denominators involve only powers of $f(0)$.

If $f_j\to f\not\equiv0$ locally uniformly, enclose the support of a
continuous compactly supported test function in a disk whose boundary has
no zeros of $f$. There are finitely many zeros inside that disk. Surround
them by disjoint small disks with zero-free boundaries. Rouch\'e's theorem
preserves the total multiplicity in each small disk, while a positive lower
bound for $|f|$ excludes other zeros on the remaining compact set.
Shrinking the small disks and using uniform continuity of the test
function proves vague convergence of the zero divisors.
\end{proof}

We will repeatedly retain auxiliary coordinates when removing cutoffs.
Here is the precise form of that approximation argument.

\begin{lemma}\label{lem:joint-approximation}
Suppose $(W_n,\xi_n)\Longrightarrow(W,\xi)$ in a product of Polish spaces.
Let $V_n$ and $V_{n,m}$ take values in another Polish space with compatible
metric $d$. Assume that, for each fixed $m$,
\[
 (W_n,\xi_n,V_{n,m})\Longrightarrow(W,\xi,V_m),
\]
that $V_m\to V$ in probability on the same probability space as
$(W,\xi)$, and that, for every $\varepsilon>0$,
\[
 \lim_{m\to\infty}\limsup_{n\to\infty}
       \Prob\{d(V_n,V_{n,m})>\varepsilon\}=0.
\]
Then $(W_n,\xi_n,V_n)\Longrightarrow(W,\xi,V)$.
\end{lemma}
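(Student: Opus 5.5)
The plan is to reduce the statement to the standard "approximating sequences" criterion for weak convergence (Billingsley, Theorem~3.2) and then apply the Portmanteau theorem with bounded Lipschitz test functions. Write $X_n=(W_n,\xi_n,V_n)$, $X_{n,m}=(W_n,\xi_n,V_{n,m})$ and $X_m=(W,\xi,V_m)$, all taking values in the Polish product space $E$ of the three factors. Equip $E$ with the product metric $\rho$, namely the maximum of the three component metrics. In this notation the hypotheses become three facts: $X_{n,m}\Longrightarrow X_m$ as $n\to\infty$ for each fixed $m$; $X_m\to X:=(W,\xi,V)$ in probability, because the first two coordinates of $X_m$ and $X$ coincide and $d(V_m,V)\to0$ in probability; and
$\lim_m\limsup_n\Prob\{\rho(X_n,X_{n,m})>\varepsilon\}=0$, because $\rho(X_n,X_{n,m})=d(V_n,V_{n,m})$.

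To prove that $X_n\Longrightarrow X$, fix a bounded Lipschitz function $h$ on $E$ with $\|h\|_\infty\le1$ and Lipschitz constant $L$. Bounded Lipschitz functions determine weak convergence by the Portmanteau theorem. Split
\[
 |\E h(X_n)-\E h(X)|\le |\E h(X_n)-\E h(X_{n,m})|+|\E h(X_{n,m})-\E h(X_m)|+|\E h(X_m)-\E h(X)|.
\]
Estimate the three terms as follows.

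\begin{enumerate}
\item The first term is at most $L\varepsilon+2\Prob\{d(V_n,V_{n,m})>\varepsilon\}$.
\item The second term tends to $0$ as $n\to\infty$ for fixed $m$, by the assumed joint convergence.
\item The third term is at most $L\varepsilon+2\Prob\{d(V_m,V)>\varepsilon\}$. This bound uses that $X_m$ and $X$ live on the same probability space.
\end{enumerate}

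Now take $\limsup_n$, then $\limsup_m$, and finally let $\varepsilon\downarrow0$. This gives $\limsup_n|\E h(X_n)-\E h(X)|=0$.

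There is no real obstacle here; the only point needing care is notational. The second hypothesis concerns $V_m$ defined jointly with the same $(W,\xi)$, and this is exactly what makes the third term controllable. Had the limits $V_m$ been defined only in distribution, the argument would fail, so the proof should state explicitly that convergence in probability is used on the common space.
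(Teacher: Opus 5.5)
Your proposal is correct and follows essentially the same route as the paper: test against a bounded Lipschitz function, replace $V_n$ by $V_{n,m}$ at a cost of $L\varepsilon$ plus twice the sup norm times the exceptional probability, and pass to the limit first in $n$, then in $m$, then in $\varepsilon$. Your version also writes out the third term, controlled by convergence in probability of $V_m$ to $V$ on the common space, which the paper's proof uses only implicitly.
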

\begin{proof}
Apply a bounded Lipschitz test function on the product. Replacing $V_n$ by
$V_{n,m}$ changes its expectation by at most its Lipschitz constant times
$\varepsilon$, plus twice its supremum norm times the displayed exceptional
probability. Pass first $n\to\infty$, then $m\to\infty$, and finally
$\varepsilon\downarrow0$. The other coordinates are unchanged throughout.
\end{proof}

For a projection process $\xi$ with planar kernel $\mathsf K$ and
intensity $\rho$, initially define
\[
 \X_\xi(f)=\sum_{x\in\xi}f(x)-\int_\D f(z)\rho(z)dm(z)
\]
for bounded compactly supported complex-valued $f$.

\begin{lemma}\label{lem:variance}
For a Hermitian locally trace-class projection kernel,
\begin{equation}\label{eq:variance-bound}
 \E|\X_\xi(f)|^2\le\int_\D|f|^2\rho\,dm.
\end{equation}
Consequently, $\X_\xi$ extends uniquely to a continuous linear map from
$L^2(\rho\,dm)$ into the centered random variables in $L^2$.
\end{lemma}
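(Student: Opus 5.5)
The plan is to expand the second moment of $\X_\xi(f)$ with the first and second correlation functions, and then use the reproducing identity of a projection kernel. I first take $f$ bounded with compact support, so every integral is finite; the local trace-class hypothesis gives $\int_A\rho\,dm<\infty$ on compacts $A$.

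\textbf{Step 1: the variance formula.} The correlation functions are $\rho(z)=\mathsf K(z,z)$ and $\rho_2(z,w)=\rho(z)\rho(w)-|\mathsf K(z,w)|^2$. Expanding $|\sum_x f(x)|^2$ into diagonal and off-diagonal parts gives
\[
 \E|\X_\xi(f)|^2=\int_\D|f|^2\rho\,dm-\iint_{\D^2} f(z)\overline{f(w)}\,|\mathsf K(z,w)|^2\,dm(z)\,dm(w).
\]
Here Hermitian symmetry gives $\mathsf K(z,w)\mathsf K(w,z)=|\mathsf K(z,w)|^2$.

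\textbf{Step 2: the double integral is nonnegative.} Write $|\mathsf K(z,w)|^2=\mathsf K(z,w)\overline{\mathsf K(z,w)}$. The double integral then equals
\[
 \int_\D\Bigl|\int_\D f(w)\,\overline{\mathsf K(w,z)}\,\ldots\Bigr|^2
\]
in operator form. More precisely, it is $\operatorname{tr}(M_f\mathsf K M_{\bar f}\mathsf K)=\norm{M_{\bar f}^{1/2}\ldots}$. The cleanest route is to write it as $\operatorname{tr}\bigl((\mathsf K M_{\bar f}\mathsf K)^*(\ldots)\bigr)$.

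The simplest version of this argument uses the orthogonal decomposition of $|f|^2\rho$. By the reproducing property, $\rho(z)=\mathsf K(z,z)=\int|\mathsf K(z,w)|^2dm(w)$. This identity holds for an orthogonal projection kernel with $\mathsf K(\cdot,z)$ in $L^2$.

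Write the double integral as $\int\bigl\|\mathsf K_z\bigr\|^2\ldots$. Then set $g_z(w)=f(w)\mathsf K(w,z)$, and realize the double integral as $\norm{\mathsf K M_f \mathsf K}_{HS}^2\ge0$. This realization uses $\mathsf K=\mathsf K^2$ and cyclicity of the trace on the localized trace-class operator $\ind_A\mathsf K\ind_A$, where $A\supset\supp f$.

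Dropping this nonnegative term yields \eqref{eq:variance-bound}. The main technical point is justifying the trace and cyclicity manipulations. Localization to $A$ makes $\ind_A\mathsf K\ind_A$ trace class and $M_f$ bounded, so every operator involved is Hilbert--Schmidt and the manipulations are legitimate.

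\textbf{Step 3: extension to $L^2(\rho\,dm)$.} Bounded compactly supported functions are dense in $L^2(\rho\,dm)$, since $\rho\,dm$ is Radon. By Step 2, $f\mapsto\X_\xi(f)$ is linear and $1$-Lipschitz from this dense subspace into $L^2(\Prob)$. Its values are centered, because $\E\sum_x f(x)=\int f\rho\,dm$.

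The map therefore extends uniquely by continuity, and the closed subspace of centered variables contains all the limits. Uniqueness of the extension is automatic from density.
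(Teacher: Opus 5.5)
Your proposal is correct and follows the paper's proof: the variance formula comes from the first two correlation functions, the subtracted term is nonnegative because it is the squared Hilbert--Schmidt norm of $\mathsf K M_f\mathsf K$, and the map extends by density. The paper uses the adjoint $\mathsf K M_{\bar f}\mathsf K$, which has the same Hilbert--Schmidt norm, and your justification via $\mathsf K=\mathsf K^2$ and the localization $\ind_A\mathsf K\ind_A$ is more detailed than the paper's. You should delete the abandoned fragments in Step~2, namely the ellipses and $M_{\bar f}^{1/2}$, which is meaningless for complex $f$.
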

\begin{proof}
The first two correlation functions give
\[
 \E|\X_\xi(f)|^2=\int|f|^2\rho\,dm
 -\iint f(z)\overline{f(w)}|\mathsf K(z,w)|^2dm(z)dm(w).
\]
The subtracted term is the squared Hilbert--Schmidt norm of
$\mathsf K M_{\bar f}\mathsf K$, hence is nonnegative. Approximation by
bounded compactly supported functions proves the extension.
\end{proof}

This is a centered completion: the sum and its mean need not converge
separately. The general centered-statistic framework is discussed
in~\cite[Section~4.1]{BQ2017}. In this paper the elementary bound
\eqref{eq:variance-bound} suffices. For finitely many symbols it can be
summed coordinatewise, and it also applies to the finite projection
processes.

\subsection{Scalar normalization at the origin}\label{subsec:scalar}

The radial identity fixes both the scale $N^{\alpha/2}$ and the limiting
basepoint law. We record its full moment domain because negative moments
will be used in the change of measure and the logarithmic moment bounds.

\begin{proposition}\label{prop:scalar}
The absolute Mellin domain of $X_{N,\alpha}$ is
$\operatorname{Re}u>-1$, and on this domain
\begin{align}
 M_{N,\alpha}(u):=\E X_{N,\alpha}^u
 &=N^{\alpha u}\prod_{j=1}^N
   \frac{\Gamma(j+u)\Gamma(j+\alpha)}
        {\Gamma(j)\Gamma(j+\alpha+u)}\label{eq:finite-mellin}\\
 &=M_\alpha(u)N^{\alpha u}
   \frac{G(N+1+u)G(N+1+\alpha)}
        {G(N+1)G(N+1+\alpha+u)}.\label{eq:finite-barnes}
\end{align}
For every compact $J\Subset\{\operatorname{Re}u>-1\}$,
\begin{equation}\label{eq:balanced-barnes}
 M_{N,\alpha}(u)=M_\alpha(u)(1+O_{J,\alpha}(N^{-1})),
 \qquad u\in J,
\end{equation}
uniformly also when $\alpha$ belongs to a positive compact interval.
There is an almost surely positive finite random variable $X_\alpha$ such
that $X_{N,\alpha}\Longrightarrow X_\alpha$ and
\begin{equation}\label{eq:scalar-limit}
 \E X_\alpha^u=M_\alpha(u),\qquad \operatorname{Re}u>-1.
\end{equation}
For $\operatorname{Re}u\le-1$, $\E|X_\alpha^u|=\infty$. For every
$0<\delta<1$,
\begin{equation}\label{eq:log-moments}
 \sup_N\E e^{\delta|\log X_{N,\alpha}|}<\infty,
 \qquad \E e^{\delta|\log X_\alpha|}<\infty.
\end{equation}
\end{proposition}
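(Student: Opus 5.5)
We start from Lemma~\ref{lem:radial}: $X_{N,\alpha}\overset d=N^\alpha\prod_{j=1}^N Y_j$ with independent $Y_j\sim\operatorname{Beta}(j,\alpha)$. For each factor, $\E Y_j^u=B(j+u,\alpha)/B(j,\alpha)=\Gamma(j+u)\Gamma(j+\alpha)/(\Gamma(j)\Gamma(j+\alpha+u))$. This is finite absolutely if and only if $\operatorname{Re}u>-j$, and the binding constraint is $j=1$. Independence then gives \eqref{eq:finite-mellin} on $\operatorname{Re}u>-1$. For $\operatorname{Re}u\le-1$ the $j=1$ factor has $\E|Y_1^u|=\E Y_1^{\operatorname{Re}u}=\infty$, because the density of $Y_1$ is positive near $0$. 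The other factors have strictly positive absolute moments, so the full product has $\E|X_{N,\alpha}^u|=\infty$, and the domain is maximal. To obtain \eqref{eq:finite-barnes}, write $\Gamma(j+s)=G(j+1+s)/G(j+s)$ and telescope. This gives $\prod_{j=1}^N\Gamma(j+s)=G(N+1+s)/G(1+s)$, and assembling the four products yields the displayed Barnes ratio times $M_\alpha(u)$.

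For \eqref{eq:balanced-barnes}, we use the asymptotic expansion of $\log G(z+a)$ as $z\to\infty$:
\[
\log G(z+1+a)=\tfrac{(z+a)^2}{2}\log z-\tfrac34 z^2-\tfrac{a^2}{2}\cdot 0+\cdots+\tfrac{z+a}{2}\log 2\pi-\tfrac1{12}\log z+\zeta'(-1)+O(1/z).
\]
In the standard form, $\log G(z+a+1)-\log G(z+1)=a z\log z - a z+\tfrac{a}{2}\log 2\pi+\tfrac{a^2}{2}\log z+O(1/z)$, with the error uniform for $a$ in compacts. Apply this with $a\in\{u,\alpha,\alpha+u\}$ and $z=N$. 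The terms linear in $a$ cancel because $u+\alpha-(\alpha+u)=0$. The quadratic terms combine to $\tfrac12(u^2+\alpha^2-(\alpha+u)^2)\log N=-\alpha u\log N$, which cancels $N^{\alpha u}$ exactly. What remains is $1+O(N^{-1})$, uniformly for $u\in J$ and $\alpha$ in a compact interval; uniformity in complex $u$ follows because the Stirling-type remainder is uniform on compact sets of shifts. This bookkeeping, with constants uniform in both $u$ and $\alpha$, is the main technical step, though it is routine.

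Now take $u=it$ real-imaginary. Then $M_{N,\alpha}(it)$ is the characteristic function of $\log X_{N,\alpha}$, and it converges to $M_\alpha(it)$, which is continuous at $t=0$. Lévy's continuity theorem gives $\log X_{N,\alpha}\Rightarrow L$ for some real random variable $L$, and we set $X_\alpha=e^L$, which is almost surely positive and finite. For the exponential bounds \eqref{eq:log-moments}, fix $0<\delta<1$. Then $e^{\delta|\log x|}\le x^\delta+x^{-\delta}$, and $\pm\delta\in(-1,\infty)$, so \eqref{eq:balanced-barnes} on $J=\{\pm\delta\}$ gives $\sup_N\E e^{\delta|\log X_{N,\alpha}|}<\infty$. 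Fatou's lemma transfers the bound to $X_\alpha$.

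This uniform integrability also identifies the limiting moments. For $\operatorname{Re}u\in(-1,\infty)$, choose $p>1$ with $p\operatorname{Re}u>-1$. Then $\sup_N\E|X_{N,\alpha}^u|^p=\sup_N M_{N,\alpha}(p\operatorname{Re}u)<\infty$, so $\{X_{N,\alpha}^u\}$ is uniformly integrable. Passing to the limit gives $\E X_\alpha^u=M_\alpha(u)$, which is \eqref{eq:scalar-limit}. Maximality for $X_\alpha$ needs a separate argument. The function $M_\alpha(s)$ for real $s\downarrow-1$ has a pole from $G(1+u)$, since $G$ has a simple zero at $0$, while the numerator $G(\alpha)$ is nonzero for $\alpha>0$. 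Hence $\E X_\alpha^s=M_\alpha(s)\to\infty$ as $s\downarrow-1$. Monotone convergence on $\{X_\alpha<1\}$ then gives $\E X_\alpha^{-1}=\infty$, and for $\operatorname{Re}u\le-1$ we have $|X_\alpha^u|\ge X_\alpha^{-1}$ on $\{X_\alpha<1\}$, so $\E|X_\alpha^u|=\infty$.
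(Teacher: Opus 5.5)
Your proof is correct, but it reaches the limit law and its maximal domain by a different route from the paper's.

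The finite Mellin formula and its domain match the paper. So do the Barnes telescoping and the uniform-integrability identification of $\E X_\alpha^u$. Your shifted Barnes expansion, where the linear terms cancel and the quadratic terms give $-\alpha u\log N$, is equivalent to the paper's second-difference integral of $g''(z)=\log z+O(|z|^{-2})$. Your first displayed expansion is garbled, but only the difference formula is used, and that formula is right.

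There are two genuine differences.

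\begin{enumerate}
\item You obtain $X_\alpha$ from L\'evy's continuity theorem applied to $t\mapsto M_{N,\alpha}(it)$, the characteristic functions of $\log X_{N,\alpha}$. Positivity and finiteness are then automatic, because the limit of $\log X_{N,\alpha}$ is a real random variable. The paper instead builds $X_\alpha$ on one probability space as the almost sure limit of $N^\alpha\prod_{j\le N}Y_j$. It uses digamma/trigamma asymptotics and $L^2$ martingale convergence of the centered log-beta series.
\item You prove divergence for $\operatorname{Re}u\le-1$ analytically. $M_\alpha(s)\to+\infty$ as $s\downarrow-1$, because $G$ has a simple zero at $0$ while $G(\alpha)>0$. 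Monotonicity of $s\mapsto X_\alpha^s$ on $\{X_\alpha<1\}$ then transfers the blow-up to $s=-1$ and below. The paper instead factors $X_\alpha=Y_1Z_\alpha$ with $Z_\alpha$ independent and inherits the divergence from $\E Y_1^{s}=\infty$.
\end{enumerate}

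Your route derives everything from the single estimate \eqref{eq:balanced-barnes} and is shorter. It produces $X_\alpha$ only in law, which is all the proposition asserts.

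The paper's route yields more structure. It gives an explicit almost sure infinite independent beta-product representation of $X_\alpha$, the form behind Remark~\ref{rem:barnes-beta}. It also explains the edge $\operatorname{Re}u=-1$ probabilistically, as coming from the innermost radius $Y_1$, without using the zero structure of $G$.
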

\begin{proof}
The beta integral in~\eqref{eq:radial-product} gives
\eqref{eq:finite-mellin}. The most restrictive integrability condition is
that of $Y_1$, namely $\operatorname{Re}u>-1$.
The Barnes recursion gives~\eqref{eq:finite-barnes}.

To verify the normalization in its asymptotic form, let
$g(z)=\log G(z+1)$ in a sector containing the positive real axis. The
standard Barnes expansion~\cite[Section~5.17, Eq.~5.17.5]{DLMF} and
Cauchy's formula imply
\[
 g''(z)=\log z+O(|z|^{-2})
\]
uniformly in a smaller sector. Indeed, the leading terms of $g$ are
$\frac12z^2\log z-\frac34z^2+\frac12z\log(2\pi)
-\frac1{12}\log z$; the analytic remainder can be differentiated on the
smaller sector. Hence
\begin{align*}
 &g(N+u)+g(N+\alpha)-g(N)-g(N+\alpha+u)\\
 &\qquad=-\int_0^u\int_0^\alpha g''(N+s+t)\,dt\,ds
        =-\alpha u\log N+O_{J,\alpha}(N^{-1}).
\end{align*}
For complex $u$ the first integral follows the line segment from $0$ to
$u$. Both segments remain in the region of a common estimate for all
sufficiently large $N$. Exponentiating proves
\eqref{eq:balanced-barnes} and its parameter uniformity.

For positivity of the limit, take independent
$Y_j\sim\operatorname{Beta}(j,\alpha)$ on one probability space.
Differentiation of the beta integral, followed by the standard gamma
asymptotics~\cite[Section~5.11]{DLMF}, gives
\begin{align*}
 \E\log Y_j&=\psi(j)-\psi(j+\alpha)=-\alpha/j+O(j^{-2}),\\
 \Var(\log Y_j)&=\psi_1(j)-\psi_1(j+\alpha)=O(j^{-2}),
\end{align*}
where $\psi$ and $\psi_1$ are the digamma and trigamma functions. Thus the
centered series of logarithms converges almost surely and in $L^2$ by the
$L^2$ martingale convergence theorem. Its deterministic part also
converges, since the error after adding $\alpha/j$ is summable and
$\sum_{j\le N}j^{-1}-\log N$ converges. Consequently,
\[
 \alpha\log N+\sum_{j=1}^N\log Y_j\longrightarrow\Lambda_\alpha\in\R
 \quad\text{almost surely}.
\]
Set $X_\alpha=e^{\Lambda_\alpha}$. This coupling concerns only the radial
products; it does not assert a coupling of the original finite functions.

Given $u$ with $\operatorname{Re}u>-1$, choose $p>1$ with
$p\operatorname{Re}u>-1$. Formula~\eqref{eq:balanced-barnes} bounds the
$p$th absolute moment of the corresponding radial product raised to $u$.
Uniform integrability therefore proves~\eqref{eq:scalar-limit}, including
negative real parts. To prove maximality at the limit, separate the first
beta variable:
\[
 X_\alpha=Y_1Z_\alpha,
\]
where $0<Z_\alpha<\infty$ almost surely and $Z_\alpha$ is independent of
$Y_1$. The same logarithmic-series argument constructs $Z_\alpha$ from
$Y_j$, $j\ge2$. Since $\E Y_1^s=\infty$ for $s\le-1$, conditioning on
$Z_\alpha$ proves divergence for all such $s$. Finally,
$e^{\delta|\log x|}\le x^\delta+x^{-\delta}$ yields
\eqref{eq:log-moments}.
\end{proof}

\begin{remark}[Barnes beta and infinite beta-product identification]
\label{rem:barnes-beta}
The limiting scalar law is a previously studied infinite beta-product
law. In the notation of Letemplier and
Simon~\cite{LetemplierSimon2019},
\[
 X_\alpha\overset d=\Gamma(1+\alpha)\,T(1,1,\alpha).
\]
Equivalently, in the Barnes-beta notation of
Ostrovsky~\cite{Ostrovsky2016},
\[
 X_\alpha\overset d=
 \beta_{2,1}\bigl((1,1),(1,\alpha)\bigr).
\]
Indeed,
\[
 \prod_{j=1}^N\frac{j+\alpha}{j}
 =
 \frac{\Gamma(N+1+\alpha)}
      {\Gamma(1+\alpha)\Gamma(N+1)}
 \sim\frac{N^\alpha}{\Gamma(1+\alpha)},
\]
which gives the first identification from the radial beta product,
while the second follows by specializing the double-gamma Mellin
transform of $\beta_{2,1}$ and rewriting it in terms of the Barnes
$G$-function. 
\end{remark}

We shall use
\begin{equation}\label{eq:cN}
 c_{N,\alpha}=\frac12(\log M_{N,\alpha})'(0)\longrightarrow c_\alpha.
\end{equation}
The convergence follows from locally uniform analytic convergence in
\eqref{eq:balanced-barnes}. The function $c_\alpha$ is continuous for
$\alpha>0$. For later reference, negative moments also give, for $0<q<1$,
\begin{equation}\label{eq:basepoint-small}
 \sup_N\Prob\{|F_{N,\alpha}(0)|\le\varepsilon\}
       \le C_{\alpha,q}\varepsilon^{2q},\qquad 0<\varepsilon<1.
\end{equation}
In particular, evaluation at zero identifies the squared basepoint modulus
of any analytic subsequential limit with $X_\alpha$.

\section{Canonical realization and reconstruction}
\label{sec:construction}

We first prove tightness using a bounded statistic. The centered
logarithm at the origin and the logarithmic jets then identify the
normalized shape of every analytic cluster limit on its actual divisor.
A vanishing rotation determines the conditional phase. These arguments
prove Theorem~\ref{thm:realization} without a change of measure. We
subsequently obtain the product representation and all-center Green
identities needed for the geometric results.

\subsection{Tightness from a bounded statistic}
\label{subsec:two-inputs}\label{subsec:bounded-tightness}

The bounded statistic used below already appears in Krishnapur's
tightness argument~\cite[proof of Lemma~14, Section~7]{Krishnapur2009}.
For the present finite ensembles, the determinantal variance contraction
and the subharmonic mean inequality give the following compact-open
estimate for every $\alpha>0$.

\begin{proposition}\label{prop:finite-tightness}
The laws of $F_{N,\alpha}$ are tight in $\Hol$. More precisely, for every
$r<1$ and $M>1$,
\begin{equation}\label{eq:tightness-rate}
 \sup_N\Prob\left\{\sup_{|z|\le r}|F_{N,\alpha}(z)|>M\right\}
             \le\frac{C_{\alpha,r}}{\log M}.
\end{equation}
\end{proposition}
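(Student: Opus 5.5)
The plan is to bound $\log|F_{N,\alpha}|$ from above by a centered bounded statistic, whose variance Lemma~\ref{lem:variance} controls, plus a deterministic mean error of order one. The estimate is then propagated from points to suprema by subharmonicity.

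\emph{Pointwise domination.} Write $\varphi_w(x)=(w-x)/(1-w\bar x)$ and $h_w(x)=1-|\varphi_w(x)|^2=(1-|w|^2)(1-|x|^2)/|1-\bar w x|^2\in[0,1]$. Since $\log(1-t)\le -t$, definition~\eqref{eq:finite-function} gives
\[
 2\log|F_{N,\alpha}(w)|\le \alpha\log N-\sum_{x\in\Xi_{N,\alpha}}h_w(x)
 =\bigl(\alpha\log N-E_N(w)\bigr)-\X_{\Xi_{N,\alpha}}(h_w),
\]
where $E_N(w)=\int h_w\rho_{N,\alpha}\,dm$.

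\emph{Variance bound.} By Lemma~\ref{lem:variance} and~\eqref{eq:intensity-domination},
\[
 \E|\X_{\Xi_{N,\alpha}}(h_w)|^2\le\int h_w^2\rho_\alpha\,dm .
\]
The M\"obius invariance of $\rho_\alpha\,dm$ under $\varphi_w$ turns the right side into
\[
 \int(1-|y|^2)^2\frac{\alpha}{\pi(1-|y|^2)^2}\,dm(y)=\alpha .
\]
So the centered part has second moment at most $\alpha$, uniformly in $N$ and $w$.

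\emph{Mean bound.} I need $|\alpha\log N-E_N(w)|\le C_{\alpha,r'}$ for $|w|\le r'<1$. Since $\rho_{N,\alpha}$ is radial, I first average $h_w$ over the circle $|x|=s$. The Poisson-kernel average gives
\[
 (1-s^2)\,\frac{1-|w|^2}{1-|w|^2s^2},
\]
which differs from $h_0=1-s^2$ by at most $(1-s^2)^2/(1-r'^2)$. The function $(1-|x|^2)^2\rho_\alpha(x)$ is integrable on $\D$, so $|E_N(w)-E_N(0)|\le C_{\alpha,r'}$ uniformly in $N$.

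At the origin, Lemma~\ref{lem:radial} gives $E_N(0)=\sum_{j=1}^N\E(1-Y_j)$ with $Y_j\sim\operatorname{Beta}(j,\alpha)$. This sum equals $\sum_j\alpha/(j+\alpha)=\alpha\log N+O_\alpha(1)$. Hence $\E\bigl(2\log|F_{N,\alpha}(w)|\bigr)^+\le C_{\alpha,r'}$ uniformly for $|w|\le r'$ and all $N$.

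\emph{From points to suprema.} Fix $r<r'<1$ and set $\delta=r'-r$. The function $\log^+|F_{N,\alpha}|$ is subharmonic, so the area mean inequality on $D(z,\delta)$ gives
\[
 \sup_{|z|\le r}\log^+|F_{N,\alpha}(z)|\le (\pi\delta^2)^{-1}\int_{D_{r'}}\log^+|F_{N,\alpha}|\,dm .
\]
Taking expectations with Fubini and the pointwise bound yields $\sup_N\E\sup_{|z|\le r}\log^+|F_{N,\alpha}(z)|\le C_{\alpha,r}$. Markov's inequality applied to the event $\{\sup\log^+|F_{N,\alpha}|>\log M\}$ then gives~\eqref{eq:tightness-rate}. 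Tightness in $\Hol$ follows because uniformly bounded sets on each $D_r$, chosen along a countable exhaustion with summable exceptional probabilities, are relatively compact by Montel's theorem.

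The main obstacle is the uniform mean bound for $E_N(w)$ off the origin. A crude comparison $h_w\asymp_r h_0$ would lose a multiplicative constant in front of $\alpha\log N$. The exact circle average is what reduces the discrepancy to an integrable $(1-s^2)^2\rho_\alpha$ tail.
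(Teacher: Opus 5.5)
Your proposal is correct and follows essentially the same route as the paper: you bound $\log|F_{N,\alpha}|^2$ via $\log(1-v)\le -v$ by the pseudohyperbolic defect, control its variance by $\alpha$ through M\"obius invariance, handle the mean by angular averaging, and conclude with the subharmonic mean inequality, Markov, and Montel. The only cosmetic difference is in the $(1-s^2)^2$ correction term: you control it by intensity domination and the identity $\int(1-|x|^2)^2\rho_\alpha\,dm=\alpha$, whereas the paper telescopes the second beta moments to get $\sum_j\E(1-Y_j)^2\le\alpha$, which bounds the same quantity.
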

\begin{proof}

For $z,w\in\D$, put
\begin{equation}\label{eq:bounded-defect}
 D_z(w)=\frac{(1-|z|^2)(1-|w|^2)}{|1-z\bar w|^2}
       =1-\left|\frac{z-w}{1-z\bar w}\right|^2,
 \qquad Q_N(z)=\sum_{x\in\Xi_{N,\alpha}}D_z(x).
\end{equation}
Since $\log(1-v)\le-v$ for $0\le v<1$, the finite product gives
\begin{equation}\label{eq:defect-log-bound}
 \log|F_{N,\alpha}(z)|^2\le\alpha\log N-Q_N(z).
\end{equation}
At a zero the left side is $-\infty$. The geometric invariance of the
pseudohyperbolic distance and $dm(w)/(1-|w|^2)^2$ gives
\[
 \int_\D D_z(w)^2d\nu_\alpha(w)
 =\int_\D(1-|w|^2)^2d\nu_\alpha(w)=\alpha.
\]
Consequently, intensity domination and Lemma~\ref{lem:variance} yield
$\Var Q_N(z)\le\alpha$, uniformly in both $N$ and $z$.

It remains to estimate the mean. Write $t=|z|^2$. Angular averaging gives
\[
 \frac1{2\pi}\int_0^{2\pi}D_z(\sqrt y e^{i\theta})\,d\theta
 =\frac{(1-t)(1-y)}{1-ty}
 =(1-y)-\frac{t(1-y)^2}{1-ty}.
\]
The radial intensity and Lemma~\ref{lem:radial}, with
$Y_j\sim\operatorname{Beta}(j,\alpha)$, give
\begin{equation}\label{eq:defect-mean}
 \E Q_N(z)=\sum_{j=1}^N\frac{\alpha}{j+\alpha}
       -t\sum_{j=1}^N\E\frac{(1-Y_j)^2}{1-tY_j}.
\end{equation}
The second beta moment telescopes:
\[
 \sum_{j=1}^N\E(1-Y_j)^2
 =\alpha(\alpha+1)\sum_{j=1}^N
       \left(\frac1{j+\alpha}-\frac1{j+\alpha+1}\right)
 \le\alpha.
\]
Since $0<\sum_{j=1}^Nj^{-1}-\log N\le1$, for every $s<1$ we obtain
\[
 \sup_N\sup_{|z|\le s}|\E Q_N(z)-\alpha\log N|
 \le b_{\alpha,s}:=\alpha+\frac{\alpha^2\pi^2}{6}
                              +\frac{\alpha s^2}{1-s^2}.
\]
Together with~\eqref{eq:defect-log-bound} and the variance bound, this gives
\begin{equation}\label{eq:log-plus-bound}
 \sup_N\sup_{|z|\le s}\E\log^+|F_{N,\alpha}(z)|
                  \le\tfrac12(b_{\alpha,s}+\sqrt\alpha)<\infty.
\end{equation}

For $r<s$, the mean inequality for the nonnegative subharmonic function
$u_N=\log^+|F_{N,\alpha}|$ gives
\[
 \sup_{|z|\le r}u_N(z)
       \le C_{r,s}\int_{D_s}u_N(w)dm(w).
\]
Taking expectations and applying Markov's inequality proves
\eqref{eq:tightness-rate}. Along a countable disk exhaustion, choose
successive bounds whose exceptional probabilities sum to less than a
given $\varepsilon$. The analytic functions satisfying these bounds form
a compact set by Montel's theorem. This proves tightness in $\Hol$.
\end{proof}

\subsection{Joint basepoint and logarithmic coordinates}
\label{subsec:unweighted-coordinates}

Set $q_0(w)=-\log|w|$ for $w\ne0$ and $q_0(0)=0$. Direct integration gives
\begin{equation}\label{eq:basepoint-green-norm}
 \int_\D q_0^2d\nu_\alpha
 =\frac\alpha4\int_0^1\frac{(\log t)^2}{(1-t)^2}\,dt
 =\frac{\alpha\pi^2}{12}.
\end{equation}
Let $T_{0,N}=\X_{\Xi_{N,\alpha}}(q_0)$. Taking the logarithm at the
origin and then its expectation gives
\begin{equation}\label{eq:cN-green}
 \log|F_{N,\alpha}(0)|=c_{N,\alpha}-T_{0,N},\qquad
 c_{N,\alpha}=\frac\alpha2\log N-\int_\D q_0\rho_{N,\alpha}\,dm.
\end{equation}
This is the constant in~\eqref{eq:cN}. For $k\ge1$, set
$h_k(w)=\bar w^k-w^{-k}$ for $w\ne0$, with $h_k(0)=0$.
Expanding the normalized logarithm of each finite factor gives
\begin{equation}\label{eq:finite-jet-sum}
 L_k(F_{N,\alpha})=\sum_{x\in\Xi_{N,\alpha}}h_k(x).
\end{equation}

\begin{lemma}\label{lem:unweighted-coordinates}
Fix $R_n=1-2^{-4n}$. There are total Borel maps
$T_0:\Conf\to\R$ and $J_k:\Conf\to\C$, $k\ge1$, such that, for
$\Xi\sim H_\alpha$, almost surely
\begin{align}
 T_0(\Xi)&=\lim_{n\to\infty}
       \left\{\sum_{\substack{x\in\Xi\\|x|<R_n}}q_0(x)
                       -\int_{D_{R_n}}q_0\,d\nu_\alpha\right\},
                       \label{eq:canonical-basepoint}\\
 J_k(\Xi)&=\lim_{n\to\infty}
                 \sum_{\substack{x\in\Xi\\|x|<R_n}}h_k(x),
                 \qquad k\ge1.\label{eq:unweighted-jet-limit}
\end{align}
The first variable is the $L^2$ completion $\X_\Xi(q_0)$. For every
fixed $K$, jointly with the configuration,
\begin{equation}\label{eq:unweighted-joint-limit}
 (\Xi_{N,\alpha},T_{0,N},L_1(F_{N,\alpha}),\ldots,L_K(F_{N,\alpha}))
 \Longrightarrow(\Xi,T_0(\Xi),J_1(\Xi),\ldots,J_K(\Xi)).
\end{equation}
If $(F_{N,\alpha},\Xi_{N,\alpha})\Longrightarrow(F,\Xi)$ along a
subsequence, the same convergence holds with the function coordinate
retained on both sides.
\end{lemma}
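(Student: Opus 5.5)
We define the maps by sharp radial cutoffs and prove that the truncated statistics converge, both for $H_\alpha$ and uniformly over the finite ensembles. For a configuration $\xi$ we let $T_0(\xi)$ and $J_k(\xi)$ be the limits in \eqref{eq:canonical-basepoint} and \eqref{eq:unweighted-jet-limit} when they exist, and $0$ otherwise. Each truncated sum is a Borel function of $\xi$, because $\xi$ a.s. charges no fixed circle and the sum is a measurable functional on $\Conf$. The set where a sequence of Borel functions converges is Borel, so these total maps are Borel. It remains to show almost sure convergence under $H_\alpha$ and the joint limit \eqref{eq:unweighted-joint-limit}.

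\emph{Basepoint.} Write $q_0^{(n)}=q_0\ind_{D_{R_n}}$. Then the bracket in \eqref{eq:canonical-basepoint} equals $\X_\Xi(q_0^{(n)})$. Lemma~\ref{lem:variance} and \eqref{eq:basepoint-green-norm} give
\[
 \E|\X_\Xi(q_0)-\X_\Xi(q_0^{(n)})|^2\le\int_{|w|\ge R_n}q_0^2\,d\nu_\alpha .
\]
Near $|w|=1$ we have $q_0\approx 1-|w|$ and $\rho_\alpha\approx(1-|w|)^{-2}$, so this tail is $O(1-R_n)=O(2^{-4n})$. This is summable, so Borel--Cantelli gives almost sure convergence to the $L^2$ completion $\X_\Xi(q_0)$. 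The same bound holds for $\Xi_{N,\alpha}$ uniformly in $N$, using \eqref{eq:intensity-domination}. The centering for $\Xi_{N,\alpha}$ uses $\rho_{N,\alpha}$ instead of $\rho_\alpha$, but on $D_{R_n}$ the difference of the two integrals tends to $0$ as $N\to\infty$ by dominated convergence. The singularity of $q_0$ at $0$ is integrable against bounded densities.

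\emph{Jets.} Split $h_k=\bar w^k-w^{-k}$ into an interior part on $|w|<\tfrac12$ and a boundary part. On the boundary annulus $h_k$ is bounded and $|h_k(w)|\le C_k(1-|w|)$, so $h_k\in L^2(\nu_\alpha)$ there. The centered boundary tail is therefore controlled exactly as for the basepoint. For the means we use rotation invariance: the intensity is radial and $\int_0^{2\pi} h_k(re^{i\theta})\,d\theta=0$. Hence the truncated statistic equals its centered version, for the limit and for every $N$ alike. This gives $J_k(\Xi)=\X_\Xi(h_k\ind_{1/2\le|w|})+\sum_{|x|<1/2}h_k(x)$ almost surely. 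The interior sum is a finite sum of a function continuous away from $0$.

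\emph{Joint convergence.} This is the main obstacle, because $h_k$ and $q_0$ are singular at $0$ and so the truncated functionals are not vaguely continuous. For fixed $n$ and $\delta>0$, replace the symbols on $D_\delta$ by continuous bounded modifications. The resulting truncated statistics are continuous functions of the configuration away from configurations charging $|w|=R_n$, which is a null event. By Lemma~\ref{lem:local-convergence} and the continuous mapping theorem, they converge jointly with $\Xi_{N,\alpha}$; the centering constants converge as noted above.

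The modification changes the statistics only on the event that a point lies in $D_\delta$, plus a deterministic mean correction for $q_0$ of order $\delta^2\log(1/\delta)$. The probability of that event is at most $\int_{D_\delta}\rho_\alpha\,dm=O(\delta^2)$, uniformly in $N$. Apply Lemma~\ref{lem:joint-approximation} twice: first let $\delta\downarrow0$, then remove the radial cutoff $n\to\infty$ using the uniform $L^2$ tail bounds. The finite sums over $\Xi_{N,\alpha}$ with $|x|<R_n$ approximate $T_{0,N}$ and $L_k(F_{N,\alpha})$ by \eqref{eq:cN-green} and \eqref{eq:finite-jet-sum}, with the error controlled in probability uniformly in $N$. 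The limiting truncations converge almost surely to $T_0(\Xi)$ and $J_k(\Xi)$, as shown above.

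Finally, if $(F_{N,\alpha},\Xi_{N,\alpha})\Rightarrow(F,\Xi)$ along a subsequence, we run the same argument with $W_n=F_{N,\alpha}$ retained as the untouched coordinate in Lemma~\ref{lem:joint-approximation}. The cutoff statistics depend only on the configuration, which yields the version with the function coordinate retained on both sides.
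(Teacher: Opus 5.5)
Your proposal is correct and follows essentially the same route as the paper. The shared steps are: sharp radial cutoffs along $R_n$ with the variance contraction and Borel--Cantelli for the almost sure limits; the zero angular mean of $h_k$ on annuli; the small-ball bound $\Prob\{\Xi_{N,\alpha}(D_\delta)>0\}\le\nu_\alpha(D_\delta)$ to neutralize the singularity at the origin; vague continuity off the cutoff circles; and Lemma~\ref{lem:joint-approximation} with the function coordinate retained. The only difference is cosmetic: for $q_0$ the paper approximates directly by continuous compactly supported symbols in $L^2(\nu_\alpha)$ via the variance contraction, whereas you reuse the small-ball event plus a deterministic $O(\delta^2\log(1/\delta))$ centering correction, and both work.
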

\begin{proof}
For $R\ge1/2$, $q_0(w)\le C(1-|w|^2)$ on $|w|>R$. Hence
\[
 \int_{|w|>R}q_0(w)^2d\nu_\alpha(w)\le C_\alpha(1-R^2).
\]
The variance contraction proves $L^2$ convergence of the centered radial
sums to $\X_\Xi(q_0)$. The squared errors along $(R_n)$ are summable,
so Chebyshev's inequality and Borel--Cantelli give the almost sure limit
in~\eqref{eq:canonical-basepoint}.

Fix $r_0\in(0,1)$. On $|w|>r_0$, one has
$|h_k(w)|\le C_{k,r_0}(1-|w|^2)$. Every radial annulus has zero mean
for this symbol. Lemma~\ref{lem:variance} therefore gives, for
$r_0<R<S<1$,
\begin{equation}\label{eq:unweighted-jet-tail}
 \E\left|\sum_{\substack{x\in\Xi\\R<|x|<S}}h_k(x)\right|^2
 \le C_{\alpha,k,r_0}(S^2-R^2).
\end{equation}
The same estimate holds uniformly for the finite processes. The exterior
radial sums are Cauchy in $L^2$. Their squared errors at $R_n$ are
summable, giving an almost sure limit along that sequence. The interior
contribution is a literal finite sum, since the configuration is locally
finite and has no point at zero. It need not have a second moment.
Intersecting over $k$ gives~\eqref{eq:unweighted-jet-limit} on one event
of probability one. The convergence set of these scalar Borel sums is
Borel. Use the limits on this set, excluding configurations containing
zero or a point on an $R_n$-circle, and assign zero to all maps on its
complement.

To prove joint convergence, approximate $q_0$ by continuous compactly
supported symbols in $L^2(\nu_\alpha)$. The variance contraction and
intensity domination control the approximation errors uniformly in $N$.
For a jet, first restrict $h_k$ to $\varepsilon<|w|<R$. The truncated
symbol is bounded and compactly supported; its sum over a configuration
is continuous at configurations with no point on the two boundary
circles. The exterior error tends to zero in probability uniformly in
$N$ by~\eqref{eq:unweighted-jet-tail}. At the origin use instead
\begin{equation}\label{eq:jet-origin-cutoff}
 \sup_N\Prob\{\Xi_{N,\alpha}(D_\varepsilon)>0\}
 \le\nu_\alpha(D_\varepsilon)\le C_\alpha\varepsilon^2,
 \qquad 0<\varepsilon\le1/2,
\end{equation}
and the identical bound for $\Xi$. On the complement of this event
the interior cutoff changes none of the jets. Thus the same cutoffs
control any fixed finite block without an integrability assumption on
the interior singular sums. Vague convergence, local convergence of the
intensities, and Lemma~\ref{lem:joint-approximation} prove
\eqref{eq:unweighted-joint-limit}. That lemma also retains any convergent
auxiliary function coordinate throughout the cutoff argument.
\end{proof}

\subsection{Proof of Theorem~\ref{thm:realization}}
\label{subsec:main-construction-proof}

\begin{proof}
By Proposition~\ref{prop:finite-tightness} and
Lemma~\ref{lem:local-convergence}, the joint function--configuration laws
are tight. Let $(F,\Xi)$ be a joint subsequential limit. Then
$\Xi\sim H_\alpha$, and Proposition~\ref{prop:scalar} and continuous
evaluation give $|F(0)|^2\overset d=X_\alpha>0$ almost surely. The
continuity of the zero-divisor map now gives $Z(F)=\Xi$.
Lemma~\ref{lem:unweighted-coordinates}, with the function coordinate
retained, and the continuous logarithmic coordinate maps imply
\begin{equation}\label{eq:cluster-coordinates}
 \log|F(0)|=c_\alpha-T_0(\Xi),\qquad
 L_k(F)=J_k(\Xi),\quad k\ge1,
\end{equation}
on one event of probability one.

These coordinates determine a Borel normalized analytic shape. To make
this assertion explicit, apply recursion~\eqref{eq:taylor-recursion}
with $c_0=1$ and $L_k=J_k(\xi)$ to define scalar Borel coefficients
$c_n(\xi)$. The condition
\[
 \sum_{n\ge0}|c_n(\xi)|r^n<\infty
 \quad\text{for every rational }r\in(0,1)
\]
defines a Borel set, on which the Taylor series is a Borel
$\Hol$-valued map $P_\xi$. For any cluster limit, this series is the
Taylor series of $F/F(0)$, so the set has full $H_\alpha$ measure and
$Z(P_\Xi)=\Xi$ almost surely. Intersect it with the common convergence
set in Lemma~\ref{lem:unweighted-coordinates} and with the Borel
condition $Z(P_\xi)=\xi$. Denote the resulting full set by
$\mathscr G_\alpha$. On its complement set $P_\xi\equiv1$ and
$T_0(\xi)=J_k(\xi)=0$. These conventions change no limiting law.
This construction depends only on the coordinate maps and is therefore
the same for every subsequence. Subsequent restrictions of
$\mathscr G_\alpha$ by Borel null sets will preserve these versions
almost surely and all the asserted identities of laws.

Writing $A_\alpha(\xi)=e^{c_\alpha-T_0(\xi)}P_\xi$, we have
\begin{equation}\label{eq:cluster-shape}
 F=UA_\alpha(\Xi),\qquad U=F(0)/|F(0)|.
\end{equation}
It remains to identify the conditional phase. The finite ensemble is
rotation invariant, and its product satisfies the exact identity
\begin{equation}\label{eq:finite-rotation}
 (F_{N,\alpha},\Xi_{N,\alpha})\overset d=
 \bigl(e^{iNt}F_{N,\alpha}(e^{-it}\,\cdot),e^{it}\Xi_{N,\alpha}\bigr),
 \qquad t\in\R.
\end{equation}
Fix $\theta\in\R$ and take $t=\theta/N$. The map
\begin{equation}\label{eq:rotation-map}
 (s,f,\xi)\longmapsto
       \bigl(e^{i\theta}f(e^{-i\theta s}\,\cdot),e^{i\theta s}\xi\bigr)
\end{equation}
is jointly continuous at $s=0$. For the measure coordinate, test against
compactly supported continuous functions: small rotations keep the
supports in one compact set, where vague convergence bounds local mass.
The continuous mapping theorem applied to
$(1/N,F_{N,\alpha},\Xi_{N,\alpha})$ yields
\begin{equation}\label{eq:joint-phase-invariance}
 (F,\Xi)\overset d=(e^{i\theta}F,\Xi)
 \qquad\text{for each }\theta.
\end{equation}
Thus $(U,\Xi)\overset d=(e^{i\theta}U,\Xi)$. For bounded Borel
functions $a$ on $\T$ and $b$ on $\Conf$, integration in $\theta$
and Fubini's theorem give
\begin{equation}\label{eq:haar-average}
 \E[a(U)b(\Xi)]
       =\E b(\Xi)\int_\T a(v)\,d\Haar(v).
\end{equation}
Hence $U$ is Haar and independent of the entire configuration. No common
samplewise event indexed by all rotations is required.

Every joint cluster law is consequently the law of
$(UA_\alpha(\Xi),\Xi)$ for independent $\Xi\sim H_\alpha$ and
$U\sim\Haar(\T)$. Tightness proves full-sequence convergence and
defines $F_\alpha$ with this law. Equations~\eqref{eq:cluster-coordinates}
and~\eqref{eq:cluster-shape} give the actual-divisor reconstruction and
the converse sampling statement. Finally, Proposition~\ref{prop:scalar}
identifies the basepoint Mellin transform, including its maximal domain.
\end{proof}

\begin{corollary}\label{cor:finite-jets}
For every fixed $K\ge1$,
\begin{equation}\label{eq:main-jets}
 (F_{N,\alpha}(0),L_1(F_{N,\alpha}),\ldots,L_K(F_{N,\alpha}))
 \Longrightarrow(F_\alpha(0),L_1(F_\alpha),\ldots,L_K(F_\alpha)).
\end{equation}
The phase $F_\alpha(0)/|F_\alpha(0)|$ is Haar and independent of
$(|F_\alpha(0)|,L_1(F_\alpha),\ldots,L_K(F_\alpha))$.
\end{corollary}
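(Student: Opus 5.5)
The plan is to obtain the corollary from the full-sequence joint convergence $(F_{N,\alpha},\Xi_{N,\alpha})\Longrightarrow(F_\alpha,Z(F_\alpha))$ of Theorem~\ref{thm:realization}(i), using Lemma~\ref{lem:analytic-coordinates} to pass to coordinates. The map $f\mapsto(f(0),L_1(f),\ldots,L_K(f))$ is defined and continuous on the open set $\{f(0)\ne0\}$. It is the composition of the continuous Taylor coefficient maps $a_0,\ldots,a_K$ with the continuous triangular map, whose denominators are powers of $a_0$. Since $F_\alpha(0)\ne0$ almost surely, the limit law gives zero mass to the closed set $\{f(0)=0\}$, which is the discontinuity set of this map. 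The finite functions also have $F_{N,\alpha}(0)\ne0$ almost surely, because $\Xi_{N,\alpha}$ has no point at the origin. The continuous mapping theorem, in the version allowing a discontinuity set of limit measure zero, then gives \eqref{eq:main-jets} directly.

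Alternatively, one can invoke Lemma~\ref{lem:unweighted-coordinates}. Together with \eqref{eq:cN-green} and $c_{N,\alpha}\to c_\alpha$ from \eqref{eq:cN}, it gives joint convergence of $(\log|F_{N,\alpha}(0)|,L_k(F_{N,\alpha}))$. The first route is cleaner, however, and also retains the phase coordinate.

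For the independence claim, use the representation from Theorem~\ref{thm:realization}(ii), which holds on one event of probability one. There $|F_\alpha(0)|=e^{c_\alpha-T_0(\Xi)}$, and $L_k(F_\alpha)=L_k(P_\Xi)=J_k(\Xi)$, because logarithmic jets are unchanged by constant multiples. The relation \eqref{eq:cluster-coordinates} gives the same identification directly. Hence the vector $(|F_\alpha(0)|,L_1(F_\alpha),\ldots,L_K(F_\alpha))$ is a Borel function of $\Xi=Z(F_\alpha)$. By Theorem~\ref{thm:realization}(ii), the phase $U=F_\alpha(0)/|F_\alpha(0)|$ is Haar and independent of $\Xi$, so it is independent of any Borel function of $\Xi$. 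This proves the second assertion.

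There is no real obstacle here. The only point needing care is the continuous-mapping step, where the relevant map is continuous only off $\{f(0)=0\}$. That set is closed in $\Hol$ because $a_0$ is continuous, and it is null for the limit law by Theorem~\ref{thm:realization}(i). We should also check that the branch-independence of $L_k$ noted after \eqref{eq:jet-definition} makes the coordinate map single-valued, which it does.
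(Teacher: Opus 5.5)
Your proposal is correct and follows the paper's proof. Convergence comes from the continuous mapping theorem applied to the triangular coordinate map of Lemma~\ref{lem:analytic-coordinates} at the almost surely nonzero limiting basepoint, and independence holds because the modulus and jets are Borel functions of the divisor by \eqref{eq:cluster-coordinates}, while the phase is Haar and independent of $\Xi$ by \eqref{eq:haar-average}.
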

\begin{proof}
Apply Lemma~\ref{lem:analytic-coordinates} at the nonzero limiting
basepoint. The modulus and jets are functions of the divisor by
\eqref{eq:cluster-coordinates}, so~\eqref{eq:haar-average} proves the
independence assertion.
\end{proof}

\subsection{Green estimates and the finite modulus identity}
\label{subsec:green-estimates}

We now extend the basepoint statistic to other centers. These identities
will give the explicit radial product and the geometric transport law.

For $z\in\D$, define the Green symbol
\begin{equation}\label{eq:green-symbol}
 q_z(w)=-\log\left|\frac{z-w}{1-\bar zw}\right|,\qquad w\ne z.
\end{equation}
For the purpose of integration, set $q_z(z)=0$. Also write
\begin{equation}\label{eq:green-drift}
 I_\alpha(z)=-\frac\alpha2\log(1-|z|^2),
 \qquad d_\alpha(z)=c_\alpha+I_\alpha(z).
\end{equation}
The estimates below show that $q_z\in L^2(\nu_\alpha)$. Thus
Lemma~\ref{lem:variance} defines
$T_z^{(2)}(\Xi)=\X_\Xi(q_z)$ for $\Xi\sim H_\alpha$, as an $L^2$
random variable. The superscript distinguishes this completion from the
jointly measurable, all-center version constructed below.

For the finite process let
\begin{equation}\label{eq:finite-centered-green}
 T_{z,N}=\X_{\Xi_{N,\alpha}}(q_z)
       =\sum_{x\in\Xi_{N,\alpha}}q_z(x)
             -\int_\D q_z(w)\rho_{N,\alpha}(w)dm(w).
\end{equation}
The finite intensity has total mass $N$, so the integrability supplied by
the $L^2$ bound also justifies this literal centered finite sum.

\begin{lemma}\label{lem:green-estimates}
For every $z\in\D$,
\begin{equation}\label{eq:green-norm}
 \int_\D q_z(w)^2d\nu_\alpha(w)=\frac{\alpha\pi^2}{12}.
\end{equation}
For each compact $B\Subset\D$ and all $R$ sufficiently close to $1$,
\begin{equation}\label{eq:green-tail}
 \sup_{z\in B}\int_{|w|>R}q_z(w)^2d\nu_\alpha(w)
                   \le C_{\alpha,B}(1-R^2).
\end{equation}
The map $z\mapsto q_z$ is continuous from $\D$ into $L^2(\nu_\alpha)$.
\end{lemma}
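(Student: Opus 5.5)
The plan is to reduce all three assertions to the case $z=0$ using Möbius invariance. Let $\phi_z(w)=(z-w)/(1-\bar z w)$. This is an involutive automorphism of $\D$ with $q_z=q_0\circ\phi_z$. The measure $\nu_\alpha$ is a constant multiple of hyperbolic area, so it is $\phi_z$-invariant; this is the same invariance already used for $D_z$ in the proof of Proposition~\ref{prop:finite-tightness}. Change of variables therefore gives
\[
 \int_\D q_z^2\,d\nu_\alpha=\int_\D q_0^2\,d\nu_\alpha=\frac{\alpha\pi^2}{12}
\]
by \eqref{eq:basepoint-green-norm}, which proves \eqref{eq:green-norm}. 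The convention $q_z(z)=0$ concerns a single point and is irrelevant.

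For the tail bound \eqref{eq:green-tail}, fix $B\subset\overline{D_\rho}$ with $\rho<1$ and take $R\ge(1+\rho)/2$. For $z\in B$ and $|w|>R$ we have
\[
 1-|\phi_z(w)|^2=D_z(w)=\frac{(1-|z|^2)(1-|w|^2)}{|1-\bar z w|^2}
 \le\frac{1-|w|^2}{1-\rho^2}\cdot\frac{1}{1}\cdot\frac{(1-\rho^2)}{(1-\rho)^2},
\]
so $D_z(w)\le C_\rho(1-|w|^2)$. Once $R$ is close enough to $1$, this forces $|\phi_z(w)|\ge1/2$ uniformly. Then $-\log s\le C(1-s^2)$ on $[1/2,1)$ gives $q_z(w)\le C_B(1-|w|^2)$ there. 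Hence
\[
 \int_{|w|>R}q_z^2\,d\nu_\alpha\le C_B\frac{\alpha}{\pi}\int_{|w|>R}dm=C_{\alpha,B}(1-R^2).
\]
This is exactly the argument already used for $q_0$ in Lemma~\ref{lem:unweighted-coordinates}.

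For continuity of $z\mapsto q_z$ in $L^2(\nu_\alpha)$, let $z_j\to z$ inside a compact $B$. Split $\D$ into three pieces:
\begin{enumerate}
\item the annulus $|w|>R$, which contributes at most $4C_{\alpha,B}(1-R^2)$ uniformly by \eqref{eq:green-tail};
\item a small disk $|w-z|<\delta$, where $q_{z_j}$ and $q_z$ have logarithmic singularities;
\item the remaining compact set.
\end{enumerate}
On the compact set, $q_{z_j}\to q_z$ uniformly once $z_j$ stays at distance less than $\delta/2$ from $z$, since $(z,w)\mapsto q_z(w)$ is continuous off the diagonal. On the small disk, $\rho_\alpha$ is bounded there, and $\int_{|w-z|<\delta}\log^2|z_j-w|\,dm(w)\to0$ as $\delta\to0$ uniformly in $j$, by local square-integrability of $\log|\cdot|$. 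The bounded factor $\log|1-\bar z w|$ is harmless. The logarithmic singularity near the diagonal is the only delicate point, and uniform local integrability of $\log^2$ handles it. Letting $j\to\infty$, then $\delta\to0$ and $R\to1$, finishes the proof.
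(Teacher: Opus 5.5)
Your proof is correct and follows the paper's argument essentially step by step: you use M\"obius invariance of $\nu_\alpha$ and of the pseudohyperbolic distance for \eqref{eq:green-norm}, then the bound $q_z(w)\le C_B(1-|w|^2)$ coming from the identity for $D_z$ to cancel the boundary factor in $\rho_\alpha$, and finally a split into a boundary annulus, a small disk about the singularities, and a compact remainder for continuity. The only differences are cosmetic: you excise a single disk about the limit center that contains both singularities, whereas the paper removes neighborhoods of the two nearby centers; and your displayed bound on $D_z(w)$ simplifies to $(1-|w|^2)/(1-\rho)^2$, which is exactly what is needed.
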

\begin{proof}
The pseudohyperbolic distance and $dm(w)/(1-|w|^2)^2$ are invariant under
disk automorphisms. Moving $z$ to zero therefore gives
\[
 \int q_z^2d\nu_\alpha=\int q_0^2d\nu_\alpha
                         =\frac{\alpha\pi^2}{12}
\]
by~\eqref{eq:basepoint-green-norm}. The geometric
invariance used here also follows directly from
\begin{equation}\label{eq:pseudohyperbolic}
 1-\left|\frac{z-w}{1-\bar zw}\right|^2
       =\frac{(1-|z|^2)(1-|w|^2)}{|1-\bar zw|^2}.
\end{equation}
For $z\in B$ and $|w|$ near $1$, this identity implies
$q_z(w)\le C_B(1-|w|^2)$. Its square cancels the boundary factor in
$\rho_\alpha$, proving~\eqref{eq:green-tail}.

On a fixed compact subdisk, the intensity is bounded. The contribution
from an $\varepsilon$-disk about the center is bounded uniformly for
centers in that subdisk by
\[
 C\int_0^\varepsilon(1+|\log r|^2)r\,dr
       \le C'\varepsilon^2(1+|\log\varepsilon|^2).
\]
To prove continuity in the center, first remove a boundary annulus using
\eqref{eq:green-tail}, then remove small neighborhoods of the two nearby
centers using this estimate. On the remaining compact set the symbols
converge uniformly. Letting the two removed contributions tend to zero
proves the assertion.
\end{proof}

The deterministic part of the finite product can be computed by angular
averaging rather than by a separate expansion of its kernel.

\begin{lemma}\label{lem:finite-modulus}
For $0<r<1$,
\begin{equation}\label{eq:angular-mean}
 \frac1{2\pi}\int_0^{2\pi}(q_0-q_z)(re^{it})\,dt
       =\log\frac{\max\{|z|,r\}}{r}.
\end{equation}
Thus, for $R>|z|$,
\begin{equation}\label{eq:mean-drift}
 \int_{D_R}(q_0-q_z)\,d\nu_\alpha=I_\alpha(z).
\end{equation}
Let
\begin{equation}\label{eq:finite-drift}
 I_{N,\alpha}(z)=\int_\D(q_0-q_z)(w)\rho_{N,\alpha}(w)dm(w).
\end{equation}
Then $0\le I_{N,\alpha}\le I_\alpha$ and
$I_{N,\alpha}\to I_\alpha$ locally uniformly. For every $z$ outside the
finite configuration,
\begin{equation}\label{eq:finite-modulus}
 \log|F_{N,\alpha}(z)|
       =c_{N,\alpha}+I_{N,\alpha}(z)-T_{z,N},
\end{equation}
with $c_{N,\alpha}$ as in~\eqref{eq:cN-green}.
\end{lemma}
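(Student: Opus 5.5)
The plan is to prove the three assertions in order: the angular identity, then the drift identities and bounds, then the finite modulus formula.

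For \eqref{eq:angular-mean}, note that $q_0(w)-q_z(w)=\log|z-w|-\log|1-\bar zw|-\log|w|$. Averaging over the circle $|w|=r$, Jensen's formula (equivalently, the mean of $\log|a-w|$ over the circle equals $\log\max\{|a|,r\}$) gives $\log\max\{|z|,r\}$ for the first term. The term $\log|1-\bar zw|$ averages to $0$, since $1-\bar zw$ is zero-free and harmonic-log on $|w|\le r<1$. The last term contributes $-\log r$. At $z=0$ both sides vanish, consistent with the conventions $q_0(0)=q_z(z)=0$, which affect only null sets.

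For \eqref{eq:mean-drift}, integrate in polar coordinates against the radial density $\rho_\alpha$. The integrand's angular mean vanishes for $r>|z|$, so
\[
 \int_{D_R}(q_0-q_z)\,d\nu_\alpha=\int_0^{|z|}\log\frac{|z|}{r}\,\frac{2\alpha r\,dr}{(1-r^2)^2}.
\]
Integration by parts, or the substitution $t=r^2$, should yield $-\frac\alpha2\log(1-|z|^2)$. Indeed, with $t=r^2$ and $s=|z|^2$ this is $\frac\alpha2\int_0^s\log(s/t)\,(1-t)^{-2}dt$. Integrating by parts using the antiderivative $t/(1-t)$ (chosen to vanish at $0$, so the boundary terms are zero) gives $\frac\alpha2\int_0^s\frac{dt}{1-t}=I_\alpha(z)$. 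Since $q_0,q_z\in L^2(\nu_\alpha)$ and both are integrable near the boundary by the tail estimate, the integral over all of $\D$ is defined.

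The same radial computation applies with $\rho_{N,\alpha}$, which is radial because the finite kernel is a series in $|w|^2$. This gives $I_{N,\alpha}(z)=\int_0^{|z|}\log(|z|/r)\,2\pi r\rho_{N,\alpha}(r)\,dr$. The integrand is nonnegative, and $\rho_{N,\alpha}\le\rho_\alpha$ by \eqref{eq:intensity-domination}, so $0\le I_{N,\alpha}\le I_\alpha$. Monotone convergence gives pointwise convergence. Local uniformity follows because both expressions depend on $|z|$ alone, are nondecreasing in $|z|$, and are continuous, so Dini's theorem applies on compact subintervals $[0,s]$. Alternatively one can bound the difference by $\int_{D_s}\log(s/|w|)(\rho_\alpha-\rho_{N,\alpha})\,dm\to0$.

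For \eqref{eq:finite-modulus}, write $\log|F_{N,\alpha}(z)|=\frac\alpha2\log N-\sum_{x}q_z(x)$. Add and subtract $\int q_z\rho_{N,\alpha}\,dm$ and use $\int q_z\rho_{N,\alpha}=\int q_0\rho_{N,\alpha}-I_{N,\alpha}(z)$. Together with \eqref{eq:cN-green} this gives exactly $c_{N,\alpha}+I_{N,\alpha}(z)-T_{z,N}$. The only point requiring care, and the step I expect to be the main (though minor) obstacle, is justifying the separation of the integrals. This needs $q_0$ and $q_z$ to be integrable against $\rho_{N,\alpha}$. That holds because $\rho_{N,\alpha}\,dm$ is a finite measure and each $q$ lies in $L^2(\nu_\alpha)\subset L^1(\rho_{N,\alpha}dm)$ by Cauchy--Schwarz; alternatively, $\rho_{N,\alpha}$ is bounded near $0$ and near $z$, and the logarithmic singularities are integrable there.
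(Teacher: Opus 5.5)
Your proof is correct and is essentially the paper's argument: circle means of $\log|z-w|$ and $\log|1-\bar zw|$, the radial reduction of \eqref{eq:mean-drift} (your integration by parts with antiderivative $t/(1-t)$ is equivalent to the paper's check that the derivative in $s=|z|$ is $\alpha s/(1-s^2)$), the domination $\rho_{N,\alpha}\le\rho_\alpha$ for the bounds and for local uniformity, and the add-and-subtract step with \eqref{eq:cN-green}, justified by the finite mass of $\rho_{N,\alpha}\,dm$. You should delete one false aside, the claim that $\int_\D(q_0-q_z)\,d\nu_\alpha$ is defined over the whole disk: for $z\ne0$, near the circle one has $|q_0-q_z|\rho_\alpha\approx\frac{\alpha}{2\pi}(1-|w|^2)^{-1}\bigl|1-\frac{1-|z|^2}{|1-\bar zw|^2}\bigr|$, so $q_0-q_z\notin L^1(\nu_\alpha)$ (membership in $L^2$ of an infinite measure gives no $L^1$ control), and \eqref{eq:mean-drift} holds only as the radial-cutoff identity the lemma states, as the paper itself stresses.
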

\begin{proof}
The angular means of $\log|z-re^{it}|$ and
$\log|1-\bar zre^{it}|$ are $\log\max\{|z|,r\}$ and zero,
respectively. This proves~\eqref{eq:angular-mean}. Setting $s=|z|$, the
integral in~\eqref{eq:mean-drift} is
\[
 2\alpha\int_0^s\frac{r\log(s/r)}{(1-r^2)^2}\,dr.
\]
It is zero at $s=0$ and its derivative is $\alpha s/(1-s^2)$, so it
is $I_\alpha(z)$. This is a radial-cutoff identity; absolute
integrability of $q_0-q_z$ against the infinite intensity is not needed.

All finite intensities are radial. Angular averaging and
\eqref{eq:intensity-domination} give the inequalities for $I_{N,\alpha}$.
For $|z|\le s<1$, the difference from $I_\alpha$ is bounded by
\[
 2\pi\int_0^s\log(s/r)
            (\rho_\alpha(r)-\rho_{N,\alpha}(r))r\,dr,
\]
which tends to zero by dominated convergence. The logarithm of the finite
product is $\frac\alpha2\log N-\sum_x q_z(x)$.
Adding and subtracting its mean, and using~\eqref{eq:cN-green},
proves~\eqref{eq:finite-modulus}.
\end{proof}

\subsection{Radial product and Green reconstruction}
\label{subsec:canonical-proof}

\begin{proposition}\label{prop:canonical}
After restricting $\mathscr G_\alpha$ by a Borel $H_\alpha$-null set,
the maps of Theorem~\ref{thm:realization} satisfy, for $\xi\in\mathscr G_\alpha$,
\begin{equation}\label{eq:canonical-product}
 P_\xi(z)=\lim_{n\to\infty}
       \prod_{\substack{x\in\xi\\|x|<R_n}}
                       \frac{1-z/x}{1-z\bar x}
                       \quad\text{in }\Hol,
 \qquad R_n=1-2^{-4n},
\end{equation}
and $T_0$ is given by~\eqref{eq:canonical-basepoint}.
On this set, $0\notin\xi$, $P_\xi(0)=1$, and $Z(P_\xi)=\xi$.
On its complement take $P_\xi\equiv1$ and $T_0(\xi)=J_k(\xi)=0$.
The total function
\begin{equation}\label{eq:canonical-green}
 T_z(\xi)=
 \begin{cases}
 T_0(\xi)-\log|P_\xi(z)|+I_\alpha(z),&z\notin\xi,\\
 0,&z\in\xi
 \end{cases}
\end{equation}
is jointly Borel and is continuous in $z$ off $\xi$. For every fixed
$z$, $T_z(\Xi)=T_z^{(2)}(\Xi)$ almost surely. In particular, if
\begin{equation}\label{eq:canonical-A}
 A_\alpha(\xi)=e^{c_\alpha-T_0(\xi)}P_\xi,
\end{equation}
then, on $\mathscr G_\alpha$, simultaneously for every $z\notin\xi$,
\begin{equation}\label{eq:canonical-modulus}
 \log|A_\alpha(\xi)(z)|=d_\alpha(z)-T_z(\xi).
\end{equation}
For all $k\ge1$, the logarithmic jets are the radial limits
\begin{equation}\label{eq:canonical-jets}
 L_k(P_\xi)=\lim_{n\to\infty}
     \sum_{\substack{x\in\xi\\|x|<R_n}}(\bar x^k-x^{-k}).
\end{equation}
The logarithmic derivative is given, locally uniformly off $\xi$, by
\begin{equation}\label{eq:canonical-connection}
 \frac{P_\xi'(z)}{P_\xi(z)}
 =\lim_{n\to\infty}\sum_{\substack{x\in\xi\\|x|<R_n}}
       \left(\frac1{z-x}+\frac{\bar x}{1-z\bar x}\right).
\end{equation}
\end{proposition}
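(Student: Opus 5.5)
The plan is to compare the radial truncated products with $P_\xi$, using the Green statistics of Lemma~\ref{lem:green-estimates} and the finite modulus identity of Lemma~\ref{lem:finite-modulus}, and then to read off the logarithmic coordinates.

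For $\xi$ with $0\notin\xi$ and no point on the circles $|w|=R_n$, set
\[
 P_{\xi,n}(z)=\prod_{x\in\xi,\,|x|<R_n}\frac{1-z/x}{1-z\bar x}.
\]
This is a finite product, and its logarithmic jets are exactly the truncated sums $\sum_{|x|<R_n}h_k(x)$. By Lemma~\ref{lem:unweighted-coordinates} these converge to $J_k(\xi)$ on the common convergence set. The logarithmic modulus gives
\[
 \log|P_{\xi,n}(z)|=\sum_{|x|<R_n}(q_0-q_z)(x).
\]
Rewrite this as the centered radial sum for $q_0$, minus the centered radial sum for $q_z$, plus $\int_{D_{R_n}}(q_0-q_z)\,d\nu_\alpha$. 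By \eqref{eq:mean-drift} the last term equals $I_\alpha(z)$ once $R_n>|z|$.

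The key step is to obtain almost sure convergence of the centered radial sums for $q_z$, locally uniformly in $z$. The tail bound \eqref{eq:green-tail} combined with Lemma~\ref{lem:variance} gives a variance of order $1-R_n^2=2^{-4n}$ for each fixed $z$. To get uniformity I would not use a chaining argument in $z$. Instead I would exploit harmonicity. On a compact disk $B$, fix $m$ with $R_m>\sup_B|z|$. For $n>m$, the annular sums $\sum_{R_m\le|x|<R_n}q_z(x)$, centered, are harmonic in $z\in D_{R_m}$. The mean value property bounds their supremum on a smaller disk by the $L^1$ area average over a slightly larger disk. Fubini and Cauchy--Schwarz then bound the expected supremum of consecutive differences by $C2^{-2n}$, which is summable. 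Borel--Cantelli gives locally uniform convergence of $\log|P_{\xi,n}/P_{\xi,m}|$, or rather of the harmonic functions themselves.

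To upgrade this to convergence in $\Hol$ of the products, work with the zero-free factors $\prod_{R_m\le|x|<R_n}$ on $D_{R_m}$. Locally uniform convergence of their logarithms, with the normalization at $0$ fixed by the jets or the real part plus value $1$ at $0$, gives locally uniform convergence of the analytic logarithms via harmonic conjugates. Letting $m$ increase along a countable sequence of disks gives convergence in $\Hol$ to a limit $\tilde P_\xi$. Its Taylor jets are the limits $J_k(\xi)$, so $\tilde P_\xi=P_\xi$ by the recursion \eqref{eq:taylor-recursion}. This yields \eqref{eq:canonical-product} and \eqref{eq:canonical-jets}. Differentiating termwise, which is legitimate by locally uniform convergence off $\xi$, gives \eqref{eq:canonical-connection}.

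The exceptional configurations form a Borel null set: they are defined through countably many Borel sums and suprema over rational $z$, with continuity supplying the remaining $z$. Removing this set from $\mathscr G_\alpha$ is the restriction described in the statement. Passing to the limit in the decomposition of $\log|P_{\xi,n}(z)|$ gives $\log|P_\xi(z)|=T_0(\xi)+I_\alpha(z)-\lim_n(\text{centered radial sum for }q_z)$. Hence $T_z(\xi)$ as defined in \eqref{eq:canonical-green} equals that limit. By the $L^2$ convergence from Lemma~\ref{lem:variance}, this limit agrees with $T^{(2)}_z(\Xi)$ almost surely for each fixed $z$. Joint Borel measurability follows because $T_z$ is a limit of jointly Borel functions, and continuity off $\xi$ follows because $\log|P_\xi|$ is harmonic there. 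Identity \eqref{eq:canonical-modulus} is then immediate from \eqref{eq:canonical-A} and $d_\alpha=c_\alpha+I_\alpha$.

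I expect the main obstacle to be the uniform-in-$z$ almost sure control of the annular Green sums. The harmonic mean-value bound must also be correctly centered, since the deterministic mean, being harmonic in $z$, has to be subtracted consistently.
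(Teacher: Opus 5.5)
Your proof is correct, and its overall structure is the paper's. Both use the exact radial identity \eqref{eq:radial-P-modulus} and summable exterior variance bounds along $R_n=1-2^{-4n}$ from Lemma~\ref{lem:variance}. Both then use a mean-value step for uniformity in $z$, identify the limit with $P_\xi$ through the jets and the Taylor recursion, and use the fixed-center $L^2$ limit to match $T_z$ with $T_z^{(2)}$.

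The real difference is the uniformity lemma. The paper (Lemma~\ref{lem:analytic-tail}) applies the variance contraction directly to the complex analytic symbol $\ell_z(w)=\Log(1-z/w)-\Log(1-z\bar w)$. Every Fourier mode of $\ell_z$ has zero angular mean, so no centering is needed. The subharmonic mean inequality plus Cauchy estimates then give $C^j$ bounds on the exterior logarithm in one step. You control only the real part $(q_0-q_z)(w)=\operatorname{Re}\ell_z(w)$, using harmonicity in the center. You then recover the analytic logarithm by the Schwarz formula (Borel--Carath\'eodory), normalized by $G_n(0)=0$. That step is valid.

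Your centering worry goes away. By \eqref{eq:angular-mean}, for $|z|<R_m\le|w|$ the angular mean of $q_z$ equals that of $q_0$. So the annular means do not depend on $z$ and cancel exactly in $\log|P_{\xi,n}/P_{\xi,m}|$. The two routes are thus the real-part and full-complex versions of one estimate.

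Your route reuses the existing Green estimates and needs no new symbol, but costs an extra harmonic-conjugate step. The paper's route gives derivative control directly, and its lemma holds uniformly for the finite ensembles. Your pathwise all-center convergence of the centered Green sums is not an extra gain: it follows in the paper's route too, since \eqref{eq:radial-P-modulus} is exact once the products converge.

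One minor point concerns joint Borel measurability of \eqref{eq:canonical-green}. Your description as a limit of jointly Borel functions covers only the good set off $\xi$. The paper's argument is cleaner and complete: the incidence set is Borel, and the Borel map $\xi\mapsto P_\xi$ can be evaluated jointly.
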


The value $0$ in the second branch of~\eqref{eq:canonical-green} is only
a convention at an actual zero. The modulus identity is asserted off the
configuration; at a zero the analytic function vanishes. On an exceptional
configuration the first branch is well defined as $I_\alpha(z)$, since
$P_\xi\equiv1$ and $T_0(\xi)=0$.

The normalized finite factor is analytic in $z$ and equals one at the
origin. Far from the center, its analytic logarithm has zero angular mean.
The following estimate upgrades the fixed-center variance control to
local uniform convergence of analytic functions and all derivatives.

\begin{lemma}\label{lem:analytic-tail}
Fix $0<r<s<R_0<1$. For $|w|>R_0$ and $|z|<s$, let
\begin{equation}\label{eq:analytic-symbol}
 \ell_z(w)=\Log(1-z/w)-\Log(1-z\bar w)
      =\sum_{k=1}^\infty\frac{z^k}{k}(\bar w^k-w^{-k}),
\end{equation}
where both logarithms vanish at $z=0$. For $R_0\le R<S<1$ and
$\Xi\sim H_\alpha$, write
$H_{R,S}(z)=\sum_{R<|x|<S}\ell_z(x)$. For every integer $j\ge0$,
\begin{equation}\label{eq:analytic-tail-bound}
 \E\norm{H_{R,S}}_{C^j(\overline D_r)}^2
       \le C_{\alpha,r,s,R_0,j}(S^2-R^2)
       \le C_{\alpha,r,s,R_0,j}(1-R^2).
\end{equation}
The same estimate holds for $\Xi_{N,\alpha}$ with a constant independent
of $N$. Here the $C^j$ norm controls the first $j$ complex derivatives as
well as the function itself.
\end{lemma}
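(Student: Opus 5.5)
Since $H_{R,S}$ is analytic in $z$ on $D_s$, the plan is to reduce the $C^j$ norm on $\overline D_r$ to an $L^2$ average over a slightly larger disk and then apply the variance bound of Lemma~\ref{lem:variance} at each fixed $z$. Fix an intermediate radius $r<r'<s$. Cauchy estimates, together with the mean-value property for the subharmonic function $|H_{R,S}^{(i)}|^2$, give
\[
 \norm{H_{R,S}}_{C^j(\overline D_r)}^2
 \le C_{r,r',j}\int_{D_{r'}}|H_{R,S}(z)|^2\,dm(z).
\]
Taking expectations and using Tonelli, it suffices to bound $\E|H_{R,S}(z)|^2$ by $C(S^2-R^2)$ uniformly for $|z|\le r'$.

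For fixed $z$ with $|z|<s$, consider the symbol $f_z(w)=\ell_z(w)\ind_{\{R<|w|<S\}}$. It is bounded and compactly supported, so $\sum_{R<|x|<S}\ell_z(x)=\X_\Xi(f_z)+\int f_z\,\rho\,dm$. The mean term vanishes. Indeed, the series in~\eqref{eq:analytic-symbol} converges uniformly for $|w|>R_0>s>|z|$, and each $\bar w^k-w^{-k}$ has zero angular mean on every circle. Since $\rho$ (for $H_\alpha$ or for the finite process, by~\eqref{eq:intensity-domination}) is radial, the annulus integral is zero. Lemma~\ref{lem:variance} then gives
\[
 \E|H_{R,S}(z)|^2\le\int_{R<|w|<S}|\ell_z(w)|^2\rho_\alpha(w)\,dm(w),
\]
and the same bound holds for $\Xi_{N,\alpha}$ because $\rho_{N,\alpha}\le\rho_\alpha$. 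This is where uniformity in $N$ comes from.

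The remaining and main step is the pointwise estimate $|\ell_z(w)|\le C_{s,R_0}(1-|w|^2)$ for $|z|\le s$ and $|w|>R_0$. Once it holds, the factor $(1-|w|^2)^2$ cancels the boundary singularity of $\rho_\alpha$, and the annulus integral becomes $C\alpha\,(S^2-R^2)$. For the pointwise bound, write
\[
 \ell_z(w)=\Log\frac{1-z/w}{1-z\bar w}
 =\Log\Bigl(1+\frac{z(\bar w-1/w)}{1-z\bar w}\Bigr).
\]
Here $|\bar w-1/w|=(1-|w|^2)/|w|\le(1-|w|^2)/R_0$ and $|1-z\bar w|\ge1-s$. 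So the argument of $\Log$ is $1+\epsilon$ with $|\epsilon|\le C(1-|w|^2)$, where $C=s/\bigl(R_0(1-s)\bigr)$.

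The one delicate point is choosing the correct branch. The function $\ell_z(w)$ is defined as a difference of principal logarithms vanishing at $z=0$. It coincides with $\Log(1+\epsilon)$ by continuity in $z$ along the segment from $0$ to $z$, provided $|\epsilon|$ stays below $1/2$. When it does not, that is, when $1-|w|^2$ is not small, we simply use that both logarithms are bounded, by a constant depending on $s$ and $R_0$, on the compact parameter region. Both cases therefore give $|\ell_z(w)|\le C'(1-|w|^2)$. Integrating this over $|z|\le r'$ yields~\eqref{eq:analytic-tail-bound}, and the last inequality in~\eqref{eq:analytic-tail-bound} is trivial since $S<1$.
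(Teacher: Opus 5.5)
Your proposal is correct and follows essentially the same route as the paper: the annulus means vanish by angular symmetry of $\bar w^k-w^{-k}$ against the radial intensity, the variance contraction of Lemma~\ref{lem:variance} combined with the pointwise bound $|\ell_z(w)|\le C(1-|w|^2)$ gives the fixed-$z$ estimate (uniform in $N$ via $\rho_{N,\alpha}\le\rho_\alpha$), and the subharmonic mean inequality plus Cauchy estimates pass from pointwise second moments to the $C^j$ norm. The only difference is how you get the pointwise bound: the paper reads it off the series in one line via $|\bar w^k-w^{-k}|=|w|^{-k}(1-|w|^{2k})\le kR_0^{-k}(1-|w|^2)$, which avoids your branch discussion and two-case split.
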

\begin{proof}
Since $1-|w|^{2k}\le k(1-|w|^2)$, the series gives
\begin{equation}\label{eq:analytic-symbol-bound}
 \sup_{|z|\le s}|\ell_z(w)|
       \le(1-|w|^2)\sum_{k=1}^\infty(s/R_0)^k
       \le C_{s,R_0}(1-|w|^2).
\end{equation}
Each term in~\eqref{eq:analytic-symbol} has zero angular mean in $w$.
Uniform convergence on the indicated region and the radial intensity
therefore imply $\E H_{R,S}(z)=0$. By Lemma~\ref{lem:variance} and
\eqref{eq:analytic-symbol-bound},
\[
 \sup_{|z|\le s}\E|H_{R,S}(z)|^2\le C(S^2-R^2).
\]
For an analytic function $h$ on $D_s$, the subharmonic mean inequality on
an intermediate disk, followed by Cauchy's formula for derivatives,
yields
\[
 \norm h_{C^j(\overline D_r)}^2
       \le C_{r,s,j}\int_{D_s}|h(z)|^2dm(z).
\]
Taking expectations proves~\eqref{eq:analytic-tail-bound}. The finite
processes have radial intensities dominated by $\rho_\alpha$, so the
same argument applies to them.
\end{proof}

\begin{proof}[Proof of Proposition~\ref{prop:canonical}]
Denote the finite product in~\eqref{eq:canonical-product} by
$P_{\xi,n}$, with value $1$ when $0\in\xi$. For a fixed disk
$\overline D_r$, choose $r<s<R_0<1$. Lemma~\ref{lem:analytic-tail}
implies, for every $j\ge0$,
\[
 \sum_n\E\norm{H_{R_n,R_{n+1}}}_{C^j(\overline D_r)}
       \le C\sum_n(1-R_n^2)^{1/2}<\infty,
\]
where finitely many terms with $R_n<R_0$ are omitted. Tonelli's theorem
therefore gives almost sure absolute convergence of the exterior
logarithmic increments, with all derivatives. Intersect over a countable
disk exhaustion and all $j$. On each disk the products split as
\[
 P_{\Xi,n}(z)=
 \prod_{|x|\le R_0}\frac{1-z/x}{1-z\bar x}
       \exp\left\{\sum_{R_0<|x|<R_n}\ell_z(x)\right\}.
\]
Their limits agree on overlapping disks and define an analytic function
$\widehat P_\Xi$ with value $1$ at zero and divisor $\Xi$. The exterior
factor is zero-free. In a random zero-free neighborhood of zero,
local uniform convergence permits passage to the logarithmic jets.
By~\eqref{eq:analytic-symbol} and~\eqref{eq:unweighted-jet-limit},
$L_k(\widehat P_\Xi)=J_k(\Xi)$ for every $k$. The Taylor recursion
therefore identifies $\widehat P_\Xi=P_\Xi$. This proves
\eqref{eq:canonical-product} and~\eqref{eq:canonical-jets}. On a compact
set avoiding $\Xi$, the limit has modulus bounded away from zero, so
convergence of the products and their derivatives gives
\eqref{eq:canonical-connection}.

For completeness, these identities may be imposed on a common Borel
full set. Finite products over a compact disk are Borel maps into
$\Hol$, by measurable enumeration and the point-evaluation description
of its Borel structure. Since $\Hol$ is completely metrizable, their
convergence set and limit are Borel. Intersect the previous
$\mathscr G_\alpha$ with this set and the countably many exterior
convergence events just used. Reset $P_\xi\equiv1$ and
$T_0(\xi)=J_k(\xi)=0$ on the complement. These changes are on an
$H_\alpha$-null set and preserve Theorem~\ref{thm:realization}.
Write $L_k(\xi)=L_k(P_\xi)$ and $S_\xi=P_\xi'/P_\xi$ on the good
set, with zero values outside it. Their evaluations are Borel wherever
defined. This fixes the versions used below.

For a fixed deterministic center $z$, the variance contraction and
\eqref{eq:green-tail} give
\begin{equation}\label{eq:radial-green-error}
 \E\left|\X_\Xi(q_z\ind_{D_{R_n}})-T_z^{(2)}(\Xi)\right|^2
                     \le C_{\alpha,z}(1-R_n^2)
\end{equation}
for all large $n$. Summability gives almost sure convergence along
$(R_n)$. For $R_n>|z|$, angular averaging in~\eqref{eq:mean-drift}
gives the exact identity
\begin{equation}\label{eq:radial-P-modulus}
 \log|P_{\Xi,n}(z)|
       =\X_\Xi(q_0\ind_{D_{R_n}})
        -\X_\Xi(q_z\ind_{D_{R_n}})+I_\alpha(z).
\end{equation}
Almost surely $z\notin\Xi$, so passing to the limit yields
\begin{equation}\label{eq:P-modulus-fixed}
 \log|P_\Xi(z)|=T_0(\Xi)-T_z^{(2)}(\Xi)+I_\alpha(z).
\end{equation}
Thus~\eqref{eq:canonical-green} is a version of $T_z^{(2)}$ for each
fixed $z$.

The incidence set $\{(\xi,z):z\in\xi\}$ is Borel, as follows by
testing the point measure on balls with radii decreasing to zero around
the variable center. On the good set, $P_\xi$ vanishes precisely at
$\xi$; outside it $P_\xi\equiv1$. Joint evaluation of the Borel
$\Hol$-valued map therefore makes~\eqref{eq:canonical-green} everywhere
well defined and jointly Borel, with continuous paths off $\xi$.
Its definition gives~\eqref{eq:canonical-modulus} simultaneously off
the configuration.
\end{proof}

\subsection{Joint Green limits}\label{subsec:cluster-proof}

The following extension retains the coupling established by the construction.

\begin{proposition}\label{prop:cluster}
If a subsequence satisfies
$(F_{N,\alpha},\Xi_{N,\alpha})\Longrightarrow(F,\Xi)$, then, for any
finite collection of deterministic centers $z_1,\ldots,z_\ell$,
\begin{equation}\label{eq:joint-green-limit}
 (F_{N,\alpha},\Xi_{N,\alpha},T_{z_1,N},\ldots,T_{z_\ell,N})
 \Longrightarrow(F,\Xi,T_{z_1}(\Xi),\ldots,T_{z_\ell}(\Xi)).
\end{equation}
Moreover, for each deterministic $z\in\D$,
\begin{equation}\label{eq:cluster-modulus}
 |F(z)|=\exp\{d_\alpha(z)-T_z(\Xi)\}\quad\text{almost surely}.
\end{equation}
In particular, $F(0)\ne0$ and $Z(F)=\Xi$ almost surely.
\end{proposition}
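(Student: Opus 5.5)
The plan follows the cutoff scheme of Lemma~\ref{lem:unweighted-coordinates}, now for the Green symbols $q_{z_i}$, and then uses the finite modulus identity~\eqref{eq:finite-modulus} to pass to the limit pointwise.

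\textbf{Step 1: joint convergence~\eqref{eq:joint-green-limit}.} For each center $z_i$ and each $m$, approximate $q_{z_i}$ in $L^2(\nu_\alpha)$ by a bounded, continuous, compactly supported symbol $g_{i,m}$. One concrete choice is to truncate $q_{z_i}$ at height $m$ and to multiply by a continuous radial cutoff equal to $1$ on $D_{R_m}$. Set
\[
 V_{N,m}=\X_{\Xi_{N,\alpha}}(g_{i,m}),
 \qquad V_m=\X_\Xi(g_{i,m}).
\]
Since $g_{i,m}$ is bounded, continuous and compactly supported, the sum over the configuration is continuous for the vague topology. The centering integrals $\int g_{i,m}\rho_{N,\alpha}\,dm$ converge to $\int g_{i,m}\rho_\alpha\,dm$ by~\eqref{eq:intensity-domination}. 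The continuous mapping theorem then gives
\[
 (F_{N,\alpha},\Xi_{N,\alpha},V_{N,m})\Longrightarrow(F,\Xi,V_m)
\]
along the subsequence, for each fixed $m$. Lemma~\ref{lem:variance} together with the domination $\rho_{N,\alpha}\le\rho_\alpha$ yields
\[
 \E|T_{z_i,N}-V_{N,m}|^2\le\norm{q_{z_i}-g_{i,m}}_{L^2(\nu_\alpha)}^2,
\]
uniformly in $N$; the same bound controls $T^{(2)}_{z_i}(\Xi)-V_m$. Lemma~\ref{lem:joint-approximation}, applied coordinatewise to the finite block of centers, gives joint convergence to $T^{(2)}_{z_i}(\Xi)$. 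By Proposition~\ref{prop:canonical}, $T^{(2)}_{z_i}(\Xi)=T_{z_i}(\Xi)$ almost surely, which proves~\eqref{eq:joint-green-limit}.

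\textbf{Step 2: the modulus identity~\eqref{eq:cluster-modulus}.} Fix a deterministic $z$. Since the finite intensity is absolutely continuous, $z\notin\Xi_{N,\alpha}$ almost surely, so~\eqref{eq:finite-modulus} holds almost surely:
\[
 \log|F_{N,\alpha}(z)|=c_{N,\alpha}+I_{N,\alpha}(z)-T_{z,N}.
\]
Equivalently,
\[
 |F_{N,\alpha}(z)|\,e^{T_{z,N}}=\exp\{c_{N,\alpha}+I_{N,\alpha}(z)\}.
\]
The right side is deterministic and converges to $e^{d_\alpha(z)}$ by~\eqref{eq:cN} and Lemma~\ref{lem:finite-modulus}. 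By Step~1, the pair $(F_{N,\alpha}(z),T_{z,N})$ converges jointly in distribution to $(F(z),T_z(\Xi))$, and the map $(w,t)\mapsto|w|e^{t}$ is continuous. Hence $|F(z)|e^{T_z(\Xi)}$ equals the constant $e^{d_\alpha(z)}$ almost surely, and $T_z(\Xi)$ is finite almost surely. This gives~\eqref{eq:cluster-modulus}. Working with the exponential form avoids the logarithmic singularity: continuity of $|w|e^t$ holds even when $F(z)=0$, so no separate argument is needed at zeros.

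\textbf{Step 3: the final assertions.} Taking $z=0$ gives $|F(0)|=e^{d_\alpha(0)-T_0(\Xi)}>0$ almost surely. Then $F\not\equiv0$, and continuity of the zero-divisor map (Lemma~\ref{lem:analytic-coordinates}), applied jointly with the configuration coordinate, gives $Z(F)=\Xi$ almost surely.

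The main obstacle is Step~1. The statement requires convergence with the function coordinate $F$ retained, whereas the Green symbols $q_{z_i}$ are singular at $z_i$ and unbounded. Taking the $L^2$ approximation route through Lemma~\ref{lem:variance} and Lemma~\ref{lem:joint-approximation} handles both the singularity at $z_i$ and the boundary tail in one step. It also avoids any pathwise analysis of the configuration near $z_i$, which would otherwise be the delicate part.
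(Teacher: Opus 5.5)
Your proposal is correct and follows essentially the same route as the paper. You approximate $q_{z_i}$ in $L^2(\nu_\alpha)$ by $C_c$ symbols, control the errors uniformly in $N$ by the variance contraction, and apply Lemma~\ref{lem:joint-approximation} to get \eqref{eq:joint-green-limit}; you then pass the exponential form of \eqref{eq:finite-modulus} through the joint limit. The only differences are cosmetic: your truncation-at-height-$m$ approximant, your explicit appeal to Proposition~\ref{prop:canonical} to replace $T^{(2)}_{z_i}$ by the canonical version, and deriving $F(0)\ne0$ from the modulus identity at $z=0$ instead of citing Theorem~\ref{thm:realization}.
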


\begin{proof}
Let $(F_{N,\alpha},\Xi_{N,\alpha})\Longrightarrow(F,\Xi)$ along a
subsequence. For each of the finitely many centers $z_i$, choose
$f_{i,n}\in C_c(\D)$ approximating $q_{z_i}$ in $L^2(\nu_\alpha)$.
Such approximations can be obtained by removing a small disk about the
center and an outer boundary annulus, then taking a continuous cutoff.
Lemma~\ref{lem:green-estimates} gives control of both removed parts.
For fixed $n$, vague convergence and local convergence of the intensities
give, jointly with the retained function and configuration,
\[
 \X_{\Xi_{N,\alpha}}(f_{i,n})\Longrightarrow\X_\Xi(f_{i,n}),
 \qquad 1\le i\le \ell.
\]
By the variance contraction,
\begin{align*}
 \sup_N\E\bigl|T_{z_i,N}-\X_{\Xi_{N,\alpha}}(f_{i,n})\bigr|^2
       &\le\norm{q_{z_i}-f_{i,n}}_{L^2(\nu_\alpha)}^2,\\
 \E\bigl|T_{z_i}(\Xi)-\X_\Xi(f_{i,n})\bigr|^2
       &\le\norm{q_{z_i}-f_{i,n}}_{L^2(\nu_\alpha)}^2.
\end{align*}
These errors tend to zero; summing over the finite collection of centers
and applying Lemma~\ref{lem:joint-approximation} proves
\eqref{eq:joint-green-limit}.

For a fixed center, write the finite modulus identity in exponential
form,
\[
 |F_{N,\alpha}(z)|=
      \exp\{c_{N,\alpha}+I_{N,\alpha}(z)-T_{z,N}\}.
\]
In the joint convergence just established, the constants converge to
$d_\alpha(z)$. The relation between function evaluation and the real
Green coordinate is closed, so the limit satisfies
\eqref{eq:cluster-modulus}. The nonzero basepoint and the identity $Z(F)=\Xi$ also follow from
Theorem~\ref{thm:realization}.
\end{proof}

In particular, any finite collection of Green statistics may be included
jointly with the basepoint and logarithmic jets in the function limit.
Keep the $\Hol$ coordinate in~\eqref{eq:joint-green-limit} and apply the
finite continuous coordinate map at the nonzero limiting basepoint.
All resulting finite-dimensional laws are projections of the same joint
law, rather than separately specified limits.

\section{M\"obius covariance}\label{sec:covariance}

The covariance proof uses the invariance of the centered Green statistic.
Once its change of variables has been established for the fixed canonical
version, equality of moduli determines the analytic transformation up to a
constant phase. The conditional Haar phase from
Theorem~\ref{thm:reconstruction} then gives an exact equality of function
laws.

\subsection{Transport of the centered Green field}\label{subsec:transport}

For $\phi$ as in~\eqref{eq:automorphism}, the multiplier of parameter one
is $C_{1,\phi}(z)=(1-\bar az)/\sqrt{1-|a|^2}$. Direct calculation gives
\begin{equation}\label{eq:kernel-transformation}
 1-\phi(z)\overline{\phi(w)}
       =\frac{1-z\bar w}{C_{1,\phi}(z)\overline{C_{1,\phi}(w)}},
 \qquad |\phi'(z)|=|C_{1,\phi}(z)|^{-2}.
\end{equation}
In particular,
\begin{equation}\label{eq:multiplier-modulus}
 \log|C_{\alpha,\phi}(z)|=d_\alpha(\phi(z))-d_\alpha(z),
 \qquad (\log C_{\alpha,\phi})'(z)
                  =-\frac{\alpha\bar a}{1-\bar az}.
\end{equation}

\begin{lemma}\label{lem:green-transport}
If $\Xi\sim H_\alpha$, then $\eta=\phi^{-1}\Xi$ has law $H_\alpha$.
For every fixed $\phi$, on an event of probability one,
\begin{equation}\label{eq:green-transport}
 T_z(\phi^{-1}\Xi)=T_{\phi(z)}(\Xi),\qquad z\notin\phi^{-1}\Xi.
\end{equation}
The maps $T$ are those fixed in Proposition~\ref{prop:canonical}.
\end{lemma}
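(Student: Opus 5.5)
The plan is to prove the two assertions in order: first invariance of the law $H_\alpha$ under $\phi^{-1}$, then the transport identity for the fixed Borel versions $T_z$ of Proposition~\ref{prop:canonical}. For the first assertion I will show that the pushforward of a determinantal process with kernel $\mathsf K_\alpha$ is determinantal with kernel $\mathsf K_\alpha(\phi(z),\phi(w))|\phi'(z)||\phi'(w)|$ relative to $dm$. By \eqref{eq:kernel-transformation}, this equals $\mathsf K_\alpha(z,w)$ times a factor $g(z)\overline{g(w)}$ with $|g|=1$. Indeed, $(1-|\phi(z)|^2)^{(\alpha-1)/2}$ and $|C_{1,\phi}(z)|^{-2}$ combine with $C_{1,\phi}(z)^{\alpha+1}$ from $(1-\phi(z)\overline{\phi(w)})^{-\alpha-1}$ to leave a unimodular cocycle. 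Conjugation by a unimodular function leaves every correlation determinant unchanged, so the correlation functions agree. Local trace-class and the Laplace functional \eqref{eq:laplace-functional} then determine the law, so $\phi^{-1}\Xi\sim H_\alpha$.

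For \eqref{eq:green-transport} I first fix a single deterministic $z$ and compare the $L^2$ completions. The symbol is invariant, $q_z(w)=q_{\phi(z)}(\phi(w))$, by invariance of the pseudohyperbolic distance. The intensity is invariant, $\nu_\alpha\circ\phi^{-1}=\nu_\alpha$. Take compactly supported approximants $f_n\to q_{\phi(z)}$ in $L^2(\nu_\alpha)$. Then $f_n\circ\phi\to q_z$ in $L^2(\nu_\alpha)$, and for bounded compactly supported symbols the centered sums satisfy $\X_{\phi^{-1}\Xi}(f\circ\phi)=\X_\Xi(f)$ literally, since the integral term is preserved. Passing to the $L^2$ limit via Lemma~\ref{lem:variance}, applied to both $\Xi$ and $\eta$, gives $T^{(2)}_z(\eta)=T^{(2)}_{\phi(z)}(\Xi)$ almost surely. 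Proposition~\ref{prop:canonical} then yields $T_z(\eta)=T_{\phi(z)}(\Xi)$ almost surely, using that $\eta\sim H_\alpha$ so the fixed version applies to $\eta$.

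It remains to upgrade this to a single event valid for all $z$. I would take a countable dense set $D\subset\D$ and intersect the fixed-$z$ events over $z\in D$. I would also intersect with $\{\Xi\in\mathscr G_\alpha,\ \eta\in\mathscr G_\alpha\}$, which has full probability since both have law $H_\alpha$. On this event both sides of \eqref{eq:green-transport} are continuous in $z$ on $\D\setminus\eta$. The left side is continuous by Proposition~\ref{prop:canonical}. The right side is continuous because $z\mapsto\phi(z)$ maps $\D\setminus\eta$ homeomorphically onto $\D\setminus\Xi$ and $T_\cdot(\Xi)$ is continuous off $\Xi$. Every $z\notin\eta$ is a limit of points of $D\setminus\eta$, so the identity extends to all $z\notin\eta$.

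I expect the main obstacle to be the careful $L^2$ limit in the fixed-center step. The concern is that the completion $\X_\eta$ is defined through the law of $\eta$ rather than pathwise. This is resolved because the approximants converge both almost surely along a subsequence and in $L^2$ under either description. The continuity extension is routine once $\mathscr G_\alpha$ membership is secured for both configurations.
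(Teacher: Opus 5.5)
Your proposal is correct and follows the paper's proof essentially step for step. Invariance of $H_\alpha$ comes from the unimodular conjugation of the planar kernel. The fixed-center identity comes from the literal change of variables $\X_\eta(f\circ\phi)=\X_\Xi(f)$ for bounded compactly supported symbols, extended by the variance contraction together with $q_z=q_{\phi(z)}\circ\phi$. The all-center statement then follows from a countable dense set of centers, the requirement $\Xi,\eta\in\mathscr G_\alpha$, and continuity off the configurations. Your explicit bridge $T_z=T_z^{(2)}$ $H_\alpha$-almost everywhere, applied to $\eta$, is the same step the paper takes implicitly when passing from the $L^2$ versions to the canonical ones.
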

\begin{proof}
Use the planar kernel $\mathsf K_\alpha$ from
Lemma~\ref{lem:local-convergence}. By~\eqref{eq:kernel-transformation},
its kernel under pullback by $\phi$ is
\[
 |\phi'(z)|\mathsf K_\alpha(\phi(z),\phi(w))|\phi'(w)|
       =v_\phi(z)\mathsf K_\alpha(z,w)\overline{v_\phi(w)},
\]
where
$v_\phi(z)=\exp\{i(\alpha+1)\operatorname{Im}\Log C_{1,\phi}(z)\}$
has unit modulus and the logarithm is real at zero. Thus all correlation
determinants are unchanged. Their local Laplace expansion as
in~\eqref{eq:laplace-functional} proves the equality of point-process
laws. Formula~\eqref{eq:kernel-transformation} also shows that
$\nu_\alpha$ and the pseudohyperbolic distance are invariant.

For bounded compactly supported $f$, change variables in both the finite
sum and the mean to obtain, on this coupling,
\[
 \X_\eta(f)=\X_\Xi(f\circ\phi^{-1}).
\]
The pullback preserves the $L^2(\nu_\alpha)$ norm. The contraction in
Lemma~\ref{lem:variance} therefore extends this equality to every symbol
in that space. Pseudohyperbolic invariance gives
$q_z\circ\phi^{-1}=q_{\phi(z)}$ almost everywhere, proving
\eqref{eq:green-transport} for each fixed $z$ in the $L^2$ versions.

Take a countable dense set of centers, intersect the corresponding full
probability events, and also require that both $\Xi$ and $\eta$ belong
to $\mathscr G_\alpha$. They do so because their laws are $H_\alpha$.
The canonical fields are continuous off their respective configurations.
Hence the equality extends to all $z\notin\eta$ on this one event.
Its exceptional set may depend on $\phi$; neither the Borel version nor
the radial sequence has been changed.
\end{proof}

\begin{proposition}\label{prop:multiplier-cocycle}
With $a_\phi=\phi^{-1}(0)$ and
$\beta(\phi,\psi)=\Arg(1-\overline{a_\phi}\psi(0))\in(-\pi/2,\pi/2)$,
\begin{equation}\label{eq:main-projective}
 C_{\alpha,\phi\circ\psi}(z)
 =e^{-i\alpha\beta(\phi,\psi)}
   C_{\alpha,\phi}(\psi(z))C_{\alpha,\psi}(z).
\end{equation}
On the universal cover of $\Aut(\D)$, the continuously lifted logarithm
of the derivative defines the exact cocycle
\begin{equation}\label{eq:main-cover}
 \widetilde C_{\alpha,\widetilde\phi}(z)
       =\exp\{-\tfrac\alpha2\Log\phi'(z)\}.
\end{equation}
\end{proposition}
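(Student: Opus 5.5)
The plan is to reduce everything to the parameter-one multiplier and the chain rule, and then pass to parameter $\alpha$ through a continuous logarithm whose additive constant is fixed at the origin. For $\phi$ as in~\eqref{eq:automorphism}, direct differentiation gives
\[
 \phi'(z)=-e^{i\vartheta}\frac{1-|a|^2}{(1-\bar az)^2}
 =-e^{i\vartheta}C_{1,\phi}(z)^{-2}.
\]
Since $\operatorname{Re}(1-\bar az)>0$ on $\D$, the principal logarithm $\Log C_{1,\phi}$ is analytic on $\D$. Moreover $\alpha\Log C_{1,\phi}=\alpha\Log(1-\bar az)-\frac\alpha2\log(1-|a|^2)$, so $C_{\alpha,\phi}=\exp\{\alpha\Log C_{1,\phi}\}$ agrees with~\eqref{eq:multiplier}.

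\textbf{Step 1 (parameter one).} Apply the chain rule $(\phi\circ\psi)'=\phi'(\psi)\,\psi'$ together with the displayed formula. This gives
\[
 C_{1,\phi\circ\psi}(z)^2=\lambda\, C_{1,\phi}(\psi(z))^2C_{1,\psi}(z)^2
\]
for some unimodular constant $\lambda$, which is a product of the three rotation factors $-e^{i\vartheta}$.

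\textbf{Step 2 (logarithmic version).} Set
\[
 g(z)=\Log C_{1,\phi\circ\psi}(z)-\Log C_{1,\phi}(\psi(z))-\Log C_{1,\psi}(z).
\]
This is analytic on the connected set $\D$, and $e^{2g}$ is constant by Step 1. Hence $2g$ takes values in a discrete set, so $g$ is constant. Evaluate at $z=0$. The values $C_{1,\phi\circ\psi}(0)$ and $C_{1,\psi}(0)=(1-|\psi^{-1}(0)|^2)^{-1/2}$ are positive, since they are multipliers evaluated at the origin. The remaining value is $C_{1,\phi}(\psi(0))=(1-\overline{a_\phi}\psi(0))/\sqrt{1-|a_\phi|^2}$, whose principal argument is $\beta(\phi,\psi)\in(-\pi/2,\pi/2)$. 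Therefore $g\equiv-i\beta(\phi,\psi)+\text{(real constant)}$. Comparing moduli at zero shows that the real constant vanishes, because both sides of the squared identity have $|\lambda|=1$. Multiplying $g\equiv-i\beta$ by $\alpha$ and exponentiating gives~\eqref{eq:main-projective}. The restriction of the argument to $(-\pi/2,\pi/2)$ is exactly what makes the principal branch consistent, so no $2\pi$ ambiguity enters once we multiply by a noninteger $\alpha$.

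\textbf{Step 3 (universal cover).} I will model the universal cover $\widetilde{\Aut}(\D)$ as the set of pairs $\widetilde\phi=(\phi,\Lambda)$, where $\Lambda$ is an analytic branch of $\log\phi'$ on $\D$. Such a branch exists because $\phi'$ is zero-free on the simply connected disk. The product is
\[
 (\phi,\Lambda)(\psi,M)=(\phi\circ\psi,\ \Lambda\circ\psi+M).
\]
By the chain rule the right-hand side is again a branch of $\log(\phi\circ\psi)'$. This set is a group, and with the topology of locally uniform convergence of $(\phi,\Lambda)$ it is a connected covering of $\Aut(\D)\cong\T\times\D$ with fibre $2\pi i\Z$. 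It is simply connected, since it is homeomorphic to $\R\times\D$ via the continuous argument of $\Lambda(0)$ and the point $a$. I write $\Log\phi'$ for $\Lambda$. Then $\widetilde C_{\alpha,\widetilde\phi}=\exp\{-\frac\alpha2\Lambda\}$ satisfies
\[
 \widetilde C_{\alpha,\widetilde\phi\widetilde\psi}(z)
 =\widetilde C_{\alpha,\widetilde\phi}(\psi(z))\,\widetilde C_{\alpha,\widetilde\psi}(z)
\]
exactly, directly from the definition of the product. Its modulus is $|\phi'|^{-\alpha/2}=|C_{\alpha,\phi}|$ by~\eqref{eq:kernel-transformation}. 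Hence it differs from $C_{\alpha,\phi}$ only by a unimodular constant, and the projective factor $e^{-i\alpha\beta}$ in~\eqref{eq:main-projective} is precisely the defect of the chosen section $\phi\mapsto(\phi,\text{branch with }C_{\alpha,\phi}>0\text{ at }0)$.

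The main obstacle is Step 3's bookkeeping: identifying this pair model with the universal cover and checking that the lift is continuous. I would handle this by noting that the parametrization $(\vartheta,a)\in\R\times\D$, with $\Lambda(0)=i(\vartheta+\pi)+\log(1-|a|^2)$, is a homeomorphism. Steps 1 and 2 are routine once the principal-branch range $(-\pi/2,\pi/2)$ is observed.
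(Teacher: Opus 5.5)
Your proof is correct and follows essentially the paper's route. You reduce to parameter one, show that the logarithm of the ratio $C_{1,\phi\circ\psi}/\bigl((C_{1,\phi}\circ\psi)C_{1,\psi}\bigr)$ is constant, fix that constant at the origin using $\operatorname{Re}(1-\overline{a_\phi}\psi(0))>0$, multiply by $\alpha$ and exponentiate, and get the cover cocycle from the exact logarithmic chain rule. The differences are minor: the paper obtains the unimodular ratio from \eqref{eq:kernel-transformation} rather than from $\phi'=-e^{i\vartheta}C_{1,\phi}^{-2}$, and it describes the cover by path continuation instead of your explicit pair model.

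Your closing remark overreaches, although the proposition does not need it. In general no lift $\widetilde\phi$ has $\widetilde C_{\alpha,\widetilde\phi}(0)>0$, because moving along the fibre changes that phase only by powers of $e^{-i\pi\alpha}$. So $e^{-i\alpha\beta}$ is not literally the defect of such a section; as the paper notes, the two normalizations simply differ by a spatially constant phase.
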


\subsection{Proof of covariance and the multiplier identities}
\label{subsec:covariance-proof}

\begin{proof}[Proof of Theorem~\ref{thm:covariance}(i) and
Proposition~\ref{prop:multiplier-cocycle}]
Fix $\phi$, and write $b=\phi(0)$ and $\eta=\phi^{-1}\Xi$.
On the full event in Lemma~\ref{lem:green-transport},
\eqref{eq:canonical-modulus} and~\eqref{eq:multiplier-modulus} show that
\[
 \frac{A_\alpha(\Xi)\circ\phi}{C_{\alpha,\phi}}
 \quad\text{and}\quad A_\alpha(\eta)
\]
have the same modulus off $\eta$ and the same zero divisor. Their
analytic quotient is therefore a constant of unit modulus. Because $b$
is deterministic, $P_\Xi(b)\ne0$ almost surely. Evaluation at zero
identifies the constant as $P_\Xi(b)/|P_\Xi(b)|$ and gives
\begin{equation}\label{eq:A-transport}
 \frac{A_\alpha(\Xi)(\phi(z))}{C_{\alpha,\phi}(z)}
   =\frac{P_\Xi(b)}{|P_\Xi(b)|}A_\alpha(\eta)(z).
\end{equation}
In particular, the separate radius and shape identities are
\begin{align}
 e^{c_\alpha-T_0(\eta)}
   &=\frac{|P_\Xi(b)|}{C_{\alpha,\phi}(0)}
                                  e^{c_\alpha-T_0(\Xi)},
                                   \label{eq:radius-transport}\\
 \frac{P_\Xi(\phi(z))}{P_\Xi(b)}
   &=\frac{C_{\alpha,\phi}(z)}{C_{\alpha,\phi}(0)}P_\eta(z).
                                   \label{eq:shape-transport}
\end{align}
They hold on the same coupling for this fixed automorphism; the second is
an identity of analytic functions on the disk.

Now represent $F_\alpha=UA_\alpha(\Xi)$ as in
Theorem~\ref{thm:reconstruction}. Conditional on $\Xi$, the random
variable $U P_\Xi(b)/|P_\Xi(b)|$ is still Haar. It is therefore
independent of $\eta$, whose law is $H_\alpha$. Applying the sampling
identity to~\eqref{eq:A-transport} proves
\eqref{eq:main-covariance}. The configuration-dependent phase has been
absorbed by conditional Haar averaging, not removed from a pathwise
identity.

For the composition formula, let
$L_\phi(z)=\Log C_{1,\phi}(z)$, normalized to be real at zero.
Apply~\eqref{eq:kernel-transformation} twice. The two analytic
multipliers $C_{1,\phi\circ\psi}$ and
$(C_{1,\phi}\circ\psi)C_{1,\psi}$ implement the same kernel
transformation, so their ratio is a constant of unit modulus.
Evaluation at zero determines its argument. Since
$1-\overline{a_\phi}\psi(0)$ has positive real part, this yields the
exact logarithmic identity
\begin{equation}\label{eq:log-cocycle}
 L_{\phi\circ\psi}(z)
       =L_\phi(\psi(z))+L_\psi(z)-i\beta(\phi,\psi).
\end{equation}
There is no residual multiple of $2\pi i$: at zero the right side is
real and equals the real logarithm of the positive normalized multiplier.
Multiplying~\eqref{eq:log-cocycle} by $\alpha$ and exponentiating proves
\eqref{eq:main-projective}.

On the universal cover, start the logarithm of $\phi'$ at zero for the
identity map and continue it along the lifted path. The derivative chain
rule has an exact logarithmic lift,
\[
 \Log(\phi\circ\psi)'=(\Log\phi')\circ\psi+\Log\psi',
\]
with the composition of lifts. Formula~\eqref{eq:main-cover} therefore
satisfies the exact multiplicative cocycle relation. Along the positive
central generator, corresponding to a full rotation, $\Log\phi'$
increases by $2\pi i$, so this cocycle is multiplied by
$e^{-i\pi\alpha}$. It descends to $\Aut(\D)$ precisely when $\alpha$
is a positive even integer; if $\alpha=2n$, it is $(\phi')^{-n}$.
The derivative-normalized cocycle need not be the positive-at-zero
section~\eqref{eq:multiplier}; they differ by a spatially constant phase.
\end{proof}

Differentiating~\eqref{eq:shape-transport} gives the corresponding
canonical logarithmic-derivative transport,
\begin{equation}\label{eq:connection-transport}
 S_{\phi^{-1}\Xi}(z)
       =\phi'(z)S_\Xi(\phi(z))-(\log C_{\alpha,\phi})'(z).
\end{equation}
This holds as a meromorphic identity on the whole disk. It is a
consequence of the analytic shape identity and does not require an
additional choice of cutoff or version.

\subsection{Point laws and logarithmic derivatives}
\label{subsec:point-laws}

The covariance theorem determines all one-point distributions and gives
explicit transformations of finite jets. We record these direct
consequences to describe the normalization at points other than zero.

\begin{corollary}\label{cor:point-laws}
For every deterministic $b\in\D$,
$(1-|b|^2)^{\alpha/2}F_\alpha(b)\overset d=F_\alpha(0)$. In particular,
\begin{equation}\label{eq:point-mellin}
 \E|F_\alpha(b)|^{2u}=(1-|b|^2)^{-\alpha u}M_\alpha(u),
 \qquad \operatorname{Re}u>-1,
\end{equation}
with the same maximal absolute moment domain. Moreover,
\begin{equation}\label{eq:covariance-kernel}
 \E[F_\alpha(z)\overline{F_\alpha(w)}]
       =\Gamma(1+\alpha)(1-z\bar w)^{-\alpha},
 \qquad \E[F_\alpha(z)F_\alpha(w)]=0.
\end{equation}
\end{corollary}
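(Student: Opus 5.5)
Apply Theorem~\ref{thm:covariance}(i) with an automorphism sending $0$ to $b$, evaluate at the origin, and then compute the second moments through the circular symmetry and a power series in $z\bar w$.

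\emph{Point law.} Choose $\phi$ with $\phi(0)=b$. In the notation of \eqref{eq:automorphism}, take $a=\phi^{-1}(0)$ and $\vartheta$ such that $e^{i\vartheta}a=b$; for instance $\phi(z)=(b-z)/(1-\bar bz)$, so $a=b$. Then \eqref{eq:main-covariance}, evaluated at $z=0$, gives
\[
F_\alpha(b)\overset d=C_{\alpha,\phi}(0)F_\alpha(0)=(1-|a|^2)^{-\alpha/2}F_\alpha(0).
\]
Evaluation at $0$ is continuous on $\Hol$, and $|a|=|b|$. This proves $(1-|b|^2)^{\alpha/2}F_\alpha(b)\overset d=F_\alpha(0)$. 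Taking absolute moments and using \eqref{eq:main-barnes} then yields \eqref{eq:point-mellin}. The maximal domain transfers because the two moduli differ by a deterministic positive factor.

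\emph{Vanishing pseudo-covariance.} By Theorem~\ref{thm:realization}(ii), $F_\alpha\overset d=e^{i\theta}F_\alpha$ for every $\theta$. Hence $\E[F_\alpha(z)F_\alpha(w)]=e^{2i\theta}\E[F_\alpha(z)F_\alpha(w)]$, which forces it to vanish, provided the expectation is finite. Finiteness follows from $\E|F_\alpha(z)|^2=(1-|z|^2)^{-\alpha}M_\alpha(1)<\infty$ and Cauchy--Schwarz.

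\emph{Covariance kernel.} I would expand in Taylor coefficients, $F_\alpha=\sum a_nz^n$. First, $\sup_{|z|\le r}\E|F_\alpha(z)|^2<\infty$, and Cauchy's formula on a circle of radius $s$ with $r<s<1$ then gives $\E|a_n|^2\le C s^{-2n}$. So $\E[F_\alpha(z)\overline{F_\alpha(w)}]=\sum_{n,m}\E[a_n\bar a_m]z^n\bar w^m$ converges absolutely for $|z|,|w|<s$. Next, rotation invariance in the argument, $F_\alpha(e^{it}\cdot)\overset d=F_\alpha$ (the case $\phi(z)=e^{it}z$, where $C_{\alpha,\phi}\equiv1$), kills the off-diagonal terms $n\ne m$. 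The kernel is therefore a function $k(z\bar w)=\sum_n\E|a_n|^2(z\bar w)^n$. The diagonal is known: $k(|z|^2)=\E|F_\alpha(z)|^2=M_\alpha(1)(1-|z|^2)^{-\alpha}$. Analyticity in $t=z\bar w$, together with agreement on $[0,1)$, gives $k(t)=M_\alpha(1)(1-t)^{-\alpha}$.

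It remains to evaluate $M_\alpha(1)=G(2+\alpha)/(G(1+\alpha)G(2))=\Gamma(1+\alpha)$, using $G(2)=1$ and the Barnes recursion. This gives \eqref{eq:covariance-kernel}.

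The main point needing care is justifying the interchange of expectation and series, i.e.\ the uniform second-moment bound on compact disks. That bound is supplied by \eqref{eq:point-mellin} at $u=1$, because $(1-|z|^2)^{-\alpha}$ is bounded for $|z|\le s$.
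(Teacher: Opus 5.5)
Your proof is correct and is essentially the paper's argument: the point law from evaluating the covariance identity at zero with $\phi(0)=b$, the diagonal $\E|F_\alpha(z)|^2=\Gamma(1+\alpha)(1-|z|^2)^{-\alpha}$ from $M_\alpha(1)=\Gamma(1+\alpha)$, a local second-moment bound to pass the expectation through the power series, and the independent Haar phase for $\E[F_\alpha(z)F_\alpha(w)]=0$. The only difference is in how the kernel is recovered from its diagonal: you kill the off-diagonal Taylor covariances $\E[a_n\bar a_m]$ by invariance under the rotations $z\mapsto e^{it}z$ (where $C_{\alpha,\phi}\equiv1$) and then apply the one-variable identity theorem in $t=z\bar w$, whereas the paper uses $\E\sup_{D_r}|F_\alpha|^2<\infty$ and the general fact that a holomorphic--antiholomorphic kernel is determined by its diagonal, which needs no rotation invariance.
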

\begin{proof}
Choose $\phi(0)=b$ and evaluate~\eqref{eq:main-covariance} at zero.
Here $|a|=|b|$ and
$C_{\alpha,\phi}(0)=(1-|b|^2)^{-\alpha/2}>0$, which proves the
one-point law and~\eqref{eq:point-mellin}. Deterministic scaling preserves
the absolute moment domain.

The Barnes recursion gives $M_\alpha(1)=\Gamma(1+\alpha)$, and thus the
diagonal of the first kernel in~\eqref{eq:covariance-kernel}. The mean
inequality for $|F_\alpha|^2$ and~\eqref{eq:point-mellin} imply
$\E\sup_{D_r}|F_\alpha|^2<\infty$ for every $r<1$. Consequently
expectation can be interchanged with local Cauchy integrals, making that
kernel holomorphic in its first variable and antiholomorphic in its
second. Such a kernel is determined by its diagonal: expand on a small
disk in $z^m\bar z^n$ and identify the coefficients, then continue
analytically. This proves the first equality. The second is integrable by
Cauchy--Schwarz and vanishes by the independent Haar phase.
\end{proof}

For a deterministic $b$ define
$L_r^{(b)}(F)=(\log F)^{(r)}(b)/(r-1)!$ whenever $F(b)\ne0$.
The preceding corollary ensures this for $F_\alpha$ almost surely.

\begin{corollary}\label{cor:moving-jets}
Let $\phi$ be as in~\eqref{eq:automorphism}, with $b=\phi(0)$ and
$d=\phi'(0)$. For every $K\ge1$,
\begin{equation}\label{eq:moving-jets}
 \left((1-|a|^2)^{\alpha/2}F_\alpha(b), 
   \left\{\sum_{r=1}^k\binom kr d^r\bar a^{k-r}L_r^{(b)}(F_\alpha)
                   +\alpha\bar a^k\right\}_{k=1}^K\right)
 \overset d=(F_\alpha(0),L_1(F_\alpha),\ldots,L_K(F_\alpha)).
\end{equation}
\end{corollary}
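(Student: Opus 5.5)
Apply the covariance identity \eqref{eq:main-covariance} to the fixed automorphism $\phi$ and read off the basepoint and the first $K$ logarithmic jets on both sides. Put $G=F_\alpha\circ\phi$. Since $F_\alpha(b)\ne0$ almost surely by Corollary~\ref{cor:point-laws}, $G(0)\neq0$ almost surely. By \eqref{eq:main-covariance}, $G\overset d=C_{\alpha,\phi}F_\alpha$ in $\Hol$. The map $f\mapsto(f(0),L_1(f),\ldots,L_K(f))$ is continuous on the open set $\{f(0)\ne0\}$ by Lemma~\ref{lem:analytic-coordinates}, so it transports this equality of laws. It then remains to compute these coordinates for both sides.

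\emph{Right side.} $C_{\alpha,\phi}(0)=(1-|a|^2)^{-\alpha/2}$, and $\log C_{\alpha,\phi}(z)=-\tfrac\alpha2\log(1-|a|^2)+\alpha\Log(1-\bar az)$. Hence
\[
 L_k(C_{\alpha,\phi})=k[z^k]\,\alpha\Log(1-\bar a z)=-\alpha\bar a^k .
\]
Jets are additive under products, so $L_k(C_{\alpha,\phi}F_\alpha)=L_k(F_\alpha)-\alpha\bar a^k$. Moving the deterministic factor $(1-|a|^2)^{-\alpha/2}$ and the deterministic shift $-\alpha\bar a^k$ to the other side is a continuous bijection of coordinates, which preserves equality in law. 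It gives
\[
 \bigl((1-|a|^2)^{\alpha/2}G(0),\{L_k(G)+\alpha\bar a^k\}_{k\le K}\bigr)\overset d=(F_\alpha(0),\{L_k(F_\alpha)\}_{k\le K}).
\]

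\emph{Left side.} Here $G(0)=F_\alpha(b)$. We need $L_k(F_\alpha\circ\phi)$ in terms of the jets $L^{(b)}_r(F_\alpha)$, and this is the main step. By definition, $L_k(G)=k[z^k]\bigl(\log F_\alpha(\phi(z))-\log F_\alpha(b)\bigr)$. Write the local expansion
\[
 \log F_\alpha(b+h)-\log F_\alpha(b)=\sum_{r\ge1}\frac{L_r^{(b)}(F_\alpha)}{r}h^r ,
\]
and substitute $h=\phi(z)-b$. The binomial structure $\sum_r\binom kr d^r\bar a^{k-r}$ indicates that one should not compose Taylor series directly. Instead, use the identity of the form $\phi(z)-b=dz/(1-\bar az)$ up to the appropriate normalization; from \eqref{eq:automorphism}, $\phi(z)-b=-e^{i\vartheta}(1-|a|^2)z/(1-\bar az)$ and $d=\phi'(0)=-e^{i\vartheta}(1-|a|^2)$. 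So $h=dz/(1-\bar a z)$ exactly. Then
\[
 [z^k]\,h^r=d^r[z^{k-r}](1-\bar az)^{-r}=d^r\binom{k-1}{r-1}\bar a^{k-r}.
\]
Therefore
\[
 L_k(G)=k\sum_{r=1}^k\frac{L_r^{(b)}}{r}\,d^r\binom{k-1}{r-1}\bar a^{k-r}=\sum_{r=1}^k\binom kr d^r\bar a^{k-r}L^{(b)}_r(F_\alpha),
\]
using $\tfrac kr\binom{k-1}{r-1}=\binom kr$. Substituting this into the display above yields \eqref{eq:moving-jets}.

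The only care needed is justification. The local logarithm of $F_\alpha$ near $b$ is defined on a random zero-free disk. The composed power series converges for small $|z|$, since $h\to0$. Jets do not depend on the branch. All maps involved are Borel, in fact continuous on $\{f(0)\ne0\}$, so the equality in law passes through them. I expect no genuine obstacle beyond verifying the identity $\phi(z)-b=\phi'(0)z/(1-\bar az)$ and the sign conventions in $\phi$.
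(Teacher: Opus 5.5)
Your proposal is correct and matches the paper's proof essentially line for line: you write $\phi(z)=b+dz/(1-\bar az)$, expand the local logarithm at $b$ to get $L_k(F_\alpha\circ\phi)=\sum_{r=1}^k\binom kr d^r\bar a^{k-r}L_r^{(b)}(F_\alpha)$, use $L_k(C_{\alpha,\phi})=-\alpha\bar a^k$, and push the covariance identity through the continuous basepoint--jet map. The only cosmetic difference is that you transport $F_\alpha\circ\phi\overset d=C_{\alpha,\phi}F_\alpha$ and then move the deterministic factors across, whereas the paper applies the map directly to $F_\alpha\circ\phi/C_{\alpha,\phi}\overset d=F_\alpha$.
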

\begin{proof}
Since $\phi(z)=b+dz/(1-\bar az)$, expansion of a local logarithm $H$ at
$b$ gives
\[
 k[z^k]H(\phi(z))
  =\sum_{r=1}^k\frac{k}{r}\binom{k-1}{r-1}
                   d^r\bar a^{k-r}L_r^{(b)}(F).
\]
Use $(k/r)\binom{k-1}{r-1}=\binom kr$ and
$L_k(C_{\alpha,\phi})=-\alpha\bar a^k$, and apply the basepoint--jet
map to $F_\alpha\circ\phi/C_{\alpha,\phi}\overset d=F_\alpha$.
\end{proof}

\section{Classification of covariant realizations}\label{sec:geometric}

We prove parts~(ii)--(iii) of Theorem~\ref{thm:geometric}. The comparison
with the canonical function leaves a spatially constant random factor.
Two elementary consequences of invariance identify its dependence on the
zeros and its phase.

\subsection{Mixing of the zeros and circular symmetry}
\label{subsec:scale-inputs}

For the standard Bergman process $H_1$, strong mixing under every
hyperbolic or parabolic disk automorphism was proved by Bufetov, Qiu
and Shamov~\cite[Lemma~8.5]{BQS2021}. The following direct
correlation-kernel argument gives the corresponding statement for
every $H_\alpha$, $\alpha>0$.

\begin{lemma}
\label{lem:hyperbolic-mixing}
Let $g(z)=(z+\tanh1)/(1+z\tanh1)$. If $\Xi\sim H_\alpha$ and
$V,W\in L^2(H_\alpha)$, then
\begin{equation}\label{eq:hyperbolic-mixing}
 \E[V(\Xi)W(g^n\Xi)]\longrightarrow\E V(\Xi)\,\E W(\Xi)
 \qquad(n\to\infty).
\end{equation}
In particular, the action of $g$ is ergodic.
\end{lemma}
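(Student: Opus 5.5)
The plan is the standard reduction of mixing for determinantal processes to decay of cross-correlation kernels, followed by an $L^2$ density argument. By Lemma~\ref{lem:green-transport}, $g^n\Xi$ has law $H_\alpha$, so the bilinear form $(V,W)\mapsto\E[V(\Xi)W(g^n\Xi)]$ is bounded by $\norm V_2\norm W_2$ uniformly in $n$. It therefore suffices to prove \eqref{eq:hyperbolic-mixing} for $V,W$ in a total subset of $L^2(H_\alpha)$. I would take linear combinations of exponential functionals $V(\xi)=e^{-\langle v,\xi\rangle}$, $W(\xi)=e^{-\langle w,\xi\rangle}$ with $v,w\in C_c(\D)$ nonnegative. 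These are bounded, and they span a dense subspace because they generate the Borel $\sigma$-algebra of $\Conf$ and form a multiplicative class.

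For such functionals we have $W(g^n\Xi)=e^{-\langle w\circ g^n,\Xi\rangle}$. So the joint quantity is a single Laplace functional $\E e^{-\langle v+w_n,\Xi\rangle}$ with $w_n=w\circ g^n$, whose support $g^{-n}(\supp w)$ escapes to the boundary point $-1$. Put $h=1-e^{-v}$ and $k_n=1-e^{-w_n}$. Once $n$ is large the supports are disjoint, and then $1-e^{-(v+w_n)}=h+k_n$. By the standard Fredholm determinant formula, which is equivalent to \eqref{eq:laplace-functional},
\[
 \E e^{-\langle v+w_n,\Xi\rangle}=\det(I-(h+k_n)\mathsf K_\alpha),
\]
and this is a determinant of a $2\times2$ block operator on $L^2(A)\oplus L^2(A_n)$, where $A=\supp v$ and $A_n=\supp w_n$. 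The diagonal blocks are $\sqrt h\mathsf K_\alpha\sqrt h$ on $A$ and $\sqrt{k_n}\mathsf K_\alpha\sqrt{k_n}$ on $A_n$. By the Möbius covariance of the kernel (the pullback identity in the proof of Lemma~\ref{lem:green-transport}), the second is unitarily equivalent, via $\phi=g^n$ and a unimodular gauge $v_\phi$, to $\sqrt k\mathsf K_\alpha\sqrt k$ on $\supp w$ with $k=1-e^{-w}$. Hence the product of the diagonal determinants equals $\E V\,\E W$ exactly, for every $n$.

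The remaining step, which I expect to be the main obstacle, is to show that the off-diagonal block $B_n=\sqrt h\,\mathsf K_\alpha\,\sqrt{k_n}$ tends to zero in Hilbert--Schmidt norm. Continuity of the Fredholm determinant in trace norm then finishes the argument, since the diagonal blocks have fixed trace norms and the off-diagonal perturbation vanishes. We have
\[
 \norm{B_n}_2^2=\iint h(z)k_n(w)|\mathsf K_\alpha(z,w)|^2\,dm(z)\,dm(w),
 \qquad |\mathsf K_\alpha(z,w)|^2=\rho_\alpha(z)\rho_\alpha(w)\,\frac{\pi^2}{\alpha^2}\,D_z(w)^{\alpha+1}
\]
(up to the constant factor), using \eqref{eq:pseudohyperbolic}. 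Changing variables $w=g^{-n}(w')$ and using invariance of $\nu_\alpha$ gives a bound of the form
\[
 C\sup_{z\in A,\,w'\in\supp w}D_z(g^{-n}w')^{\alpha+1}\,\nu_\alpha(A)\,\nu_\alpha(\supp w).
\]
Since $z$ stays in a compact set while $g^{-n}w'\to-1$ uniformly for $w'$ in a compact set, we get $D_z(g^{-n}w')\le C(1-|g^{-n}w'|^2)\to0$ uniformly, so $\norm{B_n}_2\to0$. The full Fredholm determinant is the determinant of the block-diagonal part times $\det(I-\text{small})$, and its convergence follows from Lipschitz continuity of Fredholm determinants in trace norm. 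Because the blocks involved are all products of two Hilbert--Schmidt pieces ($\sqrt h\mathsf K^{1/2}$ and so on), Hilbert--Schmidt smallness of the cross terms does give trace-norm smallness.

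Ergodicity follows directly. If $V$ is $g$-invariant, apply \eqref{eq:hyperbolic-mixing} with $W=\bar V$ to get $\E|V|^2=|\E V|^2$, so $V$ is almost surely constant.
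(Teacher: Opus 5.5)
Your outline matches the paper's proof. You reduce to exponential functionals by invariance and density, observe that the two diagonal pieces reproduce $\E V\,\E W$ exactly, and remove the interaction using $|\mathsf K_\alpha(z,w)|^2=\rho_\alpha(z)\rho_\alpha(w)D_z(w)^{\alpha+1}$ (this holds exactly, with no extra constant) together with the escape of $g^{-n}(\supp w)$ to the boundary. Ergodicity then follows from $W=\bar V$. The difference is in how the interaction is removed. The paper expands the joint Laplace functional as in \eqref{eq:laplace-functional}, splits each order into $k$ points in $A$ and $l$ points in $g^{-n}B$, and pulls the latter back. It then needs only uniform convergence of the cross entries of the correlation matrix, with Hadamard's bound $\alpha^{k+l}A_{\mathrm h}(A)^kA_{\mathrm h}(B)^l/(k!\,l!)$ supplying dominated convergence, so no trace ideals enter. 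You instead work with the Fredholm determinant of the block operator $T_n$, which requires an operator-theoretic perturbation estimate.

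That estimate is where your write-up has a gap. Trace-norm Lipschitz continuity needs $\norm{B_n}_1\to0$. The factorization $B_n=(\sqrt h\,\mathsf K_\alpha)(\mathsf K_\alpha\sqrt{k_n})$ only gives $\norm{B_n}_1\le(\int h\,d\nu_\alpha)^{1/2}(\int k\,d\nu_\alpha)^{1/2}$, which is bounded but not small, because neither factor is small. Hilbert--Schmidt smallness of a product of two Hilbert--Schmidt-bounded operators does not imply trace-norm smallness: take both factors equal to $m^{-1/2}$ times a rank-$m$ projection.

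The repair is short. Put $P=\sqrt h\,\mathsf K_\alpha\sqrt h$ and $Q_n=\sqrt{k_n}\,\mathsf K_\alpha\sqrt{k_n}$. Then $0\le P\le(\sup h)I$ with $\sup h=1-e^{-\max v}<1$, so $\norm{(I-P)^{-1}}\le e^{\max v}$. The Schur complement gives
\[
 \det(I-T_n)=\det(I-P)\,\det\bigl(I-Q_n-B_n^*(I-P)^{-1}B_n\bigr),
 \qquad
 \norm{B_n^*(I-P)^{-1}B_n}_1\le e^{\max v}\norm{B_n}_2^2\to0 .
\]
Since $\norm{Q_n}_1=\int k\,d\nu_\alpha$ is fixed, the trace-norm Lipschitz bound now applies.

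Alternatively, $\norm{B_n}_1\to0$ does hold. Insert $\mathsf K_\alpha=\mathsf K_\alpha\ind_E\mathsf K_\alpha+\mathsf K_\alpha\ind_{\D\setminus E}\mathsf K_\alpha$, choose the compact set $E\Subset\D$ first, and then let $n\to\infty$.

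With this repair your route is complete. It also gives something the paper's dominated-convergence argument does not state: the error for exponential functionals is $O(\norm{B_n}_2^2)$, hence exponentially small in $n$.
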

\begin{proof}
Relative to the invariant measure
$dA_{\mathrm h}=dm/[\pi(1-|z|^2)^2]$, the process has kernel
\[
 \mathcal K_\alpha(z,w)=
 \alpha\frac{(1-|z|^2)^{(\alpha+1)/2}(1-|w|^2)^{(\alpha+1)/2}}
                  {(1-z\bar w)^{\alpha+1}}.
\]
Its diagonal is $\alpha$, and
\begin{equation}\label{eq:mixing-kernel-decay}
 |\mathcal K_\alpha(z,w)|
 =\alpha(1-\delta(z,w)^2)^{(\alpha+1)/2},\qquad
 \delta(z,w)=\left|\frac{z-w}{1-z\bar w}\right|.
\end{equation}
For compact sets $A,B\Subset\D$, $g^{\pm n}B$ approaches the boundary
uniformly. Consequently $\delta(z,g^{-n}w)\to1$ uniformly on $A\times B$,
and these two sets are disjoint for all sufficiently large $n$.

First take $V(\xi)=e^{-\langle v,\xi\rangle}$ and
$W(\xi)=e^{-\langle w,\xi\rangle}$, where $v,w\ge0$ are continuous
and supported in $A,B$. Expand their joint Laplace functional as in
\eqref{eq:laplace-functional}, separating the points in $A$ and
$g^{-n}B$, and pull the latter variables back to $B$. In the correlation
determinant, the cross-block entries tend uniformly to zero by
\eqref{eq:mixing-kernel-decay}. The determinant of the second diagonal
block is unchanged by this pullback, by invariance of the correlation
measures and of $A_{\mathrm h}$. Thus, for each pair of orders $k,l$,
the integrand tends to the product of the two correlation determinants.
Hadamard's inequality bounds the absolute value of that term by
\[
 \frac{\alpha^{k+l}A_{\mathrm h}(A)^kA_{\mathrm h}(B)^l}{k!\,l!}.
\]
This is summable over $k,l$, so the joint Laplace functional tends to
the product of the separate ones.

The linear span of these local Laplace functionals is an algebra
containing constants and generating the configuration sigma-field; it is
dense in $L^2(H_\alpha)$ by the monotone class theorem. Invariance of
$H_\alpha$, proved in Lemma~\ref{lem:green-transport}, makes composition
by $g^n$ an $L^2$ isometry. Approximation and Cauchy--Schwarz extend the
limit to $V,W\in L^2$. Applying it to an invariant $V$ and its complex
conjugate shows that $V$ is almost surely constant, proving ergodicity.
\end{proof}

\begin{lemma}
\label{lem:forced-phase}
Suppose $F(0)\ne0$ almost surely and
$F\circ\phi\overset d=C_{\alpha,\phi}F$ for every $\phi\in\Aut(\D)$,
with $\alpha>0$ and the convention~\eqref{eq:multiplier}. Then
$e^{i\theta}F\overset d=F$ for every $\theta\in\R$.
If $Y=Y(F)$ is any Borel statistic unchanged by constant phases, then
$F(0)/|F(0)|$ is Haar and independent of $Y$.
\end{lemma}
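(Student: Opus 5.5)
The plan is to use the projective cocycle identity of Proposition~\ref{prop:multiplier-cocycle}. Covariance is assumed for every automorphism, with the positive-at-zero multiplier. Composing two automorphisms will then produce a spatially constant phase $e^{-i\alpha\beta}$. First, apply the hypothesis to $\psi$ and then to $\phi$. Since $F\circ\psi\overset d=C_{\alpha,\psi}F$, composing with $\phi$ gives
\[
 F\circ\psi\circ\phi\ \overset d=\ C_{\alpha,\psi}\circ\phi\cdot F\circ\phi.
\]
Here I use that $f\mapsto f\circ\phi$ and multiplication by a deterministic analytic function are continuous maps on $\Hol$. Applying the hypothesis for $\phi$ to the right side, and using that the factor $C_{\alpha,\psi}\circ\phi$ is deterministic, yields
\[
 F\circ(\psi\circ\phi)\ \overset d=\ (C_{\alpha,\psi}\circ\phi)\,C_{\alpha,\phi}\,F .
\]
The hypothesis applied directly to $\psi\circ\phi$ gives $F\circ(\psi\circ\phi)\overset d=C_{\alpha,\psi\circ\phi}F$. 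By \eqref{eq:main-projective}, $C_{\alpha,\psi\circ\phi}=e^{-i\alpha\beta(\psi,\phi)}(C_{\alpha,\psi}\circ\phi)C_{\alpha,\phi}$. The multiplier $(C_{\alpha,\psi}\circ\phi)C_{\alpha,\phi}$ is zero-free, so dividing by it is a continuous bijection of $\Hol$. Hence $e^{-i\alpha\beta(\psi,\phi)}F\overset d=F$.

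Second, I show that the achievable phases generate all of $\T$. The argument is $\beta(\psi,\phi)=\Arg(1-\overline{a_\psi}\,\phi(0))$. Take $a_\psi=s\in(0,1)$ and $\phi(0)=it$ with $t\in(-1,1)$. Then $1-\overline{a_\psi}\phi(0)=1-ist$, so $\beta$ ranges continuously over an open interval around $0$. The set $\Theta=\{\theta: e^{i\theta}F\overset d=F\}$ is a closed subgroup of $\R$, closed by continuity of scalar multiplication and weak limits. It contains the nontrivial interval $\{-\alpha\beta\}$ because $\alpha>0$. It follows that $\Theta=\R$. I expect this to be the only genuine point: confirming that $\beta$ takes a continuum of values near $0$, that $\alpha\ne0$, and the closedness of $\Theta$.

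Third, I treat the statistic. Let $Y$ be Borel with $Y(e^{i\theta}f)=Y(f)$. Applying the measurable map $f\mapsto(f(0)/|f(0)|,Y(f))$, defined on $\{f(0)\ne0\}$, to $e^{i\theta}F\overset d=F$ gives $(e^{i\theta}U,Y)\overset d=(U,Y)$ with $U=F(0)/|F(0)|$. Averaging over $\theta$ with Fubini, exactly as in \eqref{eq:haar-average}, gives $\E[a(U)b(Y)]=\E b(Y)\int_\T a\,d\Haar$ for bounded Borel $a$ and $b$. Therefore $U$ is Haar and independent of $Y$.
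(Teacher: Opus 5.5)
Your proof is correct and follows the paper's argument. The paper likewise composes the normalized operators $\mathcal T_\phi f=f\circ\phi/C_{\alpha,\phi}$ and uses \eqref{eq:main-projective} to obtain $e^{-i\alpha\beta}F\overset d=F$, with $\beta=\Arg(1-irt)$ sweeping an interval about zero, and then finishes with the Haar averaging of \eqref{eq:haar-average}. Your closed-subgroup remark is harmless but unnecessary, since any subgroup of $\R$ containing an open interval about $0$ is already all of $\R$.
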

\begin{proof}
Put $\mathcal T_\phi f=f\circ\phi/C_{\alpha,\phi}$. The multiplier
identity~\eqref{eq:main-projective} gives
\[
 \mathcal T_\psi\mathcal T_\phi
       =e^{-i\alpha\beta(\phi,\psi)}\mathcal T_{\phi\circ\psi}.
\]
All three transformation operators preserve the law of $F$, so
$e^{-i\alpha\beta(\phi,\psi)}F\overset d=F$. Fix
$a_\phi=r\in(0,1)$ and choose $\psi$ with $\psi(0)=it$, $-1<t<1$.
Then $\beta(\phi,\psi)=\Arg(1-irt)$ ranges over an interval about zero.
Because $\alpha>0$, the corresponding phases generate $\T$. This proves
circular symmetry. With $U=F(0)/|F(0)|$, it follows that
$(U,Y)\overset d=(e^{i\theta}U,Y)$. Integrating bounded Borel test
functions against $d\theta/(2\pi)$ proves Haar distribution and
independence, exactly as in~\eqref{eq:haar-average}.
\end{proof}

\subsection{The independent scale and its normalization}
\label{subsec:independent-scale-proof}

\begin{proof}[Proof of Theorem~\ref{thm:geometric}(ii)--(iii)]
Let $F$ satisfy the hypotheses and covariance in part~(ii), and put
$\Xi=Z(F)$. The functions $F$ and $A_\alpha(\Xi)$ have the same divisor,
so their quotient extends to a zero-free analytic function. Define
\begin{equation}\label{eq:harmonic-comparison}
 h(z)=\log\left|\frac{F(z)}{A_\alpha(\Xi)(z)}\right|.
\end{equation}
This is a real harmonic field. At a common zero use the extended
quotient: if $a\in\Hol\setminus\{0\}$, then
$m(a,z)=\min\{j\ge0:a^{(j)}(z)\ne0\}$ is jointly Borel in $(a,z)$,
and the quotient there is
$f^{(m(a,z))}(z)/a^{(m(a,z))}(z)$. This proves joint Borel measurability
of the extended quotient and of $h$. Outside the full canonical event
they may be assigned fixed values.

For each fixed $\phi$, covariance and the zero-divisor map give
\[
 (\mathcal T_\phi F,\phi^{-1}\Xi)\overset d=(F,\Xi).
\]
By~\eqref{eq:A-transport}, the logarithmic modulus of the corresponding
canonical quotient is $h\circ\phi$. Hence $h\circ\phi\overset d=h$.
The assumed $L^2$ integrability and
$\log A_\alpha(\Xi)(0)=c_\alpha-T_0(\Xi)$ imply $h(0)\in L^2$.
Transitivity gives
\begin{equation}\label{eq:h-stationary-second}
 \E h(z)^2=\E h(0)^2<\infty,\qquad z\in\D.
\end{equation}
For each $r<1$, the pathwise harmonic mean identity and Fubini yield
\[
 \E\frac1{2\pi}\int_0^{2\pi}(h(re^{it})-h(0))^2dt
 =\frac1{2\pi}\int_0^{2\pi}\E h(re^{it})^2dt-\E h(0)^2=0.
\]
Continuity, followed by a countable dense set of radii, shows that
\begin{equation}\label{eq:random-scale}
 h(z)=B\quad(z\in\D),\qquad
 B=\log|F(0)|-c_\alpha+T_0(\Xi)\in L^2.
\end{equation}
The analytic quotient now has constant modulus. Thus, with
$U=F(0)/|F(0)|$,
\begin{equation}\label{eq:scale-on-divisor}
 F=e^B U A_\alpha(\Xi)\quad\text{almost surely}.
\end{equation}

We next prove independence, rather than assume it. The transformation
of the comparison field shows that $B(\mathcal T_\phi F)=B(F)$ almost
surely for each fixed $\phi$. Consequently
\begin{equation}\label{eq:scale-joint-invariance}
 (B,\phi^{-1}\Xi)\overset d=(B,\Xi).
\end{equation}
For a bounded Borel function $a$ on $\R$, choose a conditional expectation
$b(\xi)=\E[a(B)\mid\Xi=\xi]$. Applying~\eqref{eq:scale-joint-invariance}
with $\phi=g$, for every bounded Borel $f$,
\[
 \int b(\xi)f(\xi)\,dH_\alpha(\xi)
 =\int b(\xi)f(g^{-1}\xi)\,dH_\alpha(\xi)
 =\int b(g\xi)f(\xi)\,dH_\alpha(\xi).
\]
Thus $b\circ g=b$ almost surely. Ergodicity from
Lemma~\ref{lem:hyperbolic-mixing} makes $b$ constant, equal to $\E a(B)$.
Taking indicators of Borel sets proves that $B$ and $\Xi$ are independent.
There is no need to choose invariant versions for an uncountable family
of automorphisms.

The pair $(B,\Xi)$ is a Borel function of $F$ unchanged by constant phases.
Lemma~\ref{lem:forced-phase} makes $U$ Haar and independent of this pair.
It follows that $B$, $U$ and $\Xi$ are mutually independent. By
Theorem~\ref{thm:reconstruction}(ii), $U A_\alpha(\Xi)$ has law
$F_\alpha$.  Setting $S=e^B$ proves
\eqref{eq:independent-scale-classification}; moreover,
\eqref{eq:random-scale} shows that the representing scale automatically
satisfies $\log S\in L^2$.

Conversely, suppose that
$F\overset d=S F_\alpha$, where $S>0$ is independent of $F_\alpha$.
Since $F$ satisfies the standing assumption
$\E(\log|F(0)|)^2<\infty$ and
$\log|F_\alpha(0)|\in L^2$, the identity
\[
 \log|S F_\alpha(0)|
   =\log S+\log|F_\alpha(0)|
\]
shows that $\log S\in L^2$. Multiplication by the independent positive
constant $S$ preserves both the divisor law and the covariance of
$F_\alpha$. This proves the equivalence in part~(ii).
The expression for $B$ in~\eqref{eq:random-scale} is a fixed Borel
statistic of $F$, and for the representation $S F_\alpha$ it equals
$\log S$ almost surely. Hence the law of $S$ is unique.

Finally, independence gives
\begin{equation}\label{eq:scale-log-moments}
 \E\log|F(0)|=c_\alpha+\E\log S,\qquad
 \Var(\log|F(0)|)=v_\alpha+\Var(\log S).
\end{equation}
The canonical logarithmic moments follow by differentiating
\eqref{eq:main-barnes}, justified by the bilateral exponential moments
near zero in Proposition~\ref{prop:scalar}. The variance is minimal
precisely when $S$ is deterministic. The additional mean normalization
then gives $S=1$, proving part~(iii).
\end{proof}

In particular, prescribing $\E|F(0)|^{2u}=M_\alpha(u)$ on a real
neighborhood of zero gives uniqueness within the covariant class with
divisor law $H_\alpha$: these bilateral moments imply
\eqref{eq:geometric-normalization}. Appendix~\ref{sec:direct-normalization}
gives a direct proof of the normalized conclusion by averaging the Green
field. The stronger assertion that all scales are independent uses mixing.

\section{Continuity and the integer models}
\label{sec:continuity-integer}

The fixed-parameter construction is now complete. We first vary the
parameter using the same Green estimates, and then identify the integer
members of the family by geometric uniqueness. These are separate uses
of the constructed laws; neither is obtained by an exchange of the two
limits $N\to\infty$ and $\alpha_j\to\alpha$.

\subsection{Proof of Theorem~\ref{thm:continuity}(i)}
\label{subsec:continuity-proof}

\begin{proof}
Let $\alpha_j\to\alpha>0$, and restrict these parameters to
$[a,b]\Subset(0,\infty)$. The constants $c_\beta$ are bounded on this
interval, $I_\beta\le I_b$, and the Green variance bound is at most
$b\pi^2/12$. The canonical modulus formula and the sampling law therefore
give, for every $s<1$,
\[
 \sup_{\beta\in[a,b]}\sup_{|z|\le s}
              \E\log^+|F_\beta(z)|<\infty.
\]
The subharmonic mean argument of
Proposition~\ref{prop:finite-tightness} supplies compact-open tightness for
this family of infinite-volume functions. By
Lemma~\ref{lem:local-convergence}, the divisors
$\Xi_j=Z(F_{\alpha_j})$ converge in law to $H_\alpha$.

Take any joint subsequential limit
$(F_{\alpha_j},\Xi_j)\Longrightarrow(F,\Xi)$.
For fixed centers $z_1,\ldots,z_m$, use common continuous compactly
supported approximations $f_{i,n}$ of $q_{z_i}$ in $L^2(\nu_1)$.
The errors under every parameter in $[a,b]$ are bounded by
\[
 b\sum_{i=1}^m\norm{q_{z_i}-f_{i,n}}_{L^2(\nu_1)}^2,
\]
which tends to zero. For a fixed approximation the centering is
$\alpha_j\int f_{i,n}\,d\nu_1$, which converges to its value at
$\alpha$. Vague convergence and
Lemma~\ref{lem:joint-approximation} therefore give
\begin{equation}\label{eq:parameter-joint-green}
 (F_{\alpha_j},\Xi_j,T_{z_1,\alpha_j},\ldots,T_{z_m,\alpha_j})
 \Longrightarrow(F,\Xi,T_{z_1,\alpha},\ldots,T_{z_m,\alpha}).
\end{equation}
The parameter subscript on $T$ here indicates its intensity and its
canonical version.

The constants $d_{\alpha_j}(z)$ tend to $d_\alpha(z)$. Passing the
exponential modulus relation to~\eqref{eq:parameter-joint-green} shows
that $|F(z)|=\exp\{d_\alpha(z)-T_{z,\alpha}(\Xi)\}$ for each
fixed $z$. At zero this excludes the zero function, and continuity of the
zero-divisor map then gives $Z(F)=\Xi$. Comparison on a countable dense
set with $A_\alpha(\Xi)$ yields $F=UA_\alpha(\Xi)$ for some
$U\in\T$.

Each joint law $(F_{\alpha_j},\Xi_j)$ is unchanged by multiplying its
function coordinate by $e^{i\theta}$, by
Theorem~\ref{thm:reconstruction}. This continuous action preserves the
same equality in the limit. The Haar averaging
in~\eqref{eq:haar-average} again makes $U$ independent of $\Xi$.
Thus every joint cluster law is the canonical law at $\alpha$.
Tightness proves $F_{\alpha_j}\Longrightarrow F_\alpha$ in $\Hol$.
The argument does not require continuity of the Borel reconstruction map.
\end{proof}

\subsection{Proof of Theorem~\ref{thm:integer}(ii)}
\label{subsec:integer-proof}
The integer function-level limit was already established in  the proof of
\cite[Theorem~4]{Krishnapur2009} via the truncated-unitary approximation. For completeness, and to show
how the integer models fit the geometric characterization developed
here, we give an alternative identification. In the argument below,
the only external input we use from Krishnapur is the zero-divisor
identity: for independent standard complex Ginibre $m\times m$
coefficients, the zero divisor of their analytic matrix determinant
has law $H_m$, with the measure and kernel in~\eqref{eq:model}.

\begin{proof}
Let $M(z)=\sum_{n\ge0}G_nz^n$ and $D_m(z)=\det M(z)$.
For each $r<1$, $\E\sum_n\norm{G_n}r^n<\infty$, for any matrix norm.
Taking a countable sequence of $r$ tending to one proves almost sure local
absolute convergence of the matrix series. Hence $D_m\in\Hol$.
Absolute continuity of $G_0$ implies $D_m(0)=\det G_0\ne0$ almost surely.
The cited theorem gives $Z(D_m)\sim H_m$.

Every entry of $M$ is a proper centered Gaussian analytic function with
covariance $(1-z\bar w)^{-1}$ and vanishing pseudocovariance. By
\eqref{eq:kernel-transformation}, composition with $\phi$ gives the
same covariance as multiplication by $C_{1,\phi}$. Proper centered
Gaussian finite vectors are determined by these covariances, so the
corresponding entrywise finite-dimensional laws agree. Distinct entries
remain independent. Countable evaluation determines the analytic-function
law, and therefore
\[
 M\circ\phi\overset d=C_{1,\phi}M
 \quad\text{as matrix-valued analytic functions}.
\]
Taking determinants yields
$D_m\circ\phi\overset d=C_{1,\phi}^mD_m=C_{m,\phi}D_m$.

For the basepoint moments, apply Gram--Schmidt to the columns of $G_0$.
The squared norm of the $j$th residual, conditionally on the preceding
columns, has the gamma law of shape $m-j+1$ and scale one. Its conditional
law is independent of those columns by isotropy of the standard complex
Gaussian vector. The successive residual norms are consequently
independent, and
\[
 |\det G_0|^2\overset d=\prod_{j=1}^m\gamma_j,
 \qquad \gamma_j\sim\operatorname{Gamma}(j,1)\text{ independent}.
\]
It follows that
\begin{equation}\label{eq:ginibre-mellin}
 \E|D_m(0)|^{2u}
      =\prod_{j=1}^m\frac{\Gamma(j+u)}{\Gamma(j)}
      =\frac{G(m+1+u)}{G(m+1)G(1+u)}=M_m(u),
 \quad\operatorname{Re}u>-1.
\end{equation}

Finally, multiplying every coefficient matrix by $e^{i\theta/m}$ leaves
their joint law unchanged and multiplies $D_m$ by $e^{i\theta}$.
Its basepoint modulus and all its normalized logarithmic jets remain
unchanged, so Haar averaging also verifies their independence from the
phase directly. For the characterization it suffices to differentiate
\eqref{eq:ginibre-mellin} at zero: the bilateral exponential moments of
$\log|D_m(0)|$ justify the differentiation and give mean $c_m$ and
variance $v_m$. Theorem~\ref{thm:geometric}(iii) therefore identifies
$D_m\overset d=F_m$ in the whole space $\Hol$.
\end{proof}

The finite matrix model gives a direct interpretation of the same
normalization. At integer parameters, the following compact-open
function limit was already obtained in Krishnapur's proof of
Theorem~4
\cite[Section~5]{Krishnapur2009}. We record it here as a corollary of
the present continuous-parameter construction. The finite eigenvalue
law is the truncated-unitary law of~\cite{ZS2000}, stated as
Result~12 in~\cite{Krishnapur2009}.

\begin{corollary}\label{cor:unitary}
Let $V_N$ be the $N\times N$ principal truncation of a Haar-distributed
$U(N+m)$ matrix, with $m\ge1$ fixed. Then
\begin{equation}\label{eq:unitary-limit}
 N^{m/2}\frac{\det(zI_N-V_N)}{\det(I_N-zV_N^*)}
       \Longrightarrow \det\Bigl(\sum_{n\ge0}G_nz^n\Bigr)
       \quad\text{in }\Hol.
\end{equation}
\end{corollary}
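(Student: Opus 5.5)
The plan is to realize the left-hand side of \eqref{eq:unitary-limit} as the finite normalized Blaschke product $F_{N,m}$ of \eqref{eq:finite-function}, and then to combine Theorem~\ref{thm:realization}(i) with Theorem~\ref{thm:integer}(ii). By the truncated-unitary result of~\cite{ZS2000} (Result~12 in~\cite{Krishnapur2009}), the eigenvalues of $V_N$ have joint density proportional to $|\Delta|^2\prod_j(1-|z_j|^2)^{m-1}$ on $\D^N$. This is exactly \eqref{eq:finite-density} with $\alpha=m$, so the eigenvalue configuration has law $\Xi_{N,m}$. I would cite this and check only the exponent, which is $m-1$ for an $N\times N$ truncation of $U(N+m)$.

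Next I would rewrite the rational function. Almost surely $V_N$ is a strict contraction with no eigenvalue on $\T$. Writing $x_1,\dots,x_N$ for its eigenvalues, $\det(zI_N-V_N)=\prod_j(z-x_j)$. The eigenvalues of $V_N^*$ are $\bar x_j$ with multiplicity, so $\det(I_N-zV_N^*)=\prod_j(1-z\bar x_j)$. Hence
\[
 N^{m/2}\frac{\det(zI_N-V_N)}{\det(I_N-zV_N^*)}
 =N^{m/2}\prod_{j=1}^N\frac{z-x_j}{1-z\bar x_j},
\]
and this is precisely $F_{N,m}$ evaluated on the configuration of eigenvalues. The phase convention agrees with the one prescribed in \eqref{eq:finite-function}, because both sides are the same product with no extra constant. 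The determinant in the denominator is nonzero on $\D$, so the left side is a random element of $\Hol$, and it equals $F_{N,m}$ as functions.

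Then Theorem~\ref{thm:realization}(i) gives $F_{N,m}\Longrightarrow F_m$ in $\Hol$ along the full sequence. Theorem~\ref{thm:integer}(ii) gives $F_m\overset d=\det(\sum_n G_nz^n)$ in $\Hol$. Since convergence in distribution depends only on the laws, \eqref{eq:unitary-limit} follows.

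The only step needing care is the first one: the law of $\Xi_{N,m}$ must match the truncated-unitary eigenvalue law exactly, including the weight exponent $m-1$ and the normalization. Normalization is automatic, because both are probability densities proportional to the same function. The phase issue I would dispatch by noting that no conditional phase is involved: the identity holds samplewise, not only in law.
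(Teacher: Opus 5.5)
Your proposal is correct and is essentially the paper's proof: identify the eigenvalue law of $V_N$ with $\Xi_{N,m}$ via \cite{ZS2000}, use that the eigenvalues of $V_N^*$ are the conjugates counted with algebraic multiplicity to see that the determinant ratio is exactly $F_{N,m}$, and then apply Theorem~\ref{thm:realization}(i) followed by Theorem~\ref{thm:integer}(ii). One small slip: $V_N$ is not a strict contraction in operator norm when $N>m$, because the $m\times N$ lower-left block of the unitary has a nontrivial kernel and so $\norm{V_N}=1$; however, you only need that the eigenvalues lie in $\D$, which the density gives, and then $\det(I_N-zV_N^*)=\prod_j(1-z\bar x_j)\ne0$ for $|z|<1$.
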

\begin{proof}
The eigenvalues of $V_N$ have exactly the law $\Xi_{N,m}$ and lie in
$\D$ almost surely. Counting algebraic multiplicities, the eigenvalues
of $V_N^*$ are their complex conjugates, whether or not $V_N$ is normal.
Consequently the determinant ratio in~\eqref{eq:unitary-limit}, including
its factor $N^{m/2}$, is precisely the Blaschke product
$F_{N,m}$ in distribution as a function. Apply
Theorem~\ref{thm:realization} and then Theorem~\ref{thm:integer}(ii).
\end{proof}

\section{The Mellin--Fourier characterization}\label{sec:transforms}

The function law has already been constructed without using its joint
transforms. We now identify those transforms by changing measure at the
basepoint. The tilted point processes and their logarithmic jets are
first defined independently of the function. This also explains why no
polynomial moment assumption on the jets is needed in the characterization.

Our second characterization does not assume a zero law or covariance. Its
right-hand side is specified by an independently defined tilted
projection process. For real $u>-1$, put, for $z\ne0$,
\begin{equation}\label{eq:tilted-basis}
 \varphi_{n,\alpha,u}(z)=
 \frac{|z|^u(1-|z|^2)^{(\alpha-1)/2}z^n}
      {\sqrt{\pi B(n+u+1,\alpha)}},\qquad n\ge0,
\end{equation}
where $B$ is the beta function. The value at zero can be set arbitrarily.
These functions are orthonormal in $L^2(dm)$. Let $\Prob_{\alpha,u}$ be the
projection determinantal law with planar kernel
\begin{equation}\label{eq:tilted-kernel}
 \mathsf K_{\alpha,u}(z,w)=
       \sum_{n=0}^\infty\varphi_{n,\alpha,u}(z)
                           \overline{\varphi_{n,\alpha,u}(w)}.
\end{equation}
Thus $\Prob_{\alpha,0}=H_\alpha$. For $\Xi_{\alpha,u}\sim\Prob_{\alpha,u}$,
set $h_k(w)=\bar w^k-w^{-k}$ for $w\ne0$, and define
\begin{equation}\label{eq:dpp-jets}
 L_{k,\alpha,u}=
 \sum_{\substack{x\in\Xi_{\alpha,u}\\|x|\le r_0}}h_k(x)
 + L^2\!\!\lim_{R\uparrow1}
       \sum_{\substack{x\in\Xi_{\alpha,u}\\r_0<|x|<R}}h_k(x),
 \qquad 0<r_0<1.
\end{equation}
Only the exterior term is an $L^2$ limit. The interior sum is finite almost
surely. Section~\ref{subsec:tilted-statistics} proves existence and
independence of $r_0$, as well as the equivalent centered-cutoff definition.
Define its characteristic function by
\begin{equation}\label{eq:psi}
 \Psi_{\alpha,u}^{(K)}(\mathbf t)=
 \E_{\alpha,u}\exp\left\{i\operatorname{Re}
                 \sum_{k=1}^K\bar t_kL_{k,\alpha,u}\right\},
 \qquad \mathbf t\in\C^K.
\end{equation}

\begin{theorem}[Mellin--Fourier characterization]\label{thm:transform}
The law of $F_\alpha$ is the unique law of a random $F\in\Hol$ with
$F(0)\ne0$ almost surely for which there is an $\varepsilon\in(0,1)$ such
that, for every real $|u|<\varepsilon$, $q\in\Z$, $K\ge1$ and
$\mathbf t\in\C^K$, the following expectation is well defined and
\begin{equation}\label{eq:main-transform}
 \E\left[|F(0)|^{2u}
       \left(\frac{F(0)}{|F(0)|}\right)^q
       \exp\left\{i\operatorname{Re}\sum_{k=1}^K\bar t_kL_k(F)\right\}\right]
=\ind_{\{q=0\}}M_\alpha(u)
                          \Psi_{\alpha,u}^{(K)}(\mathbf t).
\end{equation}
The law of $F_\alpha$ satisfies this identity for every real $u>-1$.
\end{theorem}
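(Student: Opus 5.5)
Two things have to be shown: that $F_\alpha$ satisfies \eqref{eq:main-transform} for every real $u>-1$, and that any law satisfying it for $|u|<\varepsilon$ must be the law of $F_\alpha$.

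\textbf{Existence.} We start from the sampling identity \eqref{eq:sampling}, $F_\alpha=Ue^{c_\alpha-T_0(\Xi)}P_\Xi$, in which $U$ is Haar and independent of $\Xi$. The phase factor in \eqref{eq:main-transform} is $U^q$, so its conditional expectation is $\ind_{\{q=0\}}$; this disposes of $q\neq0$. For $q=0$, the left side becomes $\E_{H_\alpha}[X_\alpha^u\,e^{i\operatorname{Re}\sum\bar t_kJ_k(\Xi)}]$ with $X_\alpha=|F_\alpha(0)|^2$. So it suffices to prove the change-of-measure identity
\[
 \frac{X_\alpha^u}{M_\alpha(u)}\,dH_\alpha\ \text{pushed forward under }(J_1,\ldots,J_K)
 \;=\;\text{law of }(L_{1,\alpha,u},\ldots,L_{K,\alpha,u}).
\]
We prove this at finite $N$ first. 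Weighting the density \eqref{eq:finite-density} by $X_{N,\alpha}^u=N^{\alpha u}\prod|z_j|^{2u}$ gives again a Vandermonde-squared density, now with weight $|z|^{2u}(1-|z|^2)^{\alpha-1}$. This is the rank-$N$ truncation $\Xi_{N,\alpha,u}$ of $\Prob_{\alpha,u}$, and its normalizer is $M_{N,\alpha}(u)$ from \eqref{eq:finite-mellin}. The finite jets $L_k(F_{N,\alpha})=\sum h_k(x)$ by \eqref{eq:finite-jet-sum}, so at finite $N$ the identity is exact.

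To pass $N\to\infty$ we need two limits. On the tilted side, we show $\sum_{x\in\Xi_{N,\alpha,u}}h_k(x)\Rightarrow L_{k,\alpha,u}$. This repeats Lemma~\ref{lem:unweighted-coordinates} with the tilted intensity, which is radial and comparable to $\rho_\alpha$ near the boundary, so the variance bound of Lemma~\ref{lem:variance} and the zero angular mean of $h_k$ still apply. The interior is handled by the cutoff \eqref{eq:jet-origin-cutoff}, modified because the tilted intensity behaves like $|z|^{2u}$ at the origin, which is integrable since $u>-1$. This step also establishes existence and $r_0$-independence of \eqref{eq:dpp-jets}. On the untilted side, we need $\E[X_{N,\alpha}^u g(\text{jets})]\to\E[X_\alpha^ug(\text{jets})]$ for bounded continuous $g$. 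This follows from the joint convergence in Corollary~\ref{cor:finite-jets} together with uniform integrability of $X_{N,\alpha}^u$, which holds because \eqref{eq:balanced-barnes} bounds a slightly higher moment $p u$ with $pu>-1$. The finite identities therefore pass to the limit for every real $u>-1$.

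\textbf{Uniqueness.} Suppose $F$ satisfies \eqref{eq:main-transform} for $|u|<\varepsilon$. Setting $u=0$ and $\mathbf t=0$ shows the phase $U=F(0)/|F(0)|$ has Fourier coefficients $\ind_{\{q=0\}}$, so $U$ is Haar. Setting $u=0$ and $q\neq0$ with general $\mathbf t$ then shows $U$ is independent of the jets $(L_1,\ldots,L_K)$. For the modulus, we use that for fixed $\mathbf t$ the map $u\mapsto M_\alpha(u)\Psi^{(K)}_{\alpha,u}(\mathbf t)$ is the same as for $F_\alpha$. Hence the finite measures $\mu_u$, defined as the law of the jets weighted by $|F(0)|^{2u}$, coincide for $F$ and $F_\alpha$ on the strip $|u|<\varepsilon$.

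The main obstacle is to upgrade this agreement to equality of the joint law of $(\log|F(0)|,L_1,\ldots,L_K)$. We take $g$ bounded, a function of the jets. Then $u\mapsto\E[e^{2u\log|F(0)|}g]$ is finite on $(-\varepsilon,\varepsilon)$: $g$ is bounded and the $g\equiv1$ case gives $\E|F(0)|^{2u}=M_\alpha(u)<\infty$. It therefore extends analytically to the vertical strip $|\operatorname{Re}u|<\varepsilon$. Two such bilateral Laplace transforms of signed finite measures that agree on a real interval agree on the whole strip, and restricting to the imaginary axis gives equal Fourier transforms. The signed measures $g\,d\Prob_{\log|F(0)|}$ therefore agree for $F$ and $F_\alpha$, and since $g$ ranges over a determining class (characteristic functions of the jets, then by linearity and approximation), the joint law of $(\log|F(0)|,L_1,\ldots,L_K)$ coincides with that for $F_\alpha$. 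Together with the Haar phase independent of these coordinates, $(F(0),L_1,\ldots,L_K)$ matches the corresponding vector for $F_\alpha$ for every $K$.

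Finally, by Lemma~\ref{lem:analytic-coordinates} these vectors are continuous triangular functions of the Taylor coefficients $(a_0,\ldots,a_K)$, and $\mathcal B(\Hol)=\sigma(a_0,a_1,\ldots)$. Hence all finite-dimensional Taylor laws agree, and $F\overset d=F_\alpha$.
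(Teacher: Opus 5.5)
Your existence argument is correct but takes a different route from the paper. You make the jet identity exact at finite $N$ (the $X_{N,\alpha}^u$-tilt of \eqref{eq:finite-density} is the rank-$N$ tilted projection). You then pass to the limit on both sides, which requires a tilted version of the joint jet convergence in Lemma~\ref{lem:unweighted-coordinates}. The ingredients you list suffice: $\rho_{N,\alpha,u}\le\rho_{\alpha,u}$, zero annular means of $h_k$, and $\sup_N\Prob\{\Xi_{N,\alpha,u}(D_\varepsilon)>0\}\le\int_{D_\varepsilon}\rho_{\alpha,u}\,dm\le C\varepsilon^{2u+2}$. The paper instead passes to the limit only on configuration tests. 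This gives the density \eqref{eq:tilted-RN} and the equivalence of $\Prob_{\alpha,u}$ with $H_\alpha$, and the paper then identifies $L_{k,\alpha,u}=L_k(P_\Xi)$ pathwise through almost sure radial limits (Lemma~\ref{lem:tilted-jets}). The paper's route yields a stronger configuration-level statement and needs no distributional limit theorem for tilted finite jets. Yours stays entirely at the level of finite-dimensional jet laws.

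The uniqueness part has a gap in its last step. You extract the phase only at $u=0$. This shows that $U=F(0)/|F(0)|$ is Haar and independent of $Y=(L_1(F),\ldots,L_K(F))$, and you separately identify the law of $(\log|F(0)|,Y)$. You then assert that $U$ is independent of $(\log|F(0)|,Y)$. But at fixed $K$, the laws of the pairs $(U,Y)$ and $(\log|F(0)|,Y)$ do not determine the law of the triple. Nothing you derived excludes a phase whose conditional law given $(\log|F(0)|,Y)$ is not Haar but averages to Haar given $Y$ alone. The hypotheses with $q\ne0$ and $u\ne0$, which you never use, are the direct way to rule this out.

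The repair is to keep $q$ inside your strip continuation. For $q\ne0$, the bounded test function $U^q\exp\{i\operatorname{Re}\sum_k\bar t_kL_k(F)\}$ has $u$-transform vanishing on $(-\varepsilon,\varepsilon)$, hence on the whole strip and at $u=is$. So for every $q$, the complex measure $U^q\,d\Prob$ pushed forward to $(\log|F(0)|,Y)$ is identified, and density of trigonometric polynomials in $C(\T)$ then gives the joint law of $(\log|F(0)|,U,Y)$. This is exactly how the paper's Lemma~\ref{lem:bilateral-uniqueness} is organized.

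Alternatively, you could use all $K$ at once. For the canonical law, $\log|F_\alpha(0)|=c_\alpha-T_0(\Xi)$ is a Borel function of the full jet sequence, since the jets determine $P_\Xi$ and $\Xi=Z(P_\Xi)$. This relation transfers to $F$ once the law of $(\log|F(0)|,(L_k(F))_{k\ge1})$ is identified. That is an extra argument you would have to supply.
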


Only real tilts appear in the prescribed transform. In the uniqueness
proof, analytic continuation is applied to the transforms of two candidate
laws, not to a presumed complex extension of
$\Psi_{\alpha,u}^{(K)}$.

\subsection{Tilted projections and centered logarithmic jets}
\label{subsec:tilted-statistics}

Fix $\alpha>0$ and a real $u>-1$. Recall the planar orthonormal functions
$\varphi_{n,\alpha,u}$ in~\eqref{eq:tilted-basis}. Their first $N$ terms
form a projection kernel, denoted by $\mathsf K_{N,\alpha,u}$. The
following bounds handle both the singularity allowed at the origin and
the accumulation of points near the boundary.

\begin{lemma}\label{lem:tilted-projections}
The kernel $\mathsf K_{\alpha,u}$ is a locally trace-class orthogonal
projection. The laws of the projections $\mathsf K_{N,\alpha,u}$ converge
weakly on $\Conf$ to $\Prob_{\alpha,u}$. Their intensities satisfy
\begin{equation}\label{eq:tilted-intensity}
 \begin{split}
  \rho_{N,\alpha,u}(z)&\le\rho_{\alpha,u}(z),\\
  \rho_{\alpha,u}(z)
    &=\frac{|z|^{2u}(1-|z|^2)^{\alpha-1}}{\pi}
         \sum_{n=0}^\infty\frac{|z|^{2n}}{B(n+u+1,\alpha)}
      \le C_{\alpha,J}\frac{|z|^{2u}}{(1-|z|^2)^2},
 \end{split}
\end{equation}
uniformly for $u$ in a compact interval $J\subset(-1,\infty)$ and
$z\ne0$. Moreover, the finite projection law is
\begin{equation}\label{eq:finite-tilt}
 \frac{dQ_{N,u}}{d\Law(\Xi_{N,\alpha})}(\xi)
     =\frac{X_N(\xi)^u}{M_{N,\alpha}(u)},
 \qquad X_N(\xi)=N^\alpha\prod_{x\in\xi}|x|^2.
\end{equation}
\end{lemma}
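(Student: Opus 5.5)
The plan is to treat the tilted family exactly as the untilted one in Section~\ref{subsec:finite-ensembles}: everything is radial, so orthonormality, the intensity bound, and the change of measure reduce to one-variable beta integrals. First I will check orthonormality of $\varphi_{n,\alpha,u}$ in $L^2(dm)$. Angular integration kills the cross terms. Polar coordinates with $t=|z|^2$ give
\[
\int|z|^{2u+2n}(1-|z|^2)^{\alpha-1}dm=\pi B(n+u+1,\alpha),
\]
which is finite exactly because $u>-1$. Hence $\mathsf K_{\alpha,u}$ is the orthogonal projection onto the closed span of these functions. The diagonal is the displayed series for $\rho_{\alpha,u}$, and $\rho_{N,\alpha,u}\le\rho_{\alpha,u}$ holds because the finite kernel is a partial sum of nonnegative terms.

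For the intensity bound I would use $1/B(n+u+1,\alpha)=\Gamma(n+u+1+\alpha)/(\Gamma(\alpha)\Gamma(n+u+1))\le C_{\alpha,J}(n+1)^{\alpha}$. This follows from uniform gamma-ratio asymptotics, with $u$ in the compact $J$ and $n+u+1$ bounded away from $0$. Summing $\sum_n(n+1)^\alpha t^n\le C_\alpha(1-t)^{-\alpha-1}$ and multiplying by $(1-t)^{\alpha-1}$ gives $C_{\alpha,J}|z|^{2u}(1-|z|^2)^{-2}$. Since $2u>-2$, this bound is locally integrable, even at the origin. The kernel is therefore locally trace class: on a compact $A\Subset\D$, the localized projection is positive and its trace is $\int_A\rho_{\alpha,u}\,dm<\infty$.

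For weak convergence I would repeat the proof of Lemma~\ref{lem:local-convergence}. The finite kernels converge to $\mathsf K_{\alpha,u}$ pointwise off the origin, and locally uniformly after removing the common factor $|z|^u|w|^u$. The Laplace-functional expansion~\eqref{eq:laplace-functional} is dominated term by term, via Hadamard, by $(\int_{\supp v}\rho_{\alpha,u}dm)^k/k!$. Dominated convergence inside each $k$-fold integral uses the integrable majorant $\prod_j\rho_{\alpha,u}(z_j)$. Tightness comes from the common first-moment bound, as before.

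For~\eqref{eq:finite-tilt}, I would write the $N$-point density of $\mathsf K_{N,\alpha,u}$ as $|\det[\varphi_{n,\alpha,u}(z_j)]|^2/N!$. Up to the constant ratio $\prod_n B(n+1,\alpha)/B(n+u+1,\alpha)$, this equals~\eqref{eq:finite-density} multiplied by $\prod_j|z_j|^{2u}=N^{-\alpha u}X_N^u$. The normalizing constant is then forced, since both are probability densities. Equivalently, by~\eqref{eq:finite-mellin} it equals $M_{N,\alpha}(u)^{-1}$, and this consistency serves as a check.

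The main obstacle is the uniformity of the gamma-ratio bound near $u\to-1$ and the convergence at the origin. I would handle the origin by noting that the singular factor $|z|^{2u}$ is common to all $N$ and integrable, so it sits inside the dominating function rather than requiring uniform kernel convergence there.
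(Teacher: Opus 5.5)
Your proposal is correct and follows the paper's proof essentially step for step: orthonormality by polar integration, $\rho_{N,\alpha,u}\le\rho_{\alpha,u}$ from partial sums, local trace class from local integrability of $|z|^{2u}$, and weak convergence by the Laplace-series argument with the Hadamard majorant $\prod_j\rho_{\alpha,u}(z_j)$ (the paper's second convergence argument). The finite tilt also goes the same way, by the Vandermonde comparison with normalizer forced to be $\E X_N(\Xi_{N,\alpha})^u=M_{N,\alpha}(u)$; your bound $1/B(n+u+1,\alpha)\le C_{\alpha,J}(n+1)^\alpha$ just repackages the paper's bounded beta ratios against the $u=0$ series, and the worry about $u\to-1$ does not arise because $J$ is compact in $(-1,\infty)$.
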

\begin{proof}
Polar integration proves orthonormality. The sum of the rank-one
projections is the orthogonal projection onto their closed span. The
beta-function ratios obey
\[
 \frac{B(n+1,\alpha)}{B(n+u+1,\alpha)}=1+O_{\alpha,J}((n+1)^{-1}).
\]
They are positive and bounded above and below, uniformly in $n\ge0$ and
$u\in J$; for finitely many $n$ use continuity and positivity of the beta
function. Comparing the diagonal series with its $u=0$ counterpart gives
\eqref{eq:tilted-intensity}. In particular the intensity is integrable on
every compact subset of the disk, including at the origin since $u>-1$.
This proves local trace class.

For fixed $u$, the restricted finite and infinite projections have a
positive difference, whose trace on a compact set is the integral of
$\rho_{\alpha,u}-\rho_{N,\alpha,u}$. That integral tends to zero by
monotone convergence. Alternatively, the kernel series converges locally
uniformly away from zero, and the determinant of each local correlation
matrix is bounded by the product of the intensities. Near zero this
product is integrable by~\eqref{eq:tilted-intensity}. The Laplace series
argument of Lemma~\ref{lem:local-convergence} therefore applies on compact
sets containing zero as well. It proves convergence on $\Conf$; no
uniform bound on the kernel at zero is being asserted for negative $u$.

Finally, multiply the density~\eqref{eq:finite-density} by
$\prod_j|z_j|^{2u}$ and renormalize. Orthogonality of monomials gives the
density
\[
 \frac{|\Delta(z_1,\ldots,z_N)|^2}
      {N!\prod_{n=0}^{N-1}\pi B(n+u+1,\alpha)}
 \prod_{j=1}^N |z_j|^{2u}(1-|z_j|^2)^{\alpha-1}
\]
with respect to $dm(z_1)\cdots dm(z_N)$. It is the density of the projection
$\mathsf K_{N,\alpha,u}$. The factors $N^{\alpha u}$ cancel in the
normalization, and Proposition~\ref{prop:scalar} identifies that
normalization as $M_{N,\alpha}(u)$. This gives~\eqref{eq:finite-tilt}.
\end{proof}

The only possible failure of jet moments comes from points near zero.
Boundary regularization is a different issue and is controlled by a
square-integrable exterior symbol.

\begin{lemma}\label{lem:tilted-jets}
The variables in~\eqref{eq:dpp-jets} are well defined, jointly for every
finite number of indices, and do not depend on $r_0$. For each $k\ge1$
and $r_0<R<S<1$,
\begin{equation}\label{eq:tilted-jet-tail}
 \E_{\alpha,u}\left|
       \sum_{\substack{x\in\Xi_{\alpha,u}\\R<|x|<S}}h_k(x)
                         \right|^2
       \le C_{\alpha,u,k,r_0}(S^2-R^2).
\end{equation}
Along $R_n=1-2^{-4n}$, the radial sums converge almost surely to
$L_{k,\alpha,u}$, simultaneously for all $k$.

More generally, let $\chi_n$ be bounded, compactly supported measurable
cutoffs and suppose
\begin{equation}\label{eq:tilted-cutoff-condition}
 \int_{|w|>r_0}|1-\chi_n(w)|^2|h_k(w)|^2
                              \rho_{\alpha,u}(w)\,dm(w)\longrightarrow0.
\end{equation}
The centered exterior statistics of
$\chi_n h_k\ind_{\{|w|>r_0\}}$ then have the same $L^2$ limit as the
exterior term in~\eqref{eq:dpp-jets}.
\end{lemma}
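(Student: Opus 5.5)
The plan follows the proof of Lemma~\ref{lem:unweighted-coordinates}, with the tilted intensity bound~\eqref{eq:tilted-intensity} replacing~\eqref{eq:intensity-domination}. First I will show that $h_k$ has zero mean on every radial annulus and controlled size near the boundary. Each of $\bar w^k$ and $w^{-k}$ has zero angular mean on every circle, and $\rho_{\alpha,u}$ is radial. Hence $\int_{R<|w|<S}h_k\rho_{\alpha,u}\,dm=0$, and the centered and uncentered annular sums coincide. For $|w|>r_0$, the bound $|h_k(w)|\le C_{k,r_0}(1-|w|^2)$ holds, since $|w|^{-k}-|w|^k$ and the argument factor are both controlled. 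Combined with $\rho_{\alpha,u}\le C|w|^{2u}(1-|w|^2)^{-2}$ and $|w|>r_0$, this gives
\[
 \int_{R<|w|<S}|h_k|^2\rho_{\alpha,u}\,dm\le C_{\alpha,u,k,r_0}(S^2-R^2).
\]
Lemma~\ref{lem:variance} applies to the locally trace-class projection $\mathsf K_{\alpha,u}$ of Lemma~\ref{lem:tilted-projections}, and yields~\eqref{eq:tilted-jet-tail}.

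Next, existence. By~\eqref{eq:tilted-jet-tail} the exterior sums form a Cauchy net in $L^2$ as $R\uparrow1$, so the $L^2$ limit in~\eqref{eq:dpp-jets} exists. The interior sum is over the finitely many points in $\overline D_{r_0}$. It is finite almost surely because the intensity is locally integrable at the origin (this uses $u>-1$), so the configuration almost surely avoids $0$ and the circle $|x|=r_0$. For independence of $r_0$, take $r_0<r_0'$. The two definitions differ by moving the finite annular sum over $r_0<|x|\le r_0'$ from the exterior term into the interior term. That annular sum is a bounded statistic with finite second moment, since its symbol is bounded there, so the $L^2$ limits agree up to that term and the two expressions coincide almost surely.

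For almost sure convergence along $R_n=1-2^{-4n}$, the squared $L^2$ errors are at most $C(1-R_n^2)\le C'2^{-4n}$, which is summable. Chebyshev's inequality and Borel--Cantelli therefore give almost sure convergence for each $k$. Intersecting over countably many $k$ gives the simultaneous statement, and the joint definition for finitely many indices is then immediate.

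For the cutoff statement, the $L^2$ extension in Lemma~\ref{lem:variance} is a contraction from $L^2(\rho_{\alpha,u}dm)$. Here $h_k\ind_{\{|w|>r_0\}}$ lies in $L^2(\rho_{\alpha,u}dm)$ by the bound above, and its centered statistic equals the exterior term. The reason is that the exterior term is the $L^2$ limit of the statistics of the truncations $h_k\ind_{\{r_0<|w|<R\}}$, which converge in $L^2(\rho_{\alpha,u})$ and have zero mean. Hypothesis~\eqref{eq:tilted-cutoff-condition} says that $\chi_nh_k\ind_{\{|w|>r_0\}}\to h_k\ind_{\{|w|>r_0\}}$ in $L^2(\rho_{\alpha,u}dm)$, so the contraction gives convergence of the centered statistics. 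I expect the only delicate point to be justifying that the radial-cutoff $L^2$ limit coincides with the abstract completion $\X(h_k\ind_{\{|w|>r_0\}})$, and that the bounded compactly supported cutoffs fall under the initial definition of $\X$. Both follow from Lemma~\ref{lem:variance} once the annular truncations are recognized as approximants in $L^2(\rho_{\alpha,u}dm)$.
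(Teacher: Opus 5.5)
Your proposal is correct and follows the paper's proof essentially step for step: the boundary bound $|h_k(w)|\le C_{k,r_0}(1-|w|^2)$ combined with \eqref{eq:tilted-intensity}, vanishing annular means from radiality, Lemma~\ref{lem:variance} for the tail estimate and the Cauchy property, Borel--Cantelli along $R_n$, and the $L^2$ contraction for the cutoff assertion. Your treatment of independence of $r_0$ is the paper's argument spelled out more explicitly: the moved annular sum is a fixed square-integrable term, independent of $R$.
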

\begin{proof}
On $|w|\ge r_0$, the elementary bound used for
\eqref{eq:analytic-symbol-bound} gives
$|h_k(w)|\le C_{k,r_0}(1-|w|^2)$. By
\eqref{eq:tilted-intensity}, this exterior symbol belongs to
$L^2(\rho_{\alpha,u}\,dm)$. Its integral over every radial annulus
vanishes: the intensity is radial and both angular modes of $h_k$ have
nonzero frequency. Lemma~\ref{lem:variance}, applied to the tilted
projection itself, proves~\eqref{eq:tilted-jet-tail} and the radial
$L^2$ Cauchy property.

The process is locally finite and has no point at zero, since its
intensity is absolutely continuous. The interior term in
\eqref{eq:dpp-jets} is consequently a finite sum almost surely, without
any assertion that it is square integrable. Changing $r_0$ moves a finite
annulus from one term to the other; its centered mean is zero. This proves
independence of the split. Letting $S\uparrow1$ in the tail estimate
bounds the squared error at $R_n$ by $C(1-R_n^2)$. These errors are
summable, so Chebyshev's inequality and Borel--Cantelli give almost sure
convergence, first for each $k$ and then simultaneously for all $k$.
Finally, the contraction~\eqref{eq:variance-bound} applied to the
$L^2$ error in~\eqref{eq:tilted-cutoff-condition} proves the last assertion.
Thus nonradial cutoffs are allowed, provided their exterior statistics
are correctly centered.
\end{proof}

\subsection{The basepoint change of measure}\label{subsec:change-measure}

We next compare the tilted law just constructed with $H_\alpha$. The
normalizing statistic in this comparison is the canonical $T_0$ centered
with respect to the \emph{untilted} intensity $\nu_\alpha$.

\begin{proposition}\label{prop:tilted-law}
For every real $u>-1$,
\begin{equation}\label{eq:tilted-RN}
 \frac{d\Prob_{\alpha,u}}{dH_\alpha}(\xi)
       =\frac{\exp\{2u(c_\alpha-T_0(\xi))\}}{M_\alpha(u)}.
\end{equation}
These two configuration laws are equivalent. On the canonical full
measure set, the tilted jets satisfy
\begin{equation}\label{eq:tilted-canonical-jets}
       L_{k,\alpha,u}=L_k(P_\Xi),\qquad k\ge1.
\end{equation}
Furthermore, if the pair $(F_{N,\alpha},\Xi_{N,\alpha})$ is given the
weight $|F_{N,\alpha}(0)|^{2u}/M_{N,\alpha}(u)$, its law converges weakly
to the law of $(F_\alpha,Z(F_\alpha))$ weighted by
$|F_\alpha(0)|^{2u}/M_\alpha(u)$.
\end{proposition}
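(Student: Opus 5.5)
The plan is to pass to the limit in the exact finite change of measure \eqref{eq:finite-tilt}, carrying the function coordinate and the basepoint statistic together. Fix real $u>-1$ and write $Q_{N,u}$ for the finite tilted law. By \eqref{eq:cN-green}, the finite weight equals $|F_{N,\alpha}(0)|^{2u}/M_{N,\alpha}(u)=\exp\{2u(c_{N,\alpha}-T_{0,N})\}/M_{N,\alpha}(u)$. Let $\Phi$ be a bounded continuous function on $\Hol\times\Conf$. Theorem~\ref{thm:realization}(i) gives $(F_{N,\alpha},\Xi_{N,\alpha})\Longrightarrow(F_\alpha,Z(F_\alpha))$, and $|F(0)|^{2u}$ is continuous at the nonzero limiting basepoint. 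We therefore have $\Phi\,|F_{N,\alpha}(0)|^{2u}\Longrightarrow\Phi\,|F_\alpha(0)|^{2u}$. To convert this into convergence of expectations we need uniform integrability. Choose $p>1$ with $pu>-1$. Then \eqref{eq:balanced-barnes} bounds $\sup_N\E|F_{N,\alpha}(0)|^{2pu}$, and $M_{N,\alpha}(u)\to M_\alpha(u)$. These give the last assertion of the proposition: the weighted pair laws converge to the law of $(F_\alpha,Z(F_\alpha))$ weighted by $|F_\alpha(0)|^{2u}/M_\alpha(u)$. This weight has mean one by \eqref{eq:main-barnes}.

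Next we identify the configuration marginal. By Lemma~\ref{lem:tilted-projections}, the $\Conf$-marginals $Q_{N,u}$ converge to $\Prob_{\alpha,u}$. The weighted limit just obtained has configuration marginal with density $\E[|F_\alpha(0)|^{2u}\mid\Xi]/M_\alpha(u)$ with respect to $H_\alpha$. By \eqref{eq:intrinsic-identities}, $|F_\alpha(0)|=e^{c_\alpha-T_0(\Xi)}$ is already $\Xi$-measurable, so this density is $\exp\{2u(c_\alpha-T_0(\xi))\}/M_\alpha(u)$. Uniqueness of weak limits on the Polish space $\Conf$ then yields \eqref{eq:tilted-RN}. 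The density is strictly positive and finite $H_\alpha$-almost surely, which gives equivalence of the two laws.

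For the jets, the canonical set $\mathscr G_\alpha$ has full $H_\alpha$ measure, hence full $\Prob_{\alpha,u}$ measure by equivalence. On that set \eqref{eq:canonical-jets} gives $L_k(P_\xi)$ as the radial limit along $R_n$ of $\sum_{|x|<R_n}h_k(x)$. Lemma~\ref{lem:tilted-jets} shows that, $\Prob_{\alpha,u}$-almost surely, the same radial sums along the same sequence converge to $L_{k,\alpha,u}$, simultaneously for all $k$. Both are almost sure limits of the same literal sums on a common full event, so they coincide, which is \eqref{eq:tilted-canonical-jets}.

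I expect the main obstacle to be the uniform integrability near $u\downarrow-1$, together with making sure the limit weight is exactly $\Xi$-measurable. The first is handled by taking $p$ close to $1$ and using the uniform Barnes asymptotics on a compact neighborhood in $\{\operatorname{Re}u>-1\}$. The second relies on the reconstruction identity in Theorem~\ref{thm:realization}(ii), so it involves no subtlety beyond quoting that result. No additional cutoff argument is needed because the tilt is a function of the basepoint alone.
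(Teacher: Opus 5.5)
Your proposal is correct and follows essentially the same route as the paper: uniform integrability of the weights via $p>1$ with $pu>-1$, weighted joint convergence at the nonzero limiting basepoint, identification of the configuration marginal through \eqref{eq:finite-tilt}, Lemma~\ref{lem:tilted-projections} and the $\Xi$-measurability of $|F_\alpha(0)|$ from \eqref{eq:intrinsic-identities}, and then the jet identity obtained by comparing almost sure limits of the same radial sums along $R_n$ on a set of full $\Prob_{\alpha,u}$ measure. The only difference is cosmetic: you phrase the limiting density as a conditional expectation given $\Xi$, which collapses at once because the weight is already a function of $\Xi$.
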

\begin{proof}
Choose $p>1$ such that $pu>-1$. Proposition~\ref{prop:scalar} gives
\[
 \sup_N\E|F_{N,\alpha}(0)|^{2pu}<\infty.
\]
Thus the weights $|F_{N,\alpha}(0)|^{2u}$ are uniformly integrable,
including when $u<0$. By Theorem~\ref{thm:realization}, the unweighted
pairs converge jointly, and their limiting function is nonzero at zero.
The weight is continuous at every such function. Truncation of the weight,
followed by uniform integrability, therefore gives, for bounded continuous
$B$ on $\Hol\times\Conf$,
\begin{equation}\label{eq:weighted-joint-limit}
 \frac{\E\bigl[|F_{N,\alpha}(0)|^{2u}
                      B(F_{N,\alpha},\Xi_{N,\alpha})\bigr]}
                 {M_{N,\alpha}(u)}\longrightarrow
   \frac{\E\bigl[|F_\alpha(0)|^{2u}
                      B(F_\alpha,Z(F_\alpha))\bigr]}{M_\alpha(u)}.
\end{equation}
This proves the weighted joint convergence.

For a test function depending only on the configuration, the law on the
left is $Q_{N,u}$ by~\eqref{eq:finite-tilt}; its limit is
$\Prob_{\alpha,u}$ by Lemma~\ref{lem:tilted-projections}. On the right,
Theorem~\ref{thm:reconstruction} gives
$|F_\alpha(0)|^{2u}=\exp\{2u(c_\alpha-T_0(\Xi))\}$ on the same
configuration $\Xi$. Equality on bounded continuous configuration tests,
which determine probability measures, proves~\eqref{eq:tilted-RN}.
The density is finite and strictly positive $H_\alpha$-almost surely,
so the two laws are equivalent.

The canonical set in Proposition~\ref{prop:canonical} consequently has
full $\Prob_{\alpha,u}$ measure. On it the radial sums along $R_n$
converge to $L_k(P_\Xi)$ by~\eqref{eq:canonical-jets}. By
Lemma~\ref{lem:tilted-jets}, those same sums converge almost surely to
$L_{k,\alpha,u}$. This proves~\eqref{eq:tilted-canonical-jets}.
The equivalence of measures is used here only for almost sure identities.
The $L^2$ assertion for exterior statistics was established separately
under $\Prob_{\alpha,u}$ in Lemma~\ref{lem:tilted-jets}.
\end{proof}

\subsection{The infinite joint transform}\label{subsec:infinite-transform}

Recall that $F_\alpha=U A_\alpha(\Xi)$, where $\Xi\sim H_\alpha$ and
$U$ is an independent Haar phase. Both $|F_\alpha(0)|$ and its normalized
logarithmic jets are measurable functions of $\Xi$. This makes the
joint transform a consequence of the preceding change of measure.

\begin{proposition}\label{prop:infinite-transform}
The identity~\eqref{eq:main-transform} holds for $F_\alpha$ for every
real $u>-1$, integer $q$, and finite vector $\mathbf t\in\C^K$.
\end{proposition}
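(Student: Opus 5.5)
We use the representation $F_\alpha=UA_\alpha(\Xi)$ from Theorem~\ref{thm:reconstruction}, with $\Xi\sim H_\alpha$ and $U\sim\Haar(\T)$ independent. On the canonical set $\mathscr G_\alpha$ we have $|F_\alpha(0)|=e^{c_\alpha-T_0(\Xi)}$, $F_\alpha(0)/|F_\alpha(0)|=U$, and $L_k(F_\alpha)=L_k(P_\Xi)$ for every $k$, since a constant multiple does not change logarithmic jets. Hence the integrand in~\eqref{eq:main-transform} factors as $U^q\cdot\Phi(\Xi)$, where
\[
 \Phi(\xi)=e^{2u(c_\alpha-T_0(\xi))}
   \exp\Bigl\{i\operatorname{Re}\sum_{k=1}^K\bar t_kL_k(P_\xi)\Bigr\}.
\]

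The first step is well-definedness. The modulus of the integrand equals $|F_\alpha(0)|^{2u}$, which is integrable for real $u>-1$ by Theorem~\ref{thm:realization}(iii). The expectation is therefore absolutely convergent, and no moment of the jets is needed.

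The second step is to take the conditional expectation given $\Xi$. By independence, $\E[U^q]=\ind_{\{q=0\}}$, so the left side equals $\ind_{\{q=0\}}\E\Phi(\Xi)$. For $q\ne0$ both sides vanish.

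For $q=0$, write $\E\Phi(\Xi)=M_\alpha(u)\int\exp\{i\operatorname{Re}\sum\bar t_kL_k(P_\xi)\}\,\frac{e^{2u(c_\alpha-T_0(\xi))}}{M_\alpha(u)}\,dH_\alpha(\xi)$. The ratio in the integrand is the density in Proposition~\ref{prop:tilted-law}, so this becomes $M_\alpha(u)\,\E_{\alpha,u}\exp\{i\operatorname{Re}\sum\bar t_kL_k(P_\Xi)\}$ with $\Xi\sim\Prob_{\alpha,u}$. The two laws are equivalent, so $\mathscr G_\alpha$ has full $\Prob_{\alpha,u}$ measure. There \eqref{eq:tilted-canonical-jets} gives $L_k(P_\Xi)=L_{k,\alpha,u}$ almost surely, and the expectation is $\Psi^{(K)}_{\alpha,u}(\mathbf t)$ from~\eqref{eq:psi}.

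The main point needing care is transferring the almost sure identity $L_k(P_\Xi)=L_{k,\alpha,u}$, which concerns specific Borel versions, to the tilted law. This is exactly what Proposition~\ref{prop:tilted-law} provides through the equivalence of measures together with the common radial sequence $R_n$. The remaining steps are bookkeeping with Fubini and bounded test functions.
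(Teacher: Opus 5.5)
Your proposal is correct and follows essentially the same route as the paper's proof. You use the sampling representation $F_\alpha=UA_\alpha(\Xi)$, bound the integrand's modulus by $|F_\alpha(0)|^{2u}$, average out $U^q$, and then apply the density~\eqref{eq:tilted-RN} together with the jet identity~\eqref{eq:tilted-canonical-jets}.
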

\begin{proof}
The expectation is well defined, since the Fourier factors have
modulus one and $\E|F_\alpha(0)|^{2u}=M_\alpha(u)<\infty$.
Conditional on $\Xi$, averaging $U^q$ gives zero when $q\ne0$. When
$q=0$, the expectation equals
\[
 \E_{H_\alpha}\left[
 e^{2u(c_\alpha-T_0(\Xi))}
 \exp\left\{i\operatorname{Re}\sum_{k=1}^K\bar t_kL_k(P_\Xi)\right\}
                         \right].
\]
Apply~\eqref{eq:tilted-RN} and~\eqref{eq:tilted-canonical-jets}. The result
is $M_\alpha(u)\Psi_{\alpha,u}^{(K)}(\mathbf t)$, as required.
\end{proof}

The same identities are the limits of the corresponding finite weighted
expectations. Indeed, the joint finite jets converge by
Theorem~\ref{thm:realization}, and the weights have the extra moment used
in~\eqref{eq:weighted-joint-limit}. Their convergence does not require
moments of $L_k$ or an evaluation of a finite determinant.

\subsection{Proof of Theorem~\ref{thm:transform}}\label{subsec:transform-proof}

For completeness, we state the transform uniqueness fact in the form
needed here. It applies to finite candidate vectors whether or not they
are already known to arise from analytic functions.

\begin{lemma}\label{lem:bilateral-uniqueness}
Let $(\ell,V,Y)$ have a probability law on
$\R\times\T\times\R^d$, and suppose
$\E e^{a|\ell|}<\infty$ for some $a>0$. Its law is determined by
\[
 \E[e^{u\ell}V^q e^{it\cdot Y}],
 \qquad |u|<a,\quad q\in\Z,\quad t\in\R^d.
\]
In particular, equality of these transforms on any real neighborhood of
zero implies equality of laws, when both laws have such an exponential
moment.
\end{lemma}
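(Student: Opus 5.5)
The plan is to reduce the statement to the classical fact that a probability measure on a locally compact abelian group, here $\R\times\T\times\R^d$, is determined by its Fourier transform. Complex-analytic continuation in the $\ell$-variable will bridge the real tilts to imaginary ones.

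First I would fix $q\in\Z$ and $t\in\R^d$ and consider
\[
 \Phi_{q,t}(u)=\E[e^{u\ell}V^qe^{it\cdot Y}].
\]
The moment hypothesis $\E e^{a|\ell|}<\infty$ gives $|e^{u\ell}V^qe^{it\cdot Y}|\le e^{a'|\ell|}$ whenever $|\operatorname{Re}u|\le a'<a$. Dominated convergence, or Morera's theorem combined with Fubini, then shows that $\Phi_{q,t}$ extends holomorphically to the strip $\{|\operatorname{Re}u|<a\}$. Two laws with equal transforms for real $|u|<a$ give two holomorphic functions on this connected strip that agree on a real interval. By the identity theorem they agree on the whole strip, and in particular on the imaginary axis $u=is$, $s\in\R$. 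For the \emph{in particular} clause, I would observe that agreement on any real neighborhood $(-\varepsilon,\varepsilon)$ suffices. The common strip is then $|\operatorname{Re}u|<\min(a_1,a_2)$, where $a_1,a_2$ are the exponents for the two laws, and the real interval of agreement lies inside it.

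Second, at $u=is$ the transform becomes
\[
 \E[e^{is\ell}V^qe^{it\cdot Y}],\qquad (s,q,t)\in\R\times\Z\times\R^d.
\]
This is exactly the Fourier transform of the law at the character $(\ell,v,y)\mapsto e^{is\ell}v^qe^{it\cdot y}$, and these characters exhaust the dual group $\R\times\Z\times\R^d$. Uniqueness of Fourier--Stieltjes transforms for finite measures on locally compact abelian groups finishes the argument. If I want to stay elementary, I would instead note that trigonometric polynomials in the angle times $e^{i(s\ell+t\cdot y)}$ form a separating algebra. Together with tightness, the Stone--Weierstrass theorem on compacts then gives equality of the integrals of bounded continuous functions.

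The only step needing care is the holomorphic extension, specifically uniform domination on compact subsets of the strip. That domination follows directly from the bound $e^{u\ell}\le e^{a'|\ell|}$ for $|\operatorname{Re}u|\le a'$, so I expect no real obstacle.
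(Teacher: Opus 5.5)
Your proposal is correct and follows the paper's proof: holomorphic extension of each transform to the strip $|\operatorname{Re}u|<a$ by domination, the identity theorem to reach $u=is$, and then Fourier uniqueness. The only difference is that you invoke uniqueness of Fourier--Stieltjes transforms on $\R\times\T\times\R^d$ directly. The paper instead unpacks this by hand: for each $q$ it identifies the complex measure on $\R^{1+d}$ obtained by weighting with $V^q$, and then uses density of trigonometric polynomials in $C(\T)$ and a monotone class argument.
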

\begin{proof}
For fixed $q,t$, the transform is analytic in the strip
$|\operatorname{Re}u|<a$. On a closed substrip, its derivatives are
dominated by integrable multiples of $e^{a|\ell|}$, using the positive
distance to the edge of the strip. For two laws, use the smaller of
their exponential-moment parameters. Equality on a real neighborhood
of zero and the identity theorem give equality throughout this strip,
hence at all imaginary arguments $u=is$.

For each $q$, define a finite complex measure on $\R^{1+d}$ by weighting
the joint law with $V^q$ and projecting onto $(\ell,Y)$. The equalities at
$u=is$ identify its Fourier transform for every $(s,t)$, and hence identify
this measure. For every Borel set in $\R^{1+d}$, the resulting measures
in the $V$ coordinate therefore have identical integrals against all
trigonometric polynomials. Such polynomials are dense in $C(\T)$.
Equality on rectangles followed by the monotone class theorem identifies
the full joint laws.
\end{proof}

\begin{proof}[Proof of Theorem~\ref{thm:transform}]
Existence and the full real domain $u>-1$ follow from
Proposition~\ref{prop:infinite-transform}. Let $F$ be any candidate in
the theorem, and fix $K$. Write
\[
 \ell=\log|F(0)|^2,\qquad V=\frac{F(0)}{|F(0)|},\qquad
 Y=(\operatorname{Re}L_k(F),\operatorname{Im}L_k(F))_{k=1}^K.
\]
The prescribed transform with $q=0$ and $\mathbf t=0$ gives
$\E e^{u\ell}=M_\alpha(u)$ for real $|u|<\varepsilon$.
Choose $0<a<\varepsilon$. Both $\E e^{a\ell}$ and
$\E e^{-a\ell}$ are finite, so
$\E e^{a|\ell|}<\infty$. The same is true for the corresponding
vector of $F_\alpha$.

Their mixed transforms agree on $|u|<\varepsilon$ for every $q$ and
$\mathbf t$. As $\mathbf t$ varies over $\C^K$, the real pairing
$\operatorname{Re}\sum\bar t_kL_k$ ranges over all real linear
functionals of $Y$. Lemma~\ref{lem:bilateral-uniqueness} therefore
identifies the joint law of $(\ell,V,Y)$ with that of $F_\alpha$.
The analytic continuation in this argument concerns the transforms of
the two probability laws; no analytic continuation of the externally
defined function $\Psi_{\alpha,u}^{(K)}$ is needed.

Since $F(0)=e^{\ell/2}V$, we have identified every finite basepoint--jet
vector. The recursion~\eqref{eq:taylor-recursion} reconstructs each
finite list of Taylor coefficients from such a vector. All finite Taylor
laws thus agree. Lemma~\ref{lem:analytic-coordinates} says that these
coordinates generate the Borel sigma-field of $\Hol$. Consequently
$F\overset d=F_\alpha$ as random analytic functions.
\end{proof}

\appendix
\section{Invariant holomorphic determinantal kernels}
\label{sec:invariant-kernels}\label{sec:class-statement}

Krishnapur's classification~\cite[Chapter~3, Theorem~3.0.5]{Krishnapur2006}
uses a radial Radon reference measure and the projection convention of
his Definition~3.0.2; see also~\cite[Remark~7]{Krishnapur2009}.
We record its extension to arbitrary positive Radon measures and positive
contractions. The rigidity argument is the same in substance: the first
correlation fixes invariant intensity, and the normalized two-point kernel
modulus determines the analytic kernel. Krishnapur differentiates in group
parameters in equations~(3.0.6)--(3.0.9); below we differentiate at the
diagonal to identify invariant curvature. His decisive correlation
argument does not require radiality. The additional conclusions recorded
here are the global holomorphic change of representation, regularity of
the reference measure, and the thinning parameter permitted by
contractivity. The projection point-process laws remain the family
classified by Krishnapur.

We use the normalization~\eqref{eq:model}, writing $B_\alpha=K_\alpha$:
\begin{equation}\label{eq:class-model}
 d\mu_\alpha(z)=\frac{\alpha}{\pi}(1-|z|^2)^{\alpha-1}\dm(z),
 \qquad B_\alpha(z,w)=(1-z\bar w)^{-\alpha-1},\qquad \alpha>0.
\end{equation}
The power uses the analytic logarithm that vanishes at $z\bar w=0$.
The integral operator with kernel $B_\alpha$ on $L^2(\mu_\alpha)$ is
the orthogonal projection $P_\alpha$ onto $A^2(\mu_\alpha)$.
Denote its determinantal law by $H_\alpha$. For $0<p\le1$, independent
$p$-thinning retains each point with probability $p$, independently conditional
on the configuration. Its law $\Thin_p(H_\alpha)$ has kernel $pB_\alpha$
relative to $\mu_\alpha$.

An \emph{admissible holomorphic determinantal representation} consists of a
positive Radon measure $\mu$ on $\D$ and a finite-valued kernel $K$ on
$\D\times\D$ with these properties:
\begin{enumerate}[label=\textup{(\roman*)}]
\item $K$ is holomorphic in its first variable, antiholomorphic in its
second variable, Hermitian, and positive semidefinite as a kernel;
\item its integral operator $Q$ on $L^2(\mu)$ is a locally trace-class
positive contraction, $0\le Q\le I$;
\item the point process $X$ has factorial moment measures
\begin{equation}\label{eq:class-factmom}
 d\mathcal F_n(z_1,\ldots,z_n)
 =\det[K(z_i,z_j)]_{i,j=1}^n\prod_{i=1}^n d\mu(z_i).
\end{equation}
\end{enumerate}
Thus the diagonal in the first correlation formula is the specified
continuous analytic representative. Integral formulas for $Q$ are understood
initially on compactly supported $L^2$ functions. Local finiteness of the
first and second moment measures follows from continuity of $K$, the Radon
property, and $0\le\det[K(z_i,z_j)]_{i,j=1}^2\le K(z_1,z_1)K(z_2,z_2)$.
We exclude the almost surely empty process.

\begin{proposition}[Classification, including thinning]\label{thm:class-main}\label{prop:invariant-kernels}
Let $X$ have an admissible holomorphic determinantal representation $(K,\mu)$
and satisfy $\mathbb P(X\ne\varnothing)>0$. The following conditions are
equivalent:
\begin{enumerate}[label=\textup{(\alph*)}]
\item $\phi X\overset d=X$ for every $\phi\in\Aut(\D)$;
\item both $\mathcal F_1$ and $\mathcal F_2$ are invariant under the
diagonal actions of $\Aut(\D)$;
\item there are $\alpha>0$, $0<p\le1$, and a nowhere-zero
$b\in\Hol$ such that, as measures and pointwise as kernels,
\begin{equation}\label{eq:class-normalform}
 d\mu=|b|^{-2}d\mu_\alpha,
 \qquad K(z,w)=p\,b(z)\overline{b(w)}B_\alpha(z,w).
\end{equation}
\end{enumerate}
Under these conditions,
\begin{equation}\label{eq:class-law}
 \Law(X)=\Thin_p(H_\alpha),
\end{equation}
and $(\alpha,p)$ is determined uniquely by the point-process law. For a
fixed representation $(K,\mu)$, the function $b$ in
\eqref{eq:class-normalform} is unique up to a constant of modulus one.
Moreover, $Q$ is unitarily equivalent, by multiplication by $1/b$, to
$pP_\alpha$. In particular, $Q$ is a projection if and only if $p=1$.
\end{proposition}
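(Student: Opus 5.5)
The plan is to prove the cycle (c)$\Rightarrow$(a)$\Rightarrow$(b)$\Rightarrow$(c), and then read off the law, the uniqueness of $(\alpha,p)$ and $b$, and the unitary equivalence from the normal form \eqref{eq:class-normalform}.

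For (c)$\Rightarrow$(a), the gauge factors cancel in every correlation determinant: $\det[b(z_i)\overline{b(z_j)}B_\alpha(z_i,z_j)]\prod|b(z_i)|^{-2}=\det[B_\alpha(z_i,z_j)]$. Hence the factorial moment measures of $X$ coincide with those of the kernel $pB_\alpha$ relative to $\mu_\alpha$. That kernel is the kernel of $\Thin_p(H_\alpha)$, since $p$-thinning multiplies the $n$th correlation by $p^n$. The local Laplace expansion \eqref{eq:laplace-functional} with the Hadamard bound then gives \eqref{eq:class-law}. Invariance follows from \eqref{eq:kernel-transformation}, exactly as in Lemma~\ref{lem:green-transport}: the pullback changes $pB_\alpha$ only by a unimodular gauge $v_\phi(z)\overline{v_\phi(w)}$, and thinning commutes with $\phi$. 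The implication (a)$\Rightarrow$(b) is immediate.

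The substance is (b)$\Rightarrow$(c).

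\emph{Step 1 (intensity).} The measure $\mathcal F_1=K(z,z)\,d\mu$ is an $\Aut(\D)$-invariant Radon measure on $\D$. By transitivity and uniqueness of invariant measure on $\Aut(\D)/\T$, it equals $\lambda\,dA_{\mathrm h}$ for some constant $\lambda\ge0$. Nonemptiness forces $\lambda>0$. Hence $K(z,z)>0$ $\mu$-a.e., and $\mu=\lambda(K(z,z))^{-1}dA_{\mathrm h}$ on $\{K(z,z)>0\}$.

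\emph{Step 2 (positivity of the diagonal).} I must exclude zeros of $K(z,z)$. At such a point the row $K(z,\cdot)$ vanishes by positive semidefiniteness. I would show that the zero set is a discrete set $E$ of common zeros of the analytic functions $K(\cdot,w)$, unless $K\equiv0$. On $E$ the measure $\mu$ might carry atoms. However, $\mathcal F_1$ charges no point, and so neither does $\mathcal F_2$. Therefore such atoms do not affect the determinantal structure, and after Step 4 they are absorbed by choosing $b$ nonvanishing. I expect this to need care: the final statement asserts $b$ nowhere zero and an equality of measures, so I will have to show that $\mu(E)=0$. I would try to prove this from the contraction property, using that $Q$ acts on $L^2(\mu)$ and vanishes on functions supported on $E$, together with the requirement that the pointwise kernel identity hold everywhere.

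\emph{Step 3 (two-point invariance).} Since $\mathcal F_2=(K(z,z)K(w,w)-|K(z,w)|^2)\,d\mu\,d\mu$, dividing by $\mathcal F_1\otimes\mathcal F_1$ shows that
\[
 r(z,w)=\frac{|K(z,w)|^2}{K(z,z)K(w,w)}
\]
is invariant under the diagonal action $dA_{\mathrm h}^{\otimes2}$-a.e. Because $r$ is continuous off $E$, it is invariant everywhere, and double transitivity on pseudohyperbolic spheres gives $r=f(\delta(z,w))$. Expanding $1-r(z,w)=\partial\bar\partial\log K(z,z)\,|z-w|^2+o(|z-w|^2)$ at the diagonal then shows that the curvature density $\partial\bar\partial\log K(z,z)\,(1-|z|^2)^2$ is a constant, which I write as $\alpha+1$. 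Consequently $\log K(z,z)+(\alpha+1)\log(1-|z|^2)$ is harmonic on the simply connected disk, say equal to $\log(p'|b(z)|^2)$ with $b$ holomorphic and zero-free. The two sesqui-holomorphic kernels $K$ and $p'b(z)\overline{b(w)}B_\alpha$ agree on the diagonal, so they coincide by polarization (the identity principle in $(z,\bar w)$).

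\emph{Step 4 (constants).} Step 1 now reads $d\mu=\lambda p'^{-1}|b|^{-2}(1-|z|^2)^{\alpha-1}dm/\pi$. Comparing with \eqref{eq:class-model} fixes $p=\lambda/\alpha$ after absorbing $p'$ into $b$. Multiplication by $1/b$ is unitary from $L^2(\mu)$ onto $L^2(\mu_\alpha)$ and conjugates $Q$ to $p$ times the operator with kernel $B_\alpha$. Finiteness, together with $0\le Q\le I$, forces $\int|K(z,w)|^2d\mu(w)\le K(z,z)$. This integral diverges when $\alpha\le0$ (where $-1<\alpha$ is forced by local integrability of $\mu$), so $\alpha>0$. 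Then the operator with kernel $B_\alpha$ is $P_\alpha$, and $0\le pP_\alpha\le I$ gives $0<p\le1$. It is a projection iff $p=1$.

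Uniqueness of $(\alpha,p)$ comes from the law: the intensity $p\alpha/\pi(1-|z|^2)^{-2}$ and the ratio $r=(1-\delta^2)^{\alpha+1}$ are determined by $\mathcal F_1$ and $\mathcal F_2$. For fixed $(K,\mu)$, two choices of $b$ have a holomorphic ratio of constant modulus, hence a unimodular constant ratio.

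The main obstacle I anticipate is Steps 2–3: making the a.e. invariance of $r$ pointwise, and handling possible zeros of $K(z,z)$ so that $b$ is genuinely nowhere zero and the measure identity holds globally.
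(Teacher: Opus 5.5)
Your overall architecture matches the paper's: an invariant intensity $\lambda A_{\mathrm h}$, the diagonal expansion of the normalized two-point ratio giving curvature $s(1-|z|^2)^{-2}$, a harmonic gauge plus polarization, and the column bound from $0\le Q\le I$ at $a=0$ forcing $s>1$ and $p=\lambda/\alpha\le1$. However, Step 2 is a genuine gap, and the route you propose for it aims at the wrong target. Showing $\mu(E)=0$ cannot suffice. If $E\ne\varnothing$, then $K(a,a)=0$ for $a\in E$, which is incompatible with $K(a,a)=p|b(a)|^2B_\alpha(a,a)$ for a nowhere-zero $b$, so nothing can be absorbed into $b$. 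Likewise, Step 3 asserts that $\log K(z,z)+(\alpha+1)\log(1-|z|^2)$ is harmonic on the whole disk, and this already presupposes $E=\varnothing$. Working only off $E$, you would obtain a holomorphic $b$ that may vanish on $E$.

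What you need is $E=\varnothing$, and this follows from the Radon property of $\mu$, not from contraction or from atoms. If $K(a,a)=0$, positivity gives $K(a,\cdot)\equiv0$ and $K(\cdot,a)\equiv0$. Hence the mixed Taylor expansion at $(a,a)$ has the form $\sum_{j,k\ge1}c_{jk}\zeta^j\bar\eta^k$, and $K(z,z)\le C|z-a|^2$ near $a$. Take a small punctured disk about $a$ that avoids the rest of the discrete set $E$. There Step 1 gives $d\mu=\lambda\,dA_{\mathrm h}/K(z,z)\ge c|z-a|^{-2}\,dm$, so $\mu(D(a,r))=\infty$, contradicting local finiteness. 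The danger is the infinite mass of $\mu$ near $E$, not mass carried on $E$. With this, $K(z,z)>0$ everywhere, $\mu$ has a smooth positive density, and your Steps 3--4 go through as in the paper.

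A smaller point: local integrability of $\mu$ inside $\D$ places no constraint on the boundary exponent, so it does not force $\alpha>-1$. The lower bound comes from $\kappa\ge0$, since $0\le r\le1$ and $r(z,z)=1$. In any case the parenthetical is unnecessary, because $\int_0^1t^{\alpha-1}\,dt=\infty$ for every $\alpha\le0$.
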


\begin{remark}[Meaning of the classification]\label{rem:class-scope}
This theorem concerns Hermitian positive-contraction DPPs admitting a
holomorphic kernel on the whole disk. It does not classify arbitrary invariant
DPPs, non-Hermitian representations, or all random analytic functions whose
zeros are invariant. It assumes neither a radial reference measure nor a
M\"obius transformation rule for $K$ or $\mu$ separately. Those features of
the representation are resolved by the proof.
\end{remark}

\subsection*{Proof of the classification}\label{sec:class-proof}

We use the invariant area measure
\begin{equation}\label{eq:class-area}
 dA_{\mathrm h}(z)=\frac{1}{\pi(1-|z|^2)^2}\dm(z).
\end{equation}
The transitive action of $\Aut(\D)$ identifies the disk with the homogeneous
space $\Aut(\D)/\mathbb T$. Uniqueness of invariant measure on this homogeneous
space implies that every nonzero invariant Radon measure on the disk is
$\lambda A_{\mathrm h}$ for a unique $0<\lambda<\infty$.

\begin{lemma}[The first intensity removes singular reference measures]
\label{lem:class-reference}
Under the assumptions of Proposition~\ref{thm:class-main}, invariance of
$\mathcal F_1$ alone implies
\begin{equation}\label{eq:class-intensity}
 K(z,z)\,d\mu(z)=\lambda\,dA_{\mathrm h}(z),\qquad \lambda>0.
\end{equation}
Furthermore, $K(z,z)>0$ everywhere and
\begin{equation}\label{eq:class-density}
 d\mu(z)=\frac{\lambda}{\pi(1-|z|^2)^2K(z,z)}\dm(z).
\end{equation}
In particular, $\mu$ has full support and a strictly positive smooth density.
\end{lemma}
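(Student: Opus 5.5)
The first intensity is the measure $\mathcal F_1$ with $d\mathcal F_1=K(z,z)\,d\mu(z)$. I will show that it is a nonzero invariant Radon measure, apply uniqueness of invariant measure on $\Aut(\D)/\T$, and then divide by $K(z,z)$ after checking that this function never vanishes.

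\emph{Step 1: $\mathcal F_1$ is a nonzero invariant Radon measure.} It is Radon because $K$ is continuous (it is sesqui-holomorphic) and $\mu$ is Radon. It is nonzero because $\Prob(X\ne\varnothing)>0$ forces $\E X(\D)>0$. By hypothesis it is invariant under $\Aut(\D)$. Uniqueness of invariant measure then gives $\mathcal F_1=\lambda A_{\mathrm h}$ for a unique $0<\lambda<\infty$, which is \eqref{eq:class-intensity}.

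\emph{Step 2: $K(z,z)>0$ everywhere.} This is the step I expect to be the main obstacle. The function $z\mapsto K(z,z)$ is the diagonal of a positive semidefinite sesqui-holomorphic kernel, so $K(z,z)=\sum_j|f_j(z)|^2$ for holomorphic $f_j$ coming from the reproducing kernel space of $K$. Its zero set $E=\{K(z,z)=0\}$ is therefore the common zero set of all functions in that space.

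I will first show $K\not\equiv0$. If $K\equiv0$, then $\mathcal F_1=0$, contradicting Step 1.

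Next I will show $E$ is discrete. Suppose $z_0\in E$. Positive semidefiniteness gives $|K(z_0,w)|^2\le K(z_0,z_0)K(w,w)=0$, so $K(z_0,\cdot)\equiv0$. Every $f_j$ vanishes at $z_0$, and since not all $f_j$ are identically zero, $E$ is the common zero set of a nonzero family of holomorphic functions. Hence $E$ is discrete and closed, and in particular $m(E)=0$.

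To exclude $E\ne\varnothing$, I will combine the intensity identity with a positivity argument. I will not rely on the idea that $\mu$ could carry mass at points of $E$ without being seen by $\mathcal F_1$, because such mass is invisible. Instead I will use local behaviour near a point of $E$. On $\D\setminus E$, the identity $K\,d\mu=\lambda\,dA_{\mathrm h}$ shows that $\mu$ has the density in \eqref{eq:class-density}. Near $z_0\in E$ we have $K(z,z)\le C|z-z_0|^2$, so the density of $\mu$ is at least $c|z-z_0|^{-2}$, which is not locally integrable. This contradicts the Radon property of $\mu$. Therefore $E=\varnothing$.

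\emph{Step 3: $\mu$ on all of $\D$.} With $K>0$ on the diagonal, the function $1/K(z,z)$ is continuous. Dividing the identity of measures $K\,d\mu=\lambda\,dA_{\mathrm h}$ by $K(z,z)$ gives \eqref{eq:class-density} on the whole disk; no singular part of $\mu$ survives, since $K>0$ everywhere. Finally, $K(z,z)$ is real-analytic and positive, so $\mu$ has a strictly positive smooth density and in particular full support.
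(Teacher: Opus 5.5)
Your proposal is correct and follows essentially the same route as the paper. It identifies $\mathcal F_1=\lambda A_{\mathrm h}$ by uniqueness of invariant measure, shows the diagonal zero set is discrete via positivity ($K(z_0,\cdot)\equiv0$), and rules out any zero by the non-integrable lower bound $c|z-z_0|^{-2}$ on the density of $\mu$ near it. The one step you leave implicit is $K(z,z)\le C|z-z_0|^2$. It follows as in the paper from the mixed expansion $K(z_0+\zeta,z_0+\eta)=\sum_{j,k\ge1}c_{jk}\zeta^j\bar\eta^k$, or simply because a nonnegative $C^2$ function with an interior zero vanishes to second order there.
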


\begin{proof}
The first intensity is a Radon measure. It is nonzero: if it vanished on
every member of a countable compact exhaustion, the process would be empty
almost surely. Its invariance proves \eqref{eq:class-intensity}.

Let $S=\{a:K(a,a)=0\}$. Kernel positivity gives
$|K(z,w)|^2\le K(z,z)K(w,w)$, so $K(a,w)=0$ for all $w$ whenever $a\in S$.
Choose $w_0$ such that $K(\cdot,w_0)$ is not identically zero. Then $S$ is
contained in the discrete zero set of this holomorphic function.
Suppose $a\in S$. The identities $K(a,w)=K(z,a)=0$ and the local
holomorphic-antiholomorphic power series show that
\[
 K(a+\zeta,a+\eta)=\sum_{j,k\ge1}c_{jk}\zeta^j\bar\eta^k.
\]
Consequently $K(z,z)\le C|z-a|^2$ on a sufficiently small disk about $a$.
Shrink the disk so that it contains no other point of $S$ and has compact
closure in $\D$. On its punctured version, \eqref{eq:class-intensity} permits
division by $K(z,z)$ and gives
\[
 \mu(D(a,r))\ \ge\ c\int_{0<|z-a|<r}|z-a|^{-2}\dm(z)=\infty.
\]
This contradicts local finiteness of $\mu$. Hence $S=\varnothing$.
Division in \eqref{eq:class-intensity} now proves \eqref{eq:class-density}.
\end{proof}

\begin{lemma}
\label{lem:class-curvature}
If both $\mathcal F_1$ and $\mathcal F_2$ are invariant, then there are
$s\ge0$ and a nowhere-zero $h\in\Hol$ such that
\begin{equation}\label{eq:class-hform}
 K(z,w)=h(z)\overline{h(w)}(1-z\bar w)^{-s}.
\end{equation}
\end{lemma}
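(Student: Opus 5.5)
Since $\mathcal F_1$ and $\mathcal F_2$ are invariant, we first extract an invariant function. By Lemma~\ref{lem:class-reference}, $K(z,z)>0$ and $d\mu=\lambda\,dA_{\mathrm h}/K(z,z)$. Dividing the second factorial moment density by the product of first intensities gives, off the diagonal,
\[
 \frac{\det[K(z_i,z_j)]_{i,j=1}^2}{K(z_1,z_1)K(z_2,z_2)}
 =1-\frac{|K(z_1,z_2)|^2}{K(z_1,z_1)K(z_2,z_2)}.
\]
Invariance of $\mathcal F_2$ and of $\mathcal F_1=\lambda A_{\mathrm h}$ makes this ratio a Radon--Nikodym density of an invariant measure with respect to $A_{\mathrm h}\otimes A_{\mathrm h}$. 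Hence $\kappa(z,w)=|K(z,w)|^2/(K(z,z)K(w,w))$ is almost everywhere invariant under the diagonal action. By continuity it is exactly invariant. Every orbit of the diagonal action on $\D\times\D$ contains a pair $(0,w)$, and rotations fix $0$. Therefore $\kappa(z,w)=g(\delta(z,w))$ for a continuous function $g$ of the pseudohyperbolic distance, with $g(0)=1$.

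Next we identify the invariant curvature by differentiating at the diagonal. Write $k(z)=\log K(z,z)$, which is smooth. Near the diagonal, the sesqui-holomorphic expansion gives
\[
 \log\kappa(z,w)=-\partial\bar\partial k(z)\,|w-z|^2+O(|w-z|^3).
\]
Also $\delta(z,w)^2=|w-z|^2/(1-|z|^2)^2+O(|w-z|^3)$. Comparing the two along rays from $z$, the function $g(t)$ has a quadratic expansion $1-s\,t^2+o(t^2)$, and
\[
 \partial\bar\partial k(z)=\frac{s}{(1-|z|^2)^2}
\]
for a constant $s$. The constant $s$ must be $\ge0$ because $\kappa\le1$ by Cauchy--Schwarz for the positive kernel. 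To justify that $g$ has this expansion, I would argue as follows. At $z=0$, $\log\kappa(0,w)$ is smooth in $w$ with Hessian given by $\partial\bar\partial k(0)$, and radiality in $w$ forces the form $-s|w|^2+O(|w|^3)$. Transitivity then transports this to every $z$.

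We then integrate the curvature. The function $k(z)-s\log\frac1{1-|z|^2}$ is harmonic on the simply connected disk, so it equals $2\operatorname{Re}\log h$ for some nowhere-zero $h\in\Hol$. Hence $K(z,z)=|h(z)|^2(1-|z|^2)^{-s}$. Both $K(z,w)$ and $h(z)\overline{h(w)}(1-z\bar w)^{-s}$ are holomorphic in $z$ and antiholomorphic in $w$, and they agree on the diagonal. By the polarization principle used in Corollary~\ref{cor:point-laws} (expand in $z^m\bar w^n$ and continue analytically), they agree everywhere, proving~\eqref{eq:class-hform}.

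The main obstacle is the second step. It requires rigorously turning the invariance of the two-point ratio into constancy of $(1-|z|^2)^2\partial\bar\partial\log K(z,z)$. The cleanest route should be to work at a single basepoint and use the group to move, avoiding any regularity assumption on $g$ beyond what $\kappa(0,\cdot)$ supplies.
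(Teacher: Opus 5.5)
Your proposal is correct and takes essentially the paper's route. You show the normalized two-point ratio is exactly invariant, read off $\partial\bar\partial\log K(z,z)$ from its second-order expansion at the diagonal, and transport that expansion from the origin by an automorphism to get $s/(1-|z|^2)^2$, with $s\ge0$ from the ratio being at most $1$; the harmonic gauge $h=e^g$ and polarization then finish exactly as in the paper. The intermediate reduction to a function $g$ of the pseudohyperbolic distance is harmless but not needed, since transporting the expansion at $0$ already yields the curvature identity.
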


\begin{proof}
By Lemma~\ref{lem:class-reference}, set
\[
 R(z,w)=\frac{|K(z,w)|^2}{K(z,z)K(w,w)}.
\]
The Radon--Nikodym derivative of $\mathcal F_2$ relative to
$\mathcal F_1\otimes\mathcal F_1$ is $1-R$. Invariance of these measures
gives, for each fixed $\phi\in\Aut(\D)$,
\begin{equation}\label{eq:class-R-invariance}
 R(\phi z,\phi w)=R(z,w).
\end{equation}
Initially this is an almost-everywhere identity. Both sides are continuous,
and the product intensity has full support, so it holds everywhere.

Put $k(z)=K(z,z)$ and $\kappa(z)=\partial_z\partial_{\bar z}\log k(z)$.
For fixed $z$, the kernel $K(z,w)$ is nonzero for $w$ near $z$. Its logarithmic
modulus is harmonic in that neighborhood. Therefore
\begin{equation}\label{eq:class-diagonal-derivative}
 -\left.\partial_w\partial_{\bar w}\log R(z,w)\right|_{w=z}
 =\kappa(z).
\end{equation}
Since $0\le R(z,w)\le1$ and $R(z,z)=1$, this also proves $\kappa(z)\ge0$.
Differentiate \eqref{eq:class-R-invariance} near its diagonal to obtain
\begin{equation}\label{eq:class-metric-invariance}
 \kappa(\phi z)|\phi'(z)|^2=\kappa(z).
\end{equation}
Choosing an automorphism that takes $z$ to zero yields
\begin{equation}\label{eq:class-metric}
 \kappa(z)=\frac{s}{(1-|z|^2)^2},\qquad s=\kappa(0)\ge0.
\end{equation}

The remaining curvature--gauge--polarization step has a direct
analogue in Sodin's Calabi-rigidity argument for Gaussian analytic
functions~\cite[Theorem~2]{Sodin2000}.
Because $\partial\bar\partial[-\log(1-|z|^2)]=(1-|z|^2)^{-2}$,
the real function
$u(z)=\log k(z)+s\log(1-|z|^2)$ is harmonic. Since $\D$ is simply connected, there is a holomorphic $g$ with $2\operatorname{Re}g=u$. Set $h=e^g$.
Then
\[
 \frac{K(z,z)}{|h(z)|^2}=(1-|z|^2)^{-s}.
\]
Both $K(z,w)/(h(z)\overline{h(w)})$ and $(1-z\bar w)^{-s}$ are
holomorphic-antiholomorphic and have this diagonal. Such a kernel is
determined by its diagonal: near any diagonal point its mixed power-series
coefficients are obtained by differentiating the diagonal in $z$ and $\bar z$;
uniqueness then extends throughout $\D\times\D$. This proves
\eqref{eq:class-hform}. No global logarithm of $K(z,w)$ was assumed.
\end{proof}

\begin{lemma}
\label{lem:class-column}
Let $L$ be a continuous Hermitian kernel on $\D\times\D$, relative to a
positive Radon measure $\eta$ of full support, and suppose its integral
operator $T$ is a positive contraction on $L^2(\eta)$. Then, for every $a\in\D$,
\begin{equation}\label{eq:class-column-bound}
 \int_{\D}|L(w,a)|^2\,d\eta(w)\le L(a,a).
\end{equation}
\end{lemma}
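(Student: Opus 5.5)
The natural route is to test the contraction $T$ against approximations of the column function $L(\cdot,a)$ and to extract the bound from positivity together with $\norm{T}\le1$. For $\varepsilon>0$, let $f_\varepsilon=\ind_{D(a,\varepsilon)}/\eta(D(a,\varepsilon))$. Full support of $\eta$ makes the denominator positive, and the Radon property makes $f_\varepsilon$ a compactly supported element of $L^2(\eta)$. Its image is
\[
 (Tf_\varepsilon)(w)=\frac{1}{\eta(D(a,\varepsilon))}\int_{D(a,\varepsilon)}L(w,v)\,d\eta(v),
\]
the average of the columns $L(w,\cdot)$ over the small disk.

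The key algebraic inequality is $T^2\le T$. Since $0\le T\le I$, the spectral theorem gives $\ip{T^2f}{f}\le\ip{Tf}{f}$, that is, $\norm{Tf}^2\le\ip{Tf}{f}$ for every $f\in L^2(\eta)$. Applied to $f_\varepsilon$, the right side equals
\[
 \frac{1}{\eta(D(a,\varepsilon))^2}\iint_{D(a,\varepsilon)^2}L(w,v)\,d\eta(v)\,d\eta(w).
\]
By continuity of $L$ at $(a,a)$, this double average tends to $L(a,a)$ as $\varepsilon\to0$.

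It remains to pass to the limit on the left side. Continuity of $L$ gives $Tf_\varepsilon(w)\to L(w,a)$ pointwise in $w$. Fatou's lemma then yields
\[
 \int|L(w,a)|^2\,d\eta(w)\le\liminf_{\varepsilon\to0}\norm{Tf_\varepsilon}^2\le L(a,a),
\]
which is \eqref{eq:class-column-bound}.

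The only delicate point, which I expect to be the main obstacle, is verifying that the integral operator on compactly supported functions really represents $T$. Once that holds, $\ip{Tf_\varepsilon}{f_\varepsilon}$ and $Tf_\varepsilon$ are given by the displayed integrals. This is the convention stated in the admissibility definition: integral formulas are taken on compactly supported $L^2$ functions. With that convention, $Tf_\varepsilon$ is computed pointwise, because $L$ is continuous and bounded on compacts, so Fatou's lemma applies directly. No a priori knowledge that $L(\cdot,a)\in L^2(\eta)$ is needed; the finiteness is itself a conclusion of the argument.
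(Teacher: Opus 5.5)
Your proposal is correct and follows the paper's proof essentially step for step: normalized indicators of shrinking disks, the bound $\norm{Tf}^2\le\ip{Tf}{f}$ from $T^2\le T$, convergence of the double average to $L(a,a)$, and Fatou's lemma applied to $Tf_\varepsilon(w)\to L(w,a)$. The differences are cosmetic. The paper works along a sequence $\epsilon_j\downarrow0$, which Fatou needs anyway, and with $\overline{D(a,\epsilon_j)}\Subset\D$, which you use implicitly for finiteness of $\eta(D(a,\varepsilon))$. It also states the pointwise convergence off a null set, whereas you obtain it everywhere from the continuous integral representative.
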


\begin{proof}
Let $\epsilon_j\downarrow0$ with $\overline{D(a,\epsilon_j)}\Subset\D$ and
put $f_j=\ind_{D(a,\epsilon_j)}/\eta(D(a,\epsilon_j))$.
These functions belong to $L^2(\eta)$. Since $T^2\le T$,
\[
 \norm{Tf_j}_2^2\le\ip{Tf_j}{f_j}
 =\frac{\displaystyle\int_{D(a,\epsilon_j)^2}L(x,y)
                      \,d\eta(x)\,d\eta(y)}
        {\eta(D(a,\epsilon_j))^2}
 \longrightarrow L(a,a).
\]
Use the integral representative of $Tf_j$. Outside one null set for this
countable sequence, continuity gives $Tf_j(w)\to L(w,a)$. Fatou's lemma
proves \eqref{eq:class-column-bound}. This establishes the pointwise
diagonal inequality without substituting a Dirac mass into an $L^2$ operator.
\end{proof}

\begin{proof}[Proof of Proposition~\ref{thm:class-main}]
The implication (a)$\Rightarrow$(b) is immediate. Assume (b), and use
Lemmas~\ref{lem:class-reference} and \ref{lem:class-curvature}.
Multiplication $Uf=f/h$ is a unitary map from $L^2(\mu)$ onto
$L^2(\widetilde\mu)$, where $d\widetilde\mu=|h|^2d\mu$. The transformed
operator $\widetilde Q=UQU^{-1}$ has kernel and measure
\begin{equation}\label{eq:class-transformed}
 \widetilde K(z,w)=(1-z\bar w)^{-s},\qquad
 d\widetilde\mu(z)=\frac{\lambda}{\pi}(1-|z|^2)^{s-2}\dm(z).
\end{equation}
It remains a positive contraction. Applying Lemma~\ref{lem:class-column}
at $a=0$, where $\widetilde K(w,0)=1$, gives
\begin{equation}\label{eq:class-spectral-bound}
 1\ge\widetilde\mu(\D)
 =2\lambda\int_0^1r(1-r^2)^{s-2}\,dr
 =\lambda\int_0^1t^{s-2}\,dt.
\end{equation}
The integral is finite exactly when $s>1$, in which case it equals
$\lambda/(s-1)$. Thus
\[
 \alpha=s-1>0,\qquad p=\frac{\lambda}{\alpha}\in(0,1].
\]
Equation~\eqref{eq:class-transformed} becomes
$\widetilde K=B_\alpha$, $\widetilde\mu=p\mu_\alpha$.
Set $b=h/\sqrt p$. Then \eqref{eq:class-normalform} holds, proving (c).

For completeness, the monomials satisfy
\[
 \norm{z^n}_{L^2(\mu_\alpha)}^2
 =\alpha B(n+1,\alpha)
 =\frac{\Gamma(\alpha+1)n!}{\Gamma(n+\alpha+1)}.
\]
They form a complete orthogonal system in $A^2(\mu_\alpha)$: angular
integration of an analytic Taylor series gives its norm as the sum of the
squared coefficients with these weights, and its partial sums converge in
that norm. Summing the normalized monomial kernels gives $B_\alpha$.
Multiplication $Vf=f/b$ maps $L^2(\mu)$ unitarily onto $L^2(\mu_\alpha)$,
and direct substitution in the integral gives
\begin{equation}\label{eq:class-unitary}
 VQV^{-1}=pP_\alpha.
\end{equation}
Since $P_\alpha\ne0$, the original operator is a projection exactly when
$p^2=p$, hence exactly when $p=1$.

Under (c), substitution into \eqref{eq:class-factmom} cancels all the
factors $b$ and shows that the factorial moment measures are those of the
DPP with kernel $pB_\alpha$ relative to $\mu_\alpha$. The standard uniqueness
of the DPP associated with a locally trace-class positive contraction
identifies its law. Equivalently, independent thinning multiplies the
$n$th factorial moment measure by $p^n$, which is exactly the same kernel
change. This proves \eqref{eq:class-law}.

The law $H_\alpha$ is invariant under disk automorphisms. One can verify
this directly from its kernel: for
$\phi(z)=e^{i\theta}(a-z)/(1-\bar a z)$ and
\[
 c(z)=(1-|a|^2)^{-(\alpha+1)/2}(1-\bar a z)^{\alpha+1},
\]
the kernel satisfies
$B_\alpha(\phi z,\phi w)=c(z)\overline{c(w)}B_\alpha(z,w)$,
whereas the density of $d\mu_\alpha(\phi z)$ after change of variables is
$|c(z)|^{-2}d\mu_\alpha(z)$. These factors cancel in every correlation
measure. The power in $c$ uses the analytic logarithm at zero; the identity
follows first at zero and then analytically, so the noninteger power causes
no branch ambiguity. Independent thinning commutes with deterministic
point transformations. This proves (c)$\Rightarrow$(a), including the
converse existence assertion for every $\alpha>0$ and $0<p\le1$.

Finally, the intensity of $\Thin_p(H_\alpha)$ is $p\alpha A_{\mathrm h}$,
and its normalized pair correlation is
\begin{equation}\label{eq:class-pair}
 \frac{d\mathcal F_2}{d(\mathcal F_1\otimes\mathcal F_1)}(z,w)
 =1-(1-\delta(z,w)^2)^{\alpha+1},\qquad
 \delta(z,w)=\left|\frac{z-w}{1-z\bar w}\right|.
\end{equation}
The value of this function at any $z\ne w$ determines $\alpha$, and the
intensity then determines $p$. The continuous versions make this conclusion
independent of null-set choices. For fixed $(K,\mu)$, the quotient of two
possible functions $b$ has constant modulus one by the kernel diagonal,
and is therefore a constant of modulus one. All uniqueness assertions follow.
\end{proof}

\begin{corollary}[Projection case without radiality]
\label{cor:class-projection}
A nonempty invariant DPP represented by an admissible holomorphic projection
kernel relative to an arbitrary positive Radon measure on $\D$ has law
$H_\alpha$ for a unique $\alpha>0$.
\end{corollary}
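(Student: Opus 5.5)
The corollary follows directly from Proposition~\ref{thm:class-main}, so the plan is to verify its hypotheses and read off the projection case. Let $X$ be a nonempty invariant DPP with an admissible holomorphic representation $(K,\mu)$ whose integral operator $Q$ is an orthogonal projection. A projection is a positive contraction, $0\le Q=Q^2=Q^*\le I$, so conditions (i)--(iii) of admissibility hold. The process is not almost surely empty, so $\mathbb P(X\ne\varnothing)>0$. Invariance is hypothesis (a) of the proposition. No radiality of $\mu$ is needed, since Lemma~\ref{lem:class-reference} already forces $\mu$ to have a smooth positive density.

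Next I apply (a)$\Rightarrow$(c). This gives $\alpha>0$, $0<p\le1$ and a nowhere-zero $b$ with \eqref{eq:class-normalform}. By \eqref{eq:class-unitary}, $Q$ is unitarily equivalent to $pP_\alpha$. Because $Q$ is a projection and $P_\alpha\ne0$, we get $p^2=p$, hence $p=1$; this is exactly the last assertion of the proposition. Then \eqref{eq:class-law} yields $\Law(X)=\Thin_1(H_\alpha)=H_\alpha$.

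For uniqueness of $\alpha$, the proposition states that $(\alpha,p)$ is determined by the point-process law. Concretely, the normalized pair correlation \eqref{eq:class-pair} at any single pair $z\ne w$ already fixes $\alpha$, so $H_\alpha=H_{\alpha'}$ forces $\alpha=\alpha'$.

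There is essentially no obstacle here. The only point needing care is the reading of ``projection kernel'': it must mean that the integral operator on $L^2(\mu)$ is an orthogonal projection, so that admissibility (ii) and the unitary-equivalence argument apply to the same operator.
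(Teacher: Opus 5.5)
Your proposal is correct and is the paper's argument: the corollary is stated without separate proof because it is exactly the $p=1$ case of Proposition~\ref{thm:class-main}, where the unitary equivalence \eqref{eq:class-unitary} forces $p=1$ for a projection and $\alpha$ is read off from the law via \eqref{eq:class-pair}. One small wording point: being a projection supplies only the contraction part of condition (ii); local trace class and conditions (i) and (iii) come from the word ``admissible'' in the hypothesis.
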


\begin{corollary}
\label{cor:class-two-moments}
Within the admissible holomorphic class, invariance of the first two
factorial moment measures implies invariance of the full law. These two
measures then determine that law.
\end{corollary}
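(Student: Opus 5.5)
The statement is Corollary~\ref{cor:class-two-moments}, and I would deduce it directly from Proposition~\ref{thm:class-main}. There are two claims to handle, in this order.

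\emph{First claim: two invariant moment measures give an invariant law.} Let $X$ have an admissible holomorphic representation $(K,\mu)$. Suppose $\mathcal F_1$ and $\mathcal F_2$ are invariant under the diagonal actions of $\Aut(\D)$. This is exactly condition (b) of Proposition~\ref{thm:class-main}, once the degenerate case is set aside.

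\begin{itemize}
\item If $X$ is almost surely empty, the law is the point mass at the empty configuration. This law is trivially invariant and is determined by $\mathcal F_1=0$. By the paper's convention, this process is excluded from the class anyway.
\item Otherwise, $\Prob(X\ne\varnothing)>0$. Then (b)$\Rightarrow$(c)$\Rightarrow$(a) in Proposition~\ref{thm:class-main} shows that $\phi X\overset d=X$ for every $\phi$.
\end{itemize}

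The content here is that the chain runs (b)$\Rightarrow$(c)$\Rightarrow$(a). Step (b)$\Rightarrow$(c) uses only the first two factorial moments, through Lemmas~\ref{lem:class-reference} and~\ref{lem:class-curvature}. Step (c)$\Rightarrow$(a) then yields the invariance of every correlation measure.

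\emph{Second claim: the two measures determine the law.} Under (b), equation \eqref{eq:class-law} gives $\Law(X)=\Thin_p(H_\alpha)$. So it suffices to show that $(\alpha,p)$ can be read off from $\mathcal F_1$ and $\mathcal F_2$ alone, without knowing the representation.

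\begin{itemize}
\item By Lemma~\ref{lem:class-reference}, $\mathcal F_1=\lambda A_{\mathrm h}$. In the normal form, $\lambda=p\alpha$, the intensity of $\Thin_p(H_\alpha)$.
\item The Radon--Nikodym derivative $d\mathcal F_2/d(\mathcal F_1\otimes\mathcal F_1)$ has the continuous version \eqref{eq:class-pair}, namely $1-(1-\delta^2)^{\alpha+1}$.
\item For fixed $\delta\in(0,1)$, this expression is strictly increasing in $\alpha$. Hence it determines $\alpha$, and then $p=\lambda/\alpha$.
\end{itemize}

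One subtlety is that the density is defined only almost everywhere. This is resolved as in the proof of Lemma~\ref{lem:class-curvature}. The product intensity has full support, so two continuous densities that agree almost everywhere agree everywhere, and the value at any pair $z\ne w$ is well defined.

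I expect no real obstacle in either claim. The only points needing care are the empty-process convention and the almost-everywhere versus continuous-version issue just described.
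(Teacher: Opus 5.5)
Your proposal is correct and follows essentially the paper's route: the corollary is read off from Proposition~\ref{thm:class-main}, with (b)$\Rightarrow$(c)$\Rightarrow$(a) giving invariance of the full law. The intensity $p\alpha A_{\mathrm h}$ and the continuous pair correlation \eqref{eq:class-pair} then fix $(\alpha,p)$, exactly as in the final paragraph of the proposition's proof.
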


\begin{remark}[Why radiality is a condition on a representation]
For example,
\[
 d\mu(z)=e^{-2\operatorname{Re}z}\,d\mu_\alpha(z),\qquad
 K(z,w)=p e^{z+\bar w}B_\alpha(z,w)
\]
is a nonradial representation of $\Thin_p(H_\alpha)$. Formula
\eqref{eq:class-normalform} proves that every representation covered by the
theorem differs from the radial one in this prescribed way. Removal of
radiality enlarges the permitted representations; it does not introduce
additional projection point-process laws.
\end{remark}

\section{A direct Green-average normalization argument}
\label{sec:direct-normalization}

For the normalized conclusion in Theorem~\ref{thm:geometric}(iii),
independence of the scale can be replaced by the following averaging
argument. Suppose $F$ has divisor law $H_\alpha$, the covariance in
\eqref{eq:main-covariance}, and the two logarithmic moments in
\eqref{eq:geometric-normalization}. The harmonic comparison in
Section~\ref{subsec:independent-scale-proof}, up to
\eqref{eq:random-scale}, gives a spatial constant $B\in L^2$.
At this point $B$ may depend on the zeros. We show directly that $B=0$.

For each fixed $z$, covariance implies
\[
 \log|F(z)|-I_\alpha(z)\overset d=\log|F(0)|.
\]
The normalization~\eqref{eq:geometric-normalization} identifies its
first two moments with those of
$\log|F_\alpha(0)|=c_\alpha-T_0(\Xi)$.
By Lemma~\ref{lem:green-transport}, $T_z$ has the same centered law as
$T_0$. On the other hand,~\eqref{eq:canonical-modulus}
and~\eqref{eq:random-scale} give
\[
 \log|F(z)|-I_\alpha(z)=c_\alpha-T_z(\Xi)+B.
\]
Comparing the first moments yields $\E B=0$. Comparing the second moments
then gives
\begin{equation}\label{eq:scale-identity}
 \E B^2=2\E[B T_z(\Xi)]\qquad(z\in\D\text{ deterministic}).
\end{equation}
The correlation on the right must be retained.

Average the Green symbol in its center. The angular logarithmic mean
formula gives, for $0<r<1$,
\begin{equation}\label{eq:circle-symbol}
 \bar q_r(w):=\frac1{2\pi}\int_0^{2\pi}q_{re^{it}}(w)\,dt
             =-\log\max\{r,|w|\}.
\end{equation}
Except for an irrelevant value at $w=0$,
$0\le\bar q_r(w)\le q_0(w)$, and $\bar q_r(w)\to0$ as $r\uparrow1$.
Since $q_0\in L^2(\nu_\alpha)$, dominated convergence yields
\begin{equation}\label{eq:circle-symbol-small}
 \norm{\bar q_r}_{L^2(\nu_\alpha)}\longrightarrow0.
\end{equation}
Lemma~\ref{lem:green-estimates} makes $t\mapsto q_{re^{it}}$ continuous
in $L^2(\nu_\alpha)$, so its Bochner integral is well defined.
The continuous linear map $\X_\Xi$ commutes with that integral. Thus
\begin{equation}\label{eq:circle-statistic}
 \bar T_r:=\frac1{2\pi}\int_0^{2\pi}T_{re^{it}}(\Xi)dt
             =\X_\Xi(\bar q_r)\longrightarrow0\quad\text{in }L^2.
\end{equation}
The integral of the fixed canonical version agrees with this $L^2$
integral: the versions agree for every deterministic $t$, and joint
Borel measurability and Fubini give their equality for almost every $t$
on almost every path. Their square-integrability on the circle follows
from the uniform Green bound.

We  now integrate~\eqref{eq:scale-identity}; the mixed terms are
integrable by Cauchy--Schwarz. We obtain
\[
 0\le\E B^2=2\E(B\bar T_r)
          \le2\norm B_2\norm{\bar T_r}_2\longrightarrow0.
\]
Consequently $B=0$. This conclusion allows arbitrary dependence between
the scale and the zero configuration.

The candidate is now $U A_\alpha(\Xi)$. Lemma~\ref{lem:forced-phase}
makes $U$ Haar and independent of $\Xi$, so the sampling identity
\eqref{eq:sampling} proves the normalized uniqueness conclusion.

\section*{Acknowledgments}

\noindent 
\textit{Funding.}
X. F. was supported by the National Science and Technology
Council of Taiwan (Grant No.~114-2115-M-A49-003-MY3).
S. H. was supported by the National Natural Science Foundation
of China (Grant No.~12371133).
Q. Z. was supported by the National Natural Science Foundation
of China (Grant No.~12501162) and the Natural Science Foundation of Jiangsu
Province (Grant No.~BK20250832).

\bigskip

\noindent 
\textit{AI Statement.}
The authors used artificial-intelligence tools to assist with exposition,
\LaTeX\ preparation, and the development and checking of some local
arguments. Responsibility for the mathematical content rests with the
authors.

\end{document}